\long\def\delete#1{}
\definecolor{Blue}{rgb}{0,0,1}
\definecolor{Red}{rgb}{1,0,0}
\definecolor{DarkGreen}{rgb}{0,0.6,0}
\definecolor{DarkYellow}{rgb}{1,1,0.2}
\definecolor{DarkPurple}{rgb}{.6,0,1}
\def\ma{\mathcal{A}}
\def\mb{\mathcal{B}}
\def\mc{\mathcal{C}}
\def\md{\mathcal{D}}
\def\mf{\mathcal{F}}
\def\mg{\mathcal{G}}
\def\mh{\mathcal{H}}
\def\mi{\mathcal{I}}
\def\mm{\mathcal{M}}
\def\mt{\mathcal{T}}
\def\bs{\setminus}
\numberwithin{equation}{section}
\numberwithin{figure}{section}
\newtheorem{cl}{Claim}
\newtheorem{as}{Assumption}
\newtheorem{thm}{Theorem}[section]
\newtheorem{ex}[thm]{Example}
\newtheorem{lem}[thm]{Lemma}
\newtheorem{cor}[thm]{Corollary}
\begin{document}

\setcounter{page}{1}
\renewcommand{\thefootnote}{}
\newcommand{\remark}{\vspace{2ex}\noindent{\bf Remark.\quad}}
\renewcommand{\abovewithdelims}[2]{%
	\genfrac{[}{]}{0pt}{}{#1}{#2}}

%-------------------  First Head  -----------------------------------------

\def\qed{\hfill$\Box$\vspace{11pt}}

\title {\bf $s$-almost cross-$t$-intersecting families for finite sets}

\author{Dehai Liu\thanks{E-mail: \texttt{liudehai@mail.bnu.edu.cn}}\   \textsuperscript{a}}
\author{Kaishun Wang\thanks{ E-mail: \texttt{wangks@bnu.edu.cn}}\ \textsuperscript{a}}
\author{Tian Yao\thanks{Corresponding author. E-mail: \texttt{tyao@hist.edu.cn}}\ \textsuperscript{b}}
\affil{ \textsuperscript{a} Laboratory of Mathematics and Complex Systems (Ministry of Education), School of
	Mathematical Sciences, Beijing Normal University, Beijing 100875, China}

\affil{ \textsuperscript{b} School of Mathematical Sciences, Henan Institute of Science and Technology, Xinxiang 453003, China}
\date{}

\openup 0.5\jot
\maketitle

\begin{abstract}
	
  Two  families  $\mathcal{F}$ and $\mathcal{G}$   of $k$-subsets of an $n$-set   are called $s$-almost cross-$t$-intersecting if  each member in $\mathcal{F}$ (resp. $\mathcal{G}$) is $t$-disjoint with at most $s$ members in $\mathcal{G}$ (resp. $\mathcal{F}$). In this paper, we  characterize  the  $s$-almost cross-$t$-intersecting families with the maximum product of their sizes. Furthermore, we provide  a  corresponding stability result after studying  the $s$-almost cross-$t$-intersecting families which are not cross‑$t$-intersecting.

	\vspace{2mm}
	\noindent{\bf Key words:}\  Extremal set theory;\ cross-$t$-intersecting family;\ $s$-almost cross-$t$-intersecting family
	
	\
	
	\noindent{\bf AMS classification:} \   05D05
	
\end{abstract}

\section{Introduction}

Let $n$, $k$,  $t$  and $s$ be positive integers with $n\geq  k\geq t$.  Write $[n]=\left\{1,2,\ldots,n\right\}$.  For a set $X$, denote the family of all $k$-subsets of $X$   by $\binom{X}{k}$. Two sets $F$ and $G$ are said to be  \textit{$t$-intersecting}  if $\left|F\cap G\right|\geq t$, and \textit{$t$-disjoint} otherwise.

A family $\mf\subseteq \binom{[n]}{k}$ is called \textit{$t$-intersecting} if any two members in $\mf$ are $t$-intersecting.  The  structure of  maximum-sized $t$-intersecting families  was completely 
determined by the  Erd\H{o}s-Ko-Rado theorem \cite{2406295,MR0140419, MR0519277, MR0771733},  and  a stability result of this theorem  was  known as the  Hilton-Milner theorem \cite{2504097, 2504095,2504096,2407162}.  Analogously, two families $\mf,\mg\subseteq \binom{[n]}{k}$ are  said to be \textit{cross-$t$-intersecting} if  any member in $\mathcal{F}$ and  any member in $\mathcal{G}$ are $t$-intersecting. There is a vast literature on studying the   maximum product of  sizes of cross-$t$-intersecting families. We  refer the readers to  \cite{2504098, 25040910, 25040911,2506256,25040912} for details.

These problems have attracted widespread attention.   Several generalizations have been made by relaxing the intersection property  in various ways.   One extension of $t$-intersecting families is to consider a family $\mf\subseteq \binom{[n]}{k}$ where
each member in $\mf$ is  $t$-disjoint with at most $s$ members in $\mf$. Such a family is said to be 
\textit{$s$-almost $t$-intersecting}, which has been studied in \cite{2410232,2406096,2406095,2502161,2412121},  and  its $q$-analog can be found in \cite{2506253,2506252}.
Similarly,  one may locally weaken the  intersection property between two families.   
We say that two  families $\mf,\mg\subseteq \binom{[n]}{k}$  are  \textit{$s$-almost cross-$t$-intersecting} if each member in $\mathcal{F}$ (resp. $\mathcal{G}$) is $t$-disjoint with at most $s$ members in $\mathcal{G}$ (resp. $\mathcal{F}$).

Let $\mf$ and $\mg$ be $s$-almost cross-$t$-intersecting families.  In \cite{25040913}, Gerbner et al. pointed out  the maximum value of $\left|\mf\right|\left|\mg\right|$ for $t=1$.  Our first result characterizes  families $\mf$ and $\mg$ with the maximum $\left|\mf\right|\left|\mg\right|$ for $t\geq 1$.

\begin{thm}\label{1}
	Let $n$, $k$, $t$ and $s$ be positive integers with $k\geq t+1$ and $n\geq (t+1)(2(k-t+1)^{2}+7s)$. Suppose that $\mf,\mg\subseteq \binom{[n]}{k}$ are $s$-almost cross-$t$-intersecting families. If $\left|\mf\right|\left|\mg\right|$ is maximum, then there exists $W\in\binom{[n]}{t}$ such that $\mf=\mg=\left\{ H\in\binom{[n]}{k}: W\subseteq H\right\}$ .
\end{thm}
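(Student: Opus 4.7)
The plan is to compare $|\mf||\mg|$ with the product attained by the trivial construction. Taking $\mf=\mg=\{H\in\binom{[n]}{k}: W\subseteq H\}$ for any $W\in\binom{[n]}{t}$ yields a pair that is cross-$t$-intersecting (hence $s$-almost cross-$t$-intersecting for every $s\ge 0$) with $|\mf||\mg|=\binom{n-t}{k-t}^2$, so the maximum is at least this value. It suffices to prove the matching upper bound together with the claimed uniqueness.

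I would split into two cases according to whether $(\mf,\mg)$ is genuinely cross-$t$-intersecting. In the cross-$t$-intersecting case, the hypothesis $n\ge (t+1)(2(k-t+1)^2+7s)$ comfortably exceeds the usual large-$n$ requirement of the classical Frankl--Tokushige/Wang--Zhang theorem for maximum products of cross-$t$-intersecting $k$-uniform families; that theorem then gives that the unique optimizer is the trivial $t$-star $\mf=\mg=\{H: W\subseteq H\}$ with product exactly $\binom{n-t}{k-t}^2$, matching the lower bound. This case therefore follows essentially for free.

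The substantive work is the non-cross-$t$-intersecting case. Suppose there exist $F_0\in\mf$ and $G_0\in\mg$ with $|F_0\cap G_0|<t$. The $s$-almost hypothesis forces at most $s$ members of $\mf$ to be $t$-disjoint from $G_0$, and symmetrically at most $s$ members of $\mg$ to be $t$-disjoint from $F_0$. Hence, after discarding up to $s$ exceptional sets on each side, every remaining $F\in\mf$ contains some $t$-subset of $G_0$ and every remaining $G\in\mg$ contains some $t$-subset of $F_0$. The crude bound $|\mf|\le s+\binom{k}{t}\binom{n-t}{k-t}$ is far too weak, so one must show that unless most of the mass of $\mf$ concentrates on a single $t$-subset $W\subseteq G_0$ (and similarly for $\mg$), a kernel/sunflower refinement iterated at most $k-t+1$ times extracts additional non-intersecting pairs, each of which forces either $\mf$ or $\mg$ into a strictly smaller container. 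Quantifying the compounded loss should yield $|\mf||\mg|<\binom{n-t}{k-t}^2$ once $n$ exceeds the prescribed threshold; combined with the cross-$t$-intersecting case, this completes the proof.

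The main obstacle is the precise bookkeeping in the stability argument of the third paragraph: the factor $(t+1)$ and the polynomial term $2(k-t+1)^2$ come from iteratively controlling which of the $\binom{k}{t}$ candidate $t$-kernels of $F_0$ and $G_0$ can dominate the families, while the $7s$ summand absorbs the cost of the up to $s$ exceptional $t$-disjoint sets that must be tracked symmetrically on both sides at each iteration. Getting the constants aligned so that the strict inequality $|\mf||\mg|<\binom{n-t}{k-t}^2$ kicks in at exactly this threshold, rather than at a significantly larger $n$, is the delicate part; everything else is either routine estimation or direct citation of the cross-$t$-intersecting extremal theorem.
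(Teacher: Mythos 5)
Your first two paragraphs are fine (the lower bound from the trivial pair is exactly how the paper starts, and if the optimal pair happens to be cross-$t$-intersecting one can indeed fall back on the known product theorem, although the paper never needs that citation), but the third paragraph — which is where the whole theorem lives — is a plan rather than a proof: "a kernel/sunflower refinement iterated at most $k-t+1$ times \ldots should yield $|\mf||\mg|<\binom{n-t}{k-t}^{2}$" defers precisely the quantitative content that has to be supplied, and you yourself flag the bookkeeping as the open obstacle. Moreover, the mechanism you sketch is genuinely insufficient at the stated threshold in one regime. The iteration (find $G_1\in\mg$ $t$-disjoint from the current kernel, enlarge the kernel, lose $+s$ each step) can only be run while the kernel is not a $t$-cover of $\mg$; when $\tau_{t}(\mg)\geq k+1$ you can iterate until the kernel exceeds size $k$, but then the accumulated losses give only $|\mf|\lesssim s\binom{k}{t}\sum_i(k-t+1)^{i}$, and the extra $\binom{k}{t}$ and $(k-t+1)^{k-t}$ factors are not absorbed by $n\geq(t+1)(2(k-t+1)^{2}+7s)$: already for $k=t+1$ this route gives roughly $|\mf|\leq(3t+4)s$ and $|\mg|\leq(t+1)(n-t)+s$, whose product exceeds $(n-t)^{2}$ once $s$ is large (since $n-t\approx 7s(t+1)$ at the threshold). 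The paper avoids this by a different tool in exactly this case: when $\tau_{t}(\mg)\geq k+1$ it extracts a skew sequence of pairs $(F_i,G_i)$ with $|F_i\cap G_i|<t$ and $|F_i\cap G_j|\geq t$ for $j<i$ and invokes the Oum--Wee bound $m\leq\binom{2k-2t+2}{k-t+1}$ to get $|\mf|\leq s\binom{2k-2t+2}{k-t+1}$, which is what makes the stated value of $n$ suffice. Nothing in your sketch supplies this (or an equivalent) ingredient.

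There is also a structural step missing at the end. Even if you establish that each family concentrates on a $t$-set ($\tau_{t}(\mf)=\tau_{t}(\mg)=t$ in the paper's language), you still must show the two $t$-sets coincide and that both families are the \emph{full} star; your sketch stops at "most of the mass concentrates". The paper gets this cleanly from optimality: an optimal pair is maximal, minimum $t$-covers of a maximal $s$-almost cross-$t$-intersecting pair are cross-$t$-intersecting (Lemma \ref{2502182}), hence the two $t$-covers are equal, and then the lower bound $\binom{n-t}{k-t}^{2}$ forces both families to be the whole star. So the overall architecture of the paper is a trichotomy on covering numbers (both $=t$; both $\leq k$ but not both $=t$, killed by the $f_{1}$-estimates; some $\geq k+1$, killed by Oum--Wee), not a dichotomy on cross versus non-cross, and the two ingredients just named — the skew-pair bound and the maximal-pair/cover argument — are the concrete gaps in your proposal.
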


  In light of Theorem \ref{1}, it is natural to study   $\mf$ and $\mg$ that maximize  $\left|\mf\right|\left|\mg\right|$, subject to the condition  $\left|\bigcap_{H\in\mf\cup\mg}H\right|<t$. Notice that   the extremal structure has been provided in \cite{2410172} when $\mf$ and $\mg$ are cross-$t$-intersecting.  In this paper, we address the complementary case.

To present our following results, we introduce some notations. Suppose that $W$, $X$ and $Y$ are subsets of $[n]$ with $W\subseteq X$. Write
\begin{equation*}
	\begin{aligned}
		\mh_{1}(X,W;k)&=\left\{ F\in\binom{X}{k}: W\subseteq F\right\},\\
		\mh_{2}(X,W;k)&=\left\{ F\in\binom{[n]}{k}: F\cap X=W\right\},\\
		\mm_{1}(Y;k,t)&=\left\{F\in\binom{[n]}{k}:\left|F\cap Y\right|\geq t\right\},\\
		\mm_{2}(X,W;t)&=\left\{ F\in\binom{X}{t+1}: \left|F\cap W\right|=t-1\right\}. 		
	\end{aligned}
\end{equation*}

\begin{thm}\label{2}
	Let $n$, $k$, $t$ and $s$ be positive integers with $k\geq t+2$ and $n\geq  (t+1)^{2}(2(k-t+1)^{2}+7s)$. Suppose that $\mf,\mg\subseteq \binom{[n]}{k}$ are $s$-almost cross-$t$-intersecting  but not cross-$t$-intersecting families with $\left|\mf\right|\leq \left|\mg\right|$.   If  $\left|\mf\right|\left|\mg\right|$ is maximum,  then there exist		 $X\in\binom{[n]}{k+1}$ and $W\in\binom{X}{t}$ such that
	$$\mf=\mh_{1}([n], W;k)\bs \ma\ \ \textnormal{and}\ \ \mg=\mh_{1}([n], W;k)\cup \mb,$$
	where $\ma$ is an  $\left(\binom{n-k-1}{k-t}-s\right)$-subset of $\mh_{2}(X,W;k)$ and $\mb$ is a $\min\left\{t,s\right\}$-subset of $\binom{X}{k}\bs\mh_{1}(X,W;k)$.
\end{thm}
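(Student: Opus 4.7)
I first verify that the proposed construction realizes the claimed bound, then argue optimality and uniqueness in two structural steps.

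For the construction, every $F\in\mh_1([n],W;k)\bs\ma$ contains $W$, so $F$ is $t$-intersecting with every $G\in\mh_1([n],W;k)\subseteq\mg$. For $G=X\bs\{w\}\in\mb$ with $w\in W$, one computes $|F\cap G|=(t-1)+|F\cap(X\bs W)|$, so $|F\cap G|<t$ exactly when $F\in\mh_2(X,W;k)$. Since $|\mb|\le s$, each $F\in\mf$ is $t$-disjoint with at most $s$ members of $\mg$; dually, each $G\in\mb$ is $t$-disjoint with exactly $|\mh_2(X,W;k)\cap\mf|=\binom{n-k-1}{k-t}-|\ma|=s$ members of $\mf$, and each $G\in\mh_1([n],W;k)$ with none. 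Hence $(\mf,\mg)$ is $s$-almost cross-$t$-intersecting; any $(F,G)\in(\mh_2(X,W;k)\bs\ma)\times\mb$ satisfies $|F\cap G|=t-1$, witnessing non-cross-$t$-intersection. The product equals $\bigl(\binom{n-t}{k-t}-\binom{n-k-1}{k-t}+s\bigr)\bigl(\binom{n-t}{k-t}+\min\{t,s\}\bigr)$, the benchmark to match.

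Now let $(\mf,\mg)$ be any extremal pair satisfying the hypotheses. Fix $(F_0,G_0)\in\mf\times\mg$ with $|F_0\cap G_0|\le t-1$; the $s$-almost condition yields $|\mg\cap\mm_1(F_0;k,t)|\ge|\mg|-s$ and $|\mf\cap\mm_1(G_0;k,t)|\ge|\mf|-s$. Since the product must beat the benchmark, both $|\mf|$ and $|\mg|$ are of order $\binom{n-t}{k-t}$, so a compression/shifting argument combined with Theorem~\ref{1} produces a $t$-set $W$ such that the defects $\mf^c:=\mf\bs\mh_1([n],W;k)$ and $\mg^c:=\mg\bs\mh_1([n],W;k)$ have size bounded by a constant depending only on $k,t,s$. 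If $F\in\mf^c$ has $j:=|F\cap W|\le t-1$, the count of $G\in\mh_1([n],W;k)$ with $|F\cap G|<t$ equals $\sum_{i=0}^{t-1-j}\binom{k-j}{i}\binom{n-t-k+j}{k-t-i}\ge\binom{n-k-1}{k-t}\gg s$, so the $s$-almost condition forces $\mh_1([n],W;k)\bs\mg$ to contain at least $\binom{n-k-1}{k-t}-s$ elements; comparing with the benchmark shows this strictly decreases the product, hence $\mf^c=\emptyset$. Any $H\in\mh_1([n],W;k)\bs\mg$ is $t$-intersecting with every $F\in\mf\subseteq\mh_1([n],W;k)$ and so can be added to $\mg$ without violating the $s$-almost condition, so maximality gives $\mh_1([n],W;k)\subseteq\mg$.

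Finally I determine $\mg^c$. For $G\in\mg^c$ the count above (with $F$ and $G$ swapped) is minimized at $j=|G\cap W|=t-1$ with value $\binom{n-k-1}{k-t}$, and the minimizing set of $F$ equals $\mh_2(X_G,W;k)$ where $X_G:=W\cup G$ is a $(k{+}1)$-set. Any $G\in\mg^c$ with $j<t-1$ would force $|\ma|>\binom{n-k-1}{k-t}-s$, contradicting maximality. If two elements of $\mg^c$ yielded distinct $(k{+}1)$-sets $X\ne X'$, then $\mf$ would have to omit $\binom{n-k-1}{k-t}-s$ elements of each of $\mh_2(X,W;k)$ and $\mh_2(X',W;k)$; using $|\mh_2(X,W;k)\cap\mh_2(X',W;k)|=\binom{n-k-2}{k-t}$, the union contributes roughly $2\binom{n-k-1}{k-t}-\binom{n-k-2}{k-t}$ forbidden elements, and the hypothesis $n\ge(t+1)^2(2(k-t+1)^2+7s)$ is precisely calibrated to make this strictly worse than the single-$X$ configuration. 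Hence all $X_G$ coincide with one $(k{+}1)$-set $X$, giving $\mg^c\subseteq\binom{X}{k}\bs\mh_1(X,W;k)$ and so $|\mg^c|\le t$; simultaneously any $F\in\mh_2(X,W;k)\cap\mf$ is $t$-disjoint with every $G\in\mg^c$, capping $|\mg^c|\le s$. Maximality then gives $|\mg^c|=\min\{t,s\}$ together with $|\ma|=\binom{n-k-1}{k-t}-s$. The principal obstacle is the uniqueness-of-$X$ step, where the sharp lower bound on $n$ in the hypothesis is essential to rule out configurations in which $\mg^c$ spans several $(k{+}1)$-sets.
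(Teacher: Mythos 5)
The verification of Example \ref{2503041} and the final counting once everything lives inside a single $(k+1)$-set are fine in spirit, but the structural core of your argument has a genuine gap: the step that produces the common $t$-set $W$ with ``defects bounded by a constant'' out of ``a compression/shifting argument combined with Theorem~\ref{1}''. Neither ingredient works as stated. Theorem~\ref{1} is an exact result, not a stability result, and it gives no leverage here because the benchmark $g_{1}(n,k,t,s)$ is an order of magnitude below $\binom{n-t}{k-t}^{2}$: since $\binom{n-t}{k-t}-\binom{n-k-1}{k-t}\le (k-t+1)\binom{n-t-1}{k-t-1}$, even the extremal $\left|\mf\right|$ is only of order $\binom{n-t-1}{k-t-1}$, so your intermediate claim that both $\left|\mf\right|$ and $\left|\mg\right|$ are of order $\binom{n-t}{k-t}$ is false, and ``product at least the benchmark'' does not place the pair anywhere near the configuration of Theorem~\ref{1}. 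You also do not show that compressions preserve the $s$-almost cross-$t$-intersecting condition, nor how exact (uniqueness) information would be pulled back through shifts. As a result, the genuinely competing configurations are never excluded: pairs with $\tau_{t}(\mf),\tau_{t}(\mg)\ge t+1$, pairs with covering numbers $(t,t+2)$, a Hilton--Milner-type $\mg$ with a unique minimum $(t+1)$-cover (e.g.\ $\mg$ close to $\mm_{1}(Y;k,t)$), or families with $t$-covering number exceeding $k$ all yield products of the same order $\binom{n-t-1}{k-t-1}\binom{n-t}{k-t}$ as $g_{1}$, and ruling them out is where the paper does most of its work (Lemmas \ref{2503081}, \ref{2502184}, \ref{2503082}, \ref{2504066} and the inequalities of Section~\ref{2504132}), via the $t$-covering-number machinery rather than compression. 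Nothing in your proposal replaces that analysis.

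Even granting a bounded-defect localization, two later steps are not forced. First, from $\mf^{c}\neq\emptyset$ you deduce that $\mg$ misses at least $\binom{n-k-1}{k-t}-s$ members of the star and assert this ``strictly decreases the product''; but with only an unspecified constant bound $C=C(k,t,s)$ on $\left|\mf^{c}\right|$ and $\left|\mg^{c}\right|$, the resulting bound $\left(\binom{n-t}{k-t}+\left|\mf^{c}\right|\right)\left(\binom{n-t}{k-t}-\binom{n-k-1}{k-t}+s+\left|\mg^{c}\right|\right)$ exceeds $g_{1}(n,k,t,s)$ as soon as $C>\min\{t,s\}$, so no contradiction follows without pinning the defects down much more precisely (this would also have to interact with the hypothesis $\left|\mf\right|\le\left|\mg\right|$, which is what excludes the role-swapped $(t+1,t)$ configuration in the paper, Lemma \ref{2506062}). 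Second, the uniqueness-of-$X$ step is left at the level of ``roughly'' and ``precisely calibrated''; the paper carries out this computation explicitly (the claim inside Lemma \ref{2506062}, using $f_{3}$ and Lemma \ref{2503042}), and it is not a formality. So, as written, the proposal reproduces the easy endpoints of the paper's proof but omits or misstates the arguments that actually drive it.
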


\begin{thm}\label{3}
	Let $n$, $t$ and $s$ be positive integers with $n\geq 5s(t+1)^{2}$. Suppose that $\mf, \mg \subseteq \binom{[n]}{t+1}$ are $s$-almost cross-$t$-intersecting  but not cross-$t$-intersecting families with $\left|\mf\right|\leq  \left|\mg\right|$, and   $\left|\mf\right|\left|\mg\right|$ is maximum. 
	\begin{enumerate}[\normalfont(i)]
		\item   If $t\geq s+2$, then there exists $Y\in\binom{[n]}{t+1}$ such that 
		$$\mf=\left\{Y\right\}\ \ \textnormal{and} \ \ \mg=\mm_{1}(Y;t+1,t)\cup \mc,$$
		where $\mc$ is an $s$-subset of $\binom{[n]}{t+1}\bs\mm_{1}(Y;t+1,t)$.
		\item  If  $t\leq s+1$, then there exist $Z\in\binom{[n]}{t+s+2}$ and $W\in\binom{Z}{t}$ such that 
		$$\mf=\mh_{1}(Z,W;t+1)\ \ \textnormal{and} \ \ \mg=\mh_{1}([n],W;t+1)\cup \md,$$
		where $\md$ is an $(s+2)$-subset of $\mm_{2}(Z,W;t)$ for which each element of $Z\bs W$ is contained in exactly two members in $\md$.
	\end{enumerate}
\end{thm}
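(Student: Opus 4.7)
The plan is to split on $|\mf|$: if $|\mf|=1$, the problem reduces directly to maximizing $|\mg|$ subject to the $s$-almost cap from the single element of $\mf$, which will force the case~(i) construction; if $|\mf|\ge 2$, I would use a stability argument to pin $\mf$ and $\mg$ onto a common $t$-kernel $W$, and then carefully optimize locally to obtain the case~(ii) construction. The final dichotomy at $t\ge s+2$ vs.\ $t\le s+1$ will fall out of comparing the two resulting products.

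\textbf{Locating the common $t$-kernel when $|\mf|\ge 2$.} I would first pick $F_{0}\in\mf$ and use the $s$-almost cap to show that all but at most $s$ members of $\mg$ are $t$-intersecting with $F_{0}$, hence lie in the union of the $t+1$ stars $\mh_{1}([n],W;t+1)$ with $W\in\binom{F_{0}}{t}$. Using $n\ge 5s(t+1)^{2}$, averaging will single out a dominant $W$. Repeating with another $F_{0}'\in\mf$ and applying the cap from $\mg$'s side should then pin down a unique $W$ shared by all of $\mf$ and the bulk of $\mg$.

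\textbf{Pinning down the deviations.} Next I would set $\mf_{\mathrm{star}}=\{F\in\mf:W\subseteq F\}$ and $\mf_{\mathrm{dev}}=\mf\setminus\mf_{\mathrm{star}}$, and define $\mg_{\mathrm{star}},\mg_{\mathrm{dev}}$ analogously. Any $F\in\mf_{\mathrm{dev}}$ with $|F\cap W|\le t-2$ would be $t$-disjoint from every $G\in\mg_{\mathrm{star}}$ save those containing an element of $F\setminus W$, which for $n$ large exceeds the $s$ cap; so every $F\in\mf_{\mathrm{dev}}$ should satisfy $|F\cap W|=t-1$, i.e.\ $F=(W\setminus\{w\})\cup\{a,b\}$ with $w\in W$ and $a,b\notin W$, and symmetrically for $\mg_{\mathrm{dev}}$. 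Let $Z$ collect $W$ together with all the elements $a,b$ arising from $\mf_{\mathrm{dev}}$; the $s$-cap on each $F\in\mf_{\mathrm{dev}}$ against $\mg_{\mathrm{star}}$ will force $|Z\setminus W|\le s+2$. A maximization argument, using that $|\mf||\mg|$ is maximum, should then pin $\mf=\mh_{1}(Z,W;t+1)$ with $|Z|=t+s+2$, and $\mg=\mh_{1}([n],W;t+1)\cup\md$ with $\md\subseteq\mm_{2}(Z,W;t)$.

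\textbf{Saturation and comparison.} For each $G\in\md$ the $s$-almost cap from $\mf$'s side will force the two elements of $G\setminus W$ to lie in $Z\setminus W$, and double counting of $t$-disjoint pairs in $\mf\times\md$ should require each element of $Z\setminus W$ to occur in exactly two members of $\md$. This will give $|\md|=s+2$, completing the case~(ii) construction. I would then compare the two candidate products $(t+1)(n-t-1)+1+s$ from case~(i) and $(s+2)(n-t+s+2)$ from case~(ii) to obtain the dichotomy: the former dominates iff $t\ge s+2$. The most delicate step will be proving the $2$-regularity of $\md$ on $Z\setminus W$: while a simple degree count fixes the total degree, eliminating imbalanced configurations (say, one vertex of degree $3$ and another of degree $1$) will demand a careful exchange argument showing that any imbalance either violates the $s$-cap on the high-degree side or admits a strict local improvement, contradicting the maximality of $|\mf||\mg|$.
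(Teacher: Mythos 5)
Your overall plan — split on $\left|\mf\right|$, locate a common $t$-set $W$, bound the deviations by the $s$-cap, and finish by comparing the two candidate products — is the same strategy as the paper's, and your handling of $\left|\mf\right|=1$ and the final comparison (the case-(i) product beats the case-(ii) product iff $t\geq s+2$) are correct. But two concrete problems remain in the $\left|\mf\right|\geq 2$ branch. First, you have put the deviations on the wrong side: since $\mf$ consists of $(t+1)$-sets, once the dominant star chunk of $\mg$ over $W$ has more than $s+2$ members, any $F$ with $\left|F\cap W\right|\leq t-1$ is $t$-disjoint from all but at most two sets $W\cup\{x\}$ in that chunk, so $\mf_{\mathrm{dev}}=\emptyset$; consequently your $Z$, defined from the elements of $\mf_{\mathrm{dev}}$, collapses to $W$ and cannot have size $t+s+2$. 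The set $Z$ must be $\bigcup_{F\in\mf}F$, and the sets of the form $(W\bs\{w\})\cup\{a,b\}$ are the deviations of $\mg$, not of $\mf$. Second, and more seriously, the step ``a maximization argument should pin $\mf=\mh_{1}(Z,W;t+1)$ with $\left|Z\right|=t+s+2$'' hides exactly the hard quantitative point: you must rule out $2\leq\left|\mf\right|\leq s+1$. The upper bound $\left|\mf\right|\leq s+2$ comes cheaply from the non-cross-$t$-intersecting hypothesis (some $G\in\mg$ has $\left|G\cap W\right|<t$ and $t$-intersects at most two members of the star $\mf$), but to exclude the intermediate sizes you need an argument such as: for $\left|\mf\right|\geq 3$, every $G\in\mg$ with $W\nsubseteq G$ is $t$-disjoint from at least $\left|\mf\right|-2$ members of $\mf$, so double counting against the cap gives $\left|\mg\bs\mg_{W}\right|\leq s\left|\mf\right|/(\left|\mf\right|-2)$, whence $\left|\mf\right|\left|\mg\right|\leq \left|\mf\right|(n-t)+s\left|\mf\right|^{2}/(\left|\mf\right|-2)$, which is strictly below $(s+2)(n-t)+(s+2)^{2}$ when $\left|\mf\right|\leq s+1$ and $n\geq 5s(t+1)^{2}$ (the case $\left|\mf\right|=2$ needs a separate, easier count). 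Without this, the conclusion $\left|Z\right|=t+s+2$ does not follow from maximality alone.

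On the other hand, you overestimate the difficulty of the $2$-regularity of $\md$ on $Z\bs W$: no exchange argument is needed. Once $\left|\mf\right|=s+2$, $\mf=\mh_{1}(Z,W;t+1)$ and $\md\subseteq\mm_{2}(Z,W;t)$, every $G\in\md$ is $t$-disjoint from exactly $s$ members of $\mf$, so the number of $t$-disjoint pairs equals $s\left|\md\right|$; the cap from $\mf$'s side bounds it by $s(s+2)$, giving $\left|\md\right|\leq s+2$, and maximality forces $\left|\md\right|=s+2$, i.e.\ equality throughout. Equality means each $F=W\cup\{z\}$ is $t$-disjoint from exactly $s$ members of $\md$, which is precisely the statement that exactly two members of $\md$ contain $z$. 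This is how the paper concludes, and incorporating it would also simplify your sketch.
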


By combining \cite[Theorem 1.2]{2410172}, Theorems \ref{2} and \ref{3},  we may determine the structure of $s$-almost cross-$t$-intersecting families $\mf$ and $\mg$ with the maximum $\left|\mf\right|\left|\mg\right|$ under the condition  $\left|\bigcap_{H\in\mf\cup\mg}H\right|<t$. See Corollaries \ref{5} and \ref{6}.

The rest of this paper is organized as follows. In Section \ref{2506041}, we show some auxiliary results.   Theorems \ref{1}, \ref{2} and \ref{3}  are proved in Sections \ref{2503171}, \ref{2503172} and  \ref{2503173} respectively.  In Section  \ref{2505231}, we derive a stability result of Theorem \ref{1}.   To ensure a smoother reading experience, some   inequalities needed in this paper are left in Section \ref{2504132}.

\section{Preliminaries}\label{2506041}
In this section, we  show some auxiliary results  which are used in the proof of main theorems. For a family $\mf$ and a set $H$, write
$$\mf_{H}=\left\{F\in\mf: H\subseteq F\right\},\ \ \md_{\mf}(H;t)=\left\{ F\in\mf: \left|F\cap H\right|<t\right\}.$$

\begin{lem}\label{2502162}
	Let $n$, $k$, $t$ and $s$ be positive integers with $k\geq t+1$ and $n\geq 2k$. Suppose that $\mf\subseteq \binom{[n]}{k}$  and $G\in \binom{[n]}{k}$  satisfy  $\left|\md_{\mf}(G;t)\right|\leq s$. If $H$ is a non-empty subset of $[n]$ with $\left|H\cap G\right|<t$, then  there exists a	subset $R$ of $[n]$ with $H\subsetneq R$ such that
	$$\left| \mf_{H}\right|\leq (k-t+1)^{\left| R\right|-\left|H\right|}\left| \mf_{R}\right|+s.$$
\end{lem}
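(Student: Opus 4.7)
The plan is to partition $\mf_H$ into the members that $t$-intersect $G$ and those that do not, absorb the latter into the additive $+s$, and apply a pigeonhole argument to the former over the elements of $G\bs H$.

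Let $\mf^{\ast}=\mf\bs\md_{\mf}(G;t)$, so that $|\mf_H\bs\mf^{\ast}|\le|\md_{\mf}(G;t)|\le s$ and every $F\in\mf^{\ast}$ satisfies $|F\cap G|\ge t$. For any $F\in\mf^{\ast}_H$, since $F\supseteq H$ and $|H\cap G|<t\le|F\cap G|$, the set $F\cap(G\bs H)$ has size at least $r:=t-|H\cap G|\ge 1$. Double counting pairs $(F,x)$ with $F\in\mf^{\ast}_H$ and $x\in F\cap(G\bs H)$ gives
$$r\cdot|\mf^{\ast}_H|\;\le\;\sum_{x\in G\bs H}|\mf^{\ast}_{H\cup\{x\}}|,$$
and since $|G\bs H|=k-|H\cap G|$, the pigeonhole principle furnishes some $x\in G\bs H$ with
$$|\mf^{\ast}_{H\cup\{x\}}|\;\ge\;\frac{t-|H\cap G|}{k-|H\cap G|}\,|\mf^{\ast}_H|\;\ge\;\frac{|\mf^{\ast}_H|}{k-t+1},$$
where the last step uses the elementary inequality $(k-a)/(t-a)\le k-t+1$, valid for $a\in\{0,\ldots,t-1\}$. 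Choosing $R=H\cup\{x\}$ then yields
$$|\mf_H|\;\le\;|\mf^{\ast}_H|+s\;\le\;(k-t+1)\,|\mf^{\ast}_R|+s\;\le\;(k-t+1)\,|\mf_R|+s,$$
which is already the desired bound with $|R|-|H|=1$.

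The exponential form stated in the lemma is accommodated by iterating the pigeonhole step: so long as the current superset $R'$ still satisfies $|R'\cap G|<t$, the same argument applied to $\mf^{\ast}_{R'}$ produces an element of $G\bs R'$ that can be adjoined. The essential point is that the partition $\mf=\mf^{\ast}\sqcup\md_{\mf}(G;t)$ is performed once at the outset, so the $+s$ term is paid only once rather than accumulating as a geometric sum across iterations; after $j$ steps one obtains $R$ with $|R|-|H|=j$ and $|\mf_H|\le(k-t+1)^{j}|\mf_R|+s$, and the procedure can be continued up to $j=t-|H\cap G|$, at which point $|R\cap G|=t$ and the pigeonhole step no longer applies.

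The argument is essentially routine once one commits to separating out the $\le s$ "bad" members from $\mf$ before any counting is done; the main subtlety, rather than a genuine obstacle, is precisely this decision. Attempting to induct on $|R|-|H|$ while working with $\mf_H$ as a whole would force an additive loss of $s\cdot\bigl(1+(k-t+1)+\cdots+(k-t+1)^{|R|-|H|-1}\bigr)$, which is much weaker than what is claimed. Beyond this, the only arithmetic check required is the bound $(k-a)/(t-a)\le k-t+1$ that pins the pigeonhole constant to exactly $k-t+1$; the hypothesis $n\ge 2k$ does not appear to play any role in the proof itself, and presumably is included only for compatibility with the later applications of the lemma.
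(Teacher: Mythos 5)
Your proof is correct, and it uses the same key decomposition as the paper: split off $\md_{\mf}(G;t)$ once, so the additive $+s$ is paid a single time, and then push up only within the subfamily $\mf^{\ast}=\mf\bs\md_{\mf}(G;t)$ whose members $t$-intersect $G$. Where you differ is in how the pushing-up is done: the paper simply cites Lemma 2.7 of the reference by Cao, Lu, Lv and Wang, which in one shot produces a set $R$ with $\left|R\right|=\left|H\right|+t-\left|H\cap G\right|$ and the sharper factor $\binom{k-\left|H\cap G\right|}{t-\left|H\cap G\right|}$, which is then relaxed to $(k-t+1)^{\left|R\right|-\left|H\right|}$ exactly as in your arithmetic check $\frac{k-a}{t-a}\le k-t+1$. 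You instead give a self-contained double-counting/pigeonhole step over $G\bs H$, and you correctly observe that a single-element extension $R=H\cup\{x\}$ already satisfies the statement, since the lemma only asserts the existence of some $R\supsetneq H$; your remark that the one-step version (or its iteration with $+s$ paid once) is all that is needed is also consistent with how the lemma is later consumed in Lemma \ref{2502165}, where the geometric sum in $s$ arises only because a different $G_i$ must be chosen at each stage. So your route is slightly more elementary and self-contained (and indeed never uses $n\ge 2k$), at the cost of a weaker single-step conclusion than the cited lemma provides, which is immaterial here.
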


\begin{proof}
	
	If $\mf=\md_{\mf}(G;t)$, then the required result holds. Next assume $\mf\bs\md_{\mf}(G;t)\neq \emptyset$.
	
	Let $\mi=\mf\bs \md_{\mf}(G;t)$. 	
	Then $\left|G\cap I\right|\geq t$ for any $I\in\mi$. By \cite[Lemma 2.7]{2410172}, there exists a $\left(\left|H\right|+t-\left|H\cap G\right|\right)$-subset $R$ with $H\subsetneq R$ such that $$\left|\mi_{H}\right|\leq \binom{k-\left|H\cap G\right|}{t-\left|H\cap G\right|}\left| \mi_{R}\right|\leq \binom{k-\left|H\cap G\right|}{t-\left|H\cap G\right|}\left| \mf_{R}\right|.$$
	It follows from $\mi_{H}\subseteq \mf_{H}$ and $\left| \mf_{H}\bs\mi_{H}\right|\leq \left|\mf\bs \mi\right|\leq s$ that
	\begin{equation}\label{2502164}
		\left|\mf_{H}\right|\leq \binom{k-\left|H\cap G\right|}{t-\left|H\cap G\right|}\left| \mf_{R}\right|+s.
	\end{equation}

	Observe that
	$$\frac{k-\left|H\cap G\right|}{t-\left|H\cap G\right|}\leq \frac{k-\left| H\cap G\right|-1}{t-\left|H\cap G\right|-1}\leq \cdots \leq \frac{k-t+1}{1}.$$
	Then $\binom{k-\left|H\cap G\right|}{t-\left|H\cap G\right|}\leq (k-t+1)^{t-\left|H\cap G\right|}=(k-t+1)^{\left|R\right|-\left|H\right|}$. This together with (\ref{2502164}) yields  the desired result.
\end{proof}

Suppose that $\mf\subseteq\binom{[n]}{k}$. 
A subset $T$  of $[n]$ is called a \textit{$t$-cover} of $\mf$ if $\left|T\cap F\right|\geq t$ for any $F\in \mf$.
The \textit{$t$-covering number} $\tau_{t}(\mf)$ of $\mf$ is the minimum size of a $t$-cover of $\mf$. The family of all $t$-covers with size $\tau_{t}(\mf)$ is denoted by $\mt_{t}(\mf)$.

From now on, we assume that $s$-almost cross-$t$-intersecting families are non-empty, since those families we are concerned with  must be non-empty.

\begin{lem}\label{2502165}
	Let $n$, $k$, $t$ and $s$ be positive integers with $k\geq t+1$ and $n\geq \left(t+1\right)\left(k-t+1\right)^{2}$. Suppose that $\mf,\mg \subseteq \binom{[n]}{k}$ are $s$-almost cross-$t$-intersecting families with $\tau_{t}(\mg)\leq k$. If  $H$ is a non-empty subset of $[n]$ with $ \left|H\right|\leq \tau_{t}(\mg)$, then
	$$\left|\mf_{H}\right|\leq (k-t+1)^{\tau_{t}(\mg)-\left|H\right|}\binom{n-\tau_{t}(\mg)}{k-\tau_{t}(\mg)}+\sum_{i=0}^{\tau_{t}(\mg)-\left|H\right|-1}s(k-t+1)^{i}.$$
\end{lem}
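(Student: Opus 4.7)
The plan is to induct on $\tau_t(\mg) - \left|H\right|$. In the base case $\left|H\right| = \tau_t(\mg)$, the empty sum vanishes and the right-hand side reduces to $\binom{n-\tau_t(\mg)}{k-\tau_t(\mg)}$, which is the trivial upper bound on $\left|\mf_H\right|$.

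For the inductive step I would suppose $\left|H\right| < \tau_t(\mg)$. Then $H$ cannot be a $t$-cover of $\mg$, so I pick $G \in \mg$ with $\left|G \cap H\right| < t$. The $s$-almost cross-$t$-intersecting hypothesis gives $\left|\md_\mf(G;t)\right| \leq s$, so Lemma~\ref{2502162} produces a set $R$ with $H \subsetneq R$ and
$$\left|\mf_H\right| \leq (k-t+1)^{\left|R\right|-\left|H\right|}\left|\mf_R\right| + s.$$

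Next I would split into two cases. If $\left|R\right| \leq \tau_t(\mg)$, I apply the inductive hypothesis to $\mf_R$, substitute, and re-index the resulting geometric sum; the extra $+s$ is absorbed because $\left|R\right|-\left|H\right| \geq 1$ forces $s \leq s\sum_{j=0}^{\left|R\right|-\left|H\right|-1}(k-t+1)^j$. If $\left|R\right| > \tau_t(\mg)$, I use the trivial bound $\left|\mf_R\right| \leq \binom{n-\left|R\right|}{k-\left|R\right|}$ (which is $0$ when $\left|R\right| > k$); the target then reduces to showing
$$(k-t+1)^d\binom{n-\tau_t(\mg)-d}{k-\tau_t(\mg)-d} \leq \binom{n-\tau_t(\mg)}{k-\tau_t(\mg)},$$
where $d = \left|R\right| - \tau_t(\mg) \geq 1$, and the stray $+s$ is absorbed by the $j=0$ term of the sum on the right (available because $\left|H\right| < \tau_t(\mg)$).

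The main obstacle will be this last binomial inequality, which is where the hypothesis $n \geq (t+1)(k-t+1)^2$ enters. Writing the ratio as $\prod_{i=0}^{d-1}\frac{k-\tau_t(\mg)-i}{n-\tau_t(\mg)-i}$ and bounding each factor by $\frac{k}{n-k+1}$, it suffices to show $(k-t+1)k \leq n-k+1$. Setting $m = k-t+1 \geq 2$ turns this into $(t+1)m^2 \geq m^2 + tm + t - 2$, i.e.\ $t(m^2-m-1) + 2 \geq 0$, which is clear for $m \geq 2$. Everything else is straightforward geometric-sum bookkeeping.
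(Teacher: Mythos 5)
Your proposal is correct and is essentially the paper's argument in recursive form: the paper iterates Lemma~\ref{2502162} along a chain $H=H_1\subsetneq H_2\subsetneq\cdots\subsetneq H_u$ until the size reaches $\tau_t(\mg)$ and then bounds the accumulated geometric sum, which is exactly what your induction on $\tau_t(\mg)-\left|H\right|$ with the two cases $\left|R\right|\leq\tau_t(\mg)$ and $\left|R\right|>\tau_t(\mg)$ accomplishes. Your inline binomial estimate $(k-t+1)^{d}\binom{n-\tau_t(\mg)-d}{k-\tau_t(\mg)-d}\leq\binom{n-\tau_t(\mg)}{k-\tau_t(\mg)}$ is just the paper's Lemma~\ref{2410176} (proved there by the same factor-by-factor comparison), so no genuinely different route is taken.
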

\begin{proof}
	If $\left|H\right|=\tau_{t}(\mg)$, then the desired result follows from $\left|\mf_{H}\right|\leq \binom{n-\left|H\right|}{k-\left|H\right|}=\binom{n-\tau_{t}(\mg)}{k-\tau_{t}(\mg)}$. Next assume $\left|H\right|<\tau_{t}(\mg)$.

	From $\left|H\right|<\tau_{t}(\mg)$, we know $\left|H\cap G_{1}\right|<t$ for some $G_{1}\in\mg$. Note that  $\left|\md_{\mf}(G_{1};t)\right|\leq s$. By Lemma \ref{2502162}, there exists a subset $H_{2}$ of $[n]$ with $H\subsetneq H_{2}$ such that
	$$\left| \mf_{H}\right|\leq (k-t+1)^{\left| H_{2}\right|-\left|H\right|}\left| \mf_{H_{2}}\right|+s.$$
	Repeated the process above, we finally get an ascending chain of subsets $H=:H_{1}\subsetneq H_{2}\subsetneq \cdots\subsetneq H_{u}$ with $\left|H_{u-1}\right|<\tau_{t}(\mg)\leq \left|H_{u}\right|$ such that
	$$\left| \mf_{H_{i}}\right|\leq (k-t+1)^{\left|H_{i+1}\right|-\left|H_{i}\right|}\left|\mf_{H_{i+1}}\right|+s,\ \ 1\leq i\leq u-1.$$
	This implies 
	\begin{equation*}
		\left|\mf_{H}\right|\leq (k-t+1)^{\left|H_{u}\right|-\left|H\right|}\left| \mf_{H_{u}}\right|+\sum_{i=1}^{u-1}s(k-t+1)^{\left|H_{i}\right|-\left|H\right|}.
	\end{equation*}
	It follows from $\sum_{i=1}^{u-1}s(k-t+1)^{\left|H_{i}\right|-\left|H\right|}\leq \sum_{i=0}^{\tau_{t}(\mg)-\left|H\right|-1}s(k-t+1)^{i}$ that 
	\begin{equation}\label{2502166}
		\left|\mf_{H}\right|\leq (k-t+1)^{\left|H_{u}\right|-\left|H\right|}\left| \mf_{H_{u}}\right|+\sum_{i=0}^{\tau_{t}(\mg)-\left|H\right|-1}s(k-t+1)^{i}.
	\end{equation}

	Since $t\leq \tau_{t}(\mg)\leq \left|H_{u}\right|$, by Lemma \ref{2410176}, we have 
	$$(k-t+1)^{\left|H_{u}\right|-\left|H\right|}\left| \mf_{H_{u}}\right|\leq (k-t+1)^{\left|H_{u}\right|-\left|H\right|} \binom{n-\left|H_{u}\right|}{k-\left|H_{u}\right|}\leq (k-t+1)^{\tau_{t}(\mg)-\left|H\right|}\binom{n-\tau_{t}(\mg)}{k-\tau_{t}(\mg)}.$$
	This together with (\ref{2502166}) yields the desired result.
\end{proof}

In the subsequent two lemmas, we establish some upper bounds for the product of sizes of $s$-almost cross-$t$-intersecting families in terms of their $t$-covering numbers. For positive integers $n$, $k$, $t$, $s$ and $x$, write 
\begin{equation}\label{2505251}
	f_{1}(n,k,t,s,x)=(k-t+1)^{x-t}\binom{x}{t}\binom{n-x}{k-x}+\sum_{i=0}^{x-t-1}s(k-t+1)^{i}\binom{x}{t}. 
\end{equation}

\begin{lem}\label{2503081}
	Let $n$, $k$, $t$ and $s$ be positive integers with $k\geq t+1$ and $n\geq \left(t+1\right)\left(k-t+1\right)^{2}$. Suppose that $\mf,\mg \subseteq \binom{[n]}{k}$ are $s$-almost cross-$t$-intersecting families. If  $\tau_{t}(\mf)\leq k$  and $\tau_{t}(\mg)\leq k$, then
	$$\left|\mf\right|\left|\mg\right|\leq f_{1}(n,k,t,s,\tau_{t}(\mf))f_{1}(n,k,t,s,\tau_{t}(\mg)).$$
\end{lem}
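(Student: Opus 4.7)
The plan is to bound $|\mf|$ and $|\mg|$ separately in terms of $\tau_{t}(\mf)$ and $\tau_{t}(\mg)$, and then multiply. Set $a=\tau_{t}(\mf)$ and $b=\tau_{t}(\mg)$, and fix a minimum $t$-cover $T_{\mf}\in\mt_{t}(\mf)$. Since every $F\in\mf$ satisfies $|F\cap T_{\mf}|\ge t$, each such $F$ contains some $t$-subset of $T_{\mf}$, so
\[
\mf=\bigcup_{H\in\binom{T_{\mf}}{t}}\mf_{H},\qquad |\mf|\le\sum_{H\in\binom{T_{\mf}}{t}}|\mf_{H}|.
\]
Note also that $b\ge t$: any $t$-cover of $\mg$ must share at least $t$ points with any fixed $G\in\mg$, hence has at least $t$ elements. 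The same observation gives $a\ge t$.

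Next I would apply Lemma~\ref{2502165} to $\mf$, $\mg$ and each such $H$; the size condition $|H|=t\le b$ is precisely the bound $b\ge t$ just observed. This yields
\[
|\mf_{H}|\le(k-t+1)^{b-t}\binom{n-b}{k-b}+\sum_{i=0}^{b-t-1}s(k-t+1)^{i}.
\]
Summing over the $\binom{a}{t}$ choices of $H$, and noting that definition~\eqref{2505251} factors as
\[
f_{1}(n,k,t,s,x)=\binom{x}{t}\left[(k-t+1)^{x-t}\binom{n-x}{k-x}+\sum_{i=0}^{x-t-1}s(k-t+1)^{i}\right],
\]
one obtains
\[
|\mf|\le\binom{a}{t}\left[(k-t+1)^{b-t}\binom{n-b}{k-b}+\sum_{i=0}^{b-t-1}s(k-t+1)^{i}\right]=\frac{\binom{a}{t}}{\binom{b}{t}}\,f_{1}(n,k,t,s,b).
\]

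Interchanging the roles of $\mf$ and $\mg$ in exactly the same argument gives $|\mg|\le\binom{b}{t}\binom{a}{t}^{-1}f_{1}(n,k,t,s,a)$. Multiplying these two inequalities, the factors $\binom{a}{t}/\binom{b}{t}$ and $\binom{b}{t}/\binom{a}{t}$ cancel and the product is exactly $f_{1}(n,k,t,s,a)\,f_{1}(n,k,t,s,b)$, as claimed. I expect essentially no analytic obstacle in this proof: Lemma~\ref{2502165} carries all the weight, and this lemma is just the ``double $t$-cover'' packaging of that single-star estimate. The only point that needs explicit verification is the inequality $b\ge t$ (and symmetrically $a\ge t$) so that Lemma~\ref{2502165} may be invoked at sets of size exactly $t$, which follows immediately from the fact that $\mf,\mg$ are non-empty subfamilies of $\binom{[n]}{k}$.
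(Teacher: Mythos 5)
Your proposal is correct and follows essentially the same route as the paper: decompose each family over the $t$-subsets of a minimum $t$-cover, bound each star $\mf_{H}$ (resp. $\mg_{H}$) via Lemma~\ref{2502165} using the other family's $t$-covering number, and multiply, with the binomial factors $\binom{\tau_{t}(\mf)}{t}$ and $\binom{\tau_{t}(\mg)}{t}$ recombining to give $f_{1}(n,k,t,s,\tau_{t}(\mf))f_{1}(n,k,t,s,\tau_{t}(\mg))$. The paper states this more tersely (via a $\max$ over the stars), but the argument is the same, including the observation $\tau_{t}\geq t$ needed to invoke Lemma~\ref{2502165} at sets of size $t$.
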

\begin{proof}
	Let $T_{f}\in\mt_{t}(\mf)$ and $T_{g}\in\mt_{t}(\mg)$. Then 
	$\mf=\bigcup_{H\in\binom{T_{f}}{t}}\mf_{H}$ and $\mg=\bigcup_{H\in\binom{T_{g}}{t}}\mg_{H}$.
	It follows  that
	$$\left|\mf\right|\leq \binom{\tau_{t}(\mf)}{t}\cdot\max_{H\in\binom{T_{f}}{t}}\left|\mf_{H}\right|, \ \ \left|\mg\right|\leq \binom{\tau_{t}(\mg)}{t}\cdot\max_{H\in\binom{T_{g}}{t}}\left|\mg_{H}\right|.$$
	This together with Lemma \ref{2502165} yields $\left|\mf\right|\left|\mg\right|\leq f_{1}(n,k,t,s,\tau_{t}(\mf))f_{1}(n,k,t,s,\tau_{t}(\mg))$.
\end{proof}

\begin{lem}\label{2502184}
	Let $n$, $k$, $t$ and $s$ be positive integers with $k\geq t+1$ and $n\geq 2k$. Suppose that $\mf,\mg \subseteq \binom{[n]}{k}$ are $s$-almost cross-$t$-intersecting families. If  $\tau_{t}(\mg)\geq  k+1$, then 
	$$\left|\mf\right|\left|\mg\right|\leq s\binom{k}{t}\binom{2k-2t+2}{k-t+1}\binom{n-t}{k-t}+s^{2}\binom{2k-2t+2}{k-t+1}.$$
\end{lem}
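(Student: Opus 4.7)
The upper bound in the lemma factors as
\[
|\mf|\cdot|\mg| \leq s\binom{2(k-t+1)}{k-t+1} \cdot \left[\binom{k}{t}\binom{n-t}{k-t} + s\right],
\]
so the plan is to establish the two separate bounds $|\mg| \leq \binom{k}{t}\binom{n-t}{k-t} + s$ and $|\mf| \leq s\binom{2(k-t+1)}{k-t+1}$, and then multiply them.

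The bound on $|\mg|$ will be the routine step. I will pick any $F_{1}\in\mf$ (nonempty by the convention stated just above Lemma \ref{2502165}) and use that $|\md_{\mg}(F_{1};t)|\leq s$ by the $s$-almost cross-$t$-intersecting condition. Every other $G\in\mg$ satisfies $|G\cap F_{1}|\geq t$ and hence contains at least one $t$-subset of $F_{1}$, giving $|\mg|\leq s+\sum_{H\in\binom{F_{1}}{t}}|\mg_{H}|\leq s+\binom{k}{t}\binom{n-t}{k-t}$.

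The bound on $|\mf|$ is the main step, and is where the hypothesis $\tau_{t}(\mg)\geq k+1$ is used. My plan is to produce a small ``witness'' subfamily $\mg^{\ast}\subseteq\mg$ with $\tau_{t}(\mg^{\ast})\geq k+1$ and $|\mg^{\ast}|\leq\binom{2(k-t+1)}{k-t+1}$. Given such $\mg^{\ast}$, every $F\in\mf$ is a $k$-set and so cannot be a $t$-cover of $\mg^{\ast}$; hence some $G_{F}\in\mg^{\ast}$ satisfies $|F\cap G_{F}|<t$. The $s$-almost condition then gives $|\{F\in\mf:G_{F}=G\}|\leq|\md_{\mf}(G;t)|\leq s$ for every $G\in\mg^{\ast}$, yielding $|\mf|\leq s|\mg^{\ast}|\leq s\binom{2(k-t+1)}{k-t+1}$. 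To construct $\mg^{\ast}$ I would start with an arbitrary $G_{1}\in\mg$, pick $G_{2}\in\mg$ with $|G_{1}\cap G_{2}|<t$ (which exists because $|G_{1}|\leq k<\tau_{t}(\mg)$), and then populate $\mg^{\ast}$ with members of $\mg$ supported on a controlled neighborhood of $G_{1}\cup G_{2}$, large enough to force $\tau_{t}(\mg^{\ast})\geq k+1$ but bounded in size by the central binomial coefficient $\binom{2(k-t+1)}{k-t+1}$.

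The main obstacle will be the construction of $\mg^{\ast}$ with exactly the right size and covering number. A naive iteration of Lemma \ref{2502162} gives essentially an exponential bound of order $(k-t+1)^{k-t+1}$, which can exceed $\binom{2(k-t+1)}{k-t+1}$ once $k-t$ is moderately large, so some sharper combinatorial input will be necessary---for instance a Vandermonde-type identity summing products $\prod_{i}\binom{k-a_{i}}{t-a_{i}}$, or else a direct argument exhibiting $\mg^{\ast}$ as a subfamily of $\binom{G_{1}\cup G_{2}}{k}$ together with a verification that no $k$-subset of $[n]$ can $t$-cover every member of $\mg^{\ast}$. Once the two individual bounds on $|\mf|$ and $|\mg|$ are established, the product immediately yields the claimed inequality.
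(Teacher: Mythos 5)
Your bound $\left|\mg\right|\leq \binom{k}{t}\binom{n-t}{k-t}+s$ is correct and is exactly the paper's argument, but the main step --- $\left|\mf\right|\leq s\binom{2k-2t+2}{k-t+1}$ --- is not actually proved: you reduce it to constructing a witness subfamily $\mg^{\ast}\subseteq\mg$ with $\tau_{t}(\mg^{\ast})\geq k+1$ and $\left|\mg^{\ast}\right|\leq\binom{2k-2t+2}{k-t+1}$, and then you concede that you do not know how to build such a $\mg^{\ast}$. The fallback ideas you sketch do not close this gap: a Vandermonde-type identity is not the relevant tool, and there is no reason that members of $\mg$ lying inside $G_{1}\cup G_{2}$ should witness $t$-disjointness for every $F\in\mf$ (indeed $\mg$ may contain no such members beyond $G_{1},G_{2}$ themselves). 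So the heart of the lemma is missing.

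The missing idea is a greedy, adaptive construction combined with a skew Bollob\'as-type inequality. The paper sets $V_{1}=\mf$ and repeatedly picks $F_{i}\in V_{i}$, then (using $\tau_{t}(\mg)\geq k+1$, so the $k$-set $F_{i}$ cannot $t$-cover $\mg$) some $G_{i}\in\md_{\mg}(F_{i};t)$, and passes to $V_{i+1}=V_{i}\setminus\md_{\mf}(G_{i};t)$. This terminates with $\mf=\bigcup_{i=1}^{m}\md_{\mf}(G_{i};t)$, and the sequences satisfy $\left|F_{i}\cap G_{i}\right|<t$ and $\left|F_{i}\cap G_{j}\right|\geq t$ for $j<i$; the cited theorem of Oum and Wee then bounds the length of any such skew pair of sequences by $m\leq\binom{2k-2t+2}{k-t+1}$, giving $\left|\mf\right|\leq sm$ since each $\left|\md_{\mf}(G_{i};t)\right|\leq s$. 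Note that this sidesteps your stronger demand that $\mg^{\ast}$ itself have $t$-covering number at least $k+1$: one only needs every $F\in\mf$ to be $t$-disjoint from some chosen $G_{i}$, and choosing the $G_{i}$ adaptively against $\mf$ makes the bounded length exactly the content of the skew set-pair inequality. (A non-adaptive version of your plan can also be made to work by taking $\mg^{\ast}$ minimal with $\tau_{t}(\mg^{\ast})\geq k+1$ and applying a Bollob\'as-type bound to the pairs consisting of each $G\in\mg^{\ast}$ and a $k$-set $t$-covering $\mg^{\ast}\setminus\{G\}$ but not $G$, but some such set-pair theorem is indispensable; without it your proposal stalls exactly where you say it does.)
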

\begin{proof}

	Let $V_{1}=\mf$. Choose $F_{i}$, $G_{i}$ and $V_{i+1}$ by repeating the following steps:
	$$F_{i}\in V_{i}, \ \ G_{i}\in\md_{\mg}(F_{i};t),\ \ V_{i+1}=V_{i}\bs \md_{\mf}(G_{i};t).$$
	Since $\left|V_{i+1}\right|<\left|V_{i}\right|$, there exists a positive integer $m$ such that $V_{m+1}=\emptyset$. Finally, we get two  
	sequences of subsets $F_{1}, F_{2},\ldots, F_{m}$ and $G_{1}, G_{2},\ldots, G_{m}$ satisfy
	\begin{itemize}
		\item[(a)] $\left|F_{i}\cap G_{i}\right|<t$ for any $1\leq i\leq m$.
		\item[(b)] $\left|F_{i}\cap G_{j}\right|\geq t$ for any $1\leq j< i\leq m$.
		\item[(c)] $\mf=\cup_{i=1}^{m}\md_{\mf}(G_{i};t)$.
	\end{itemize}

	According to \cite[Theorem 6]{2406291}, two sequences of subsets $F_{1},F_{2},\ldots,F_{m}$ and $G_{1}, G_{2},\ldots,G_{m}$ satisfying  (a) and (b) must have $m\leq \binom{2k-2t+2}{k-t+1}$.	Since $\mf,\mg$ are $s$-almost cross-$t$-intersecting,
	by (c), we have
	$$\left|\mf\right|\leq \sum_{i=1}^{m}\left|\md_{\mf}(G_{i};t)\right|\leq s \binom{2k-2t+2}{k-t+1}.$$

	Let $F\in\mf$. Then $F$ is a $t$-cover of $\mg\bs \md_{\mg}(F;t)$. We further derive  $\mg\bs \md_{\mg}(F;t)=\bigcup_{H\in\binom{F}{t}}\mg_{H}$, which implies
	\begin{equation*}
		\left|\mg\right|= \left|\mg\bs \md_{\mg}(F;t)\right|+\left|\md_{\mg}(F;t) \right|\leq \binom{k}{t}\binom{n-t}{k-t}+s.
	\end{equation*}
	This together with the upper bound for $\left|\mf\right|$ yields the desired result. 
\end{proof}

The $s$-almost cross-$t$-intersecting families $\mf$ and $\mg$ are said to be \textit{maximal} if $\mf^{\prime}=\mf$ and $\mg^{\prime}=\mg$ for any $s$-almost cross-$t$-intersecting families $\mf^{\prime}$ and $\mg^{\prime}$ with $\mf\subseteq \mf^{\prime}$ and $\mg\subseteq \mg^{\prime}$. 
We proceed by proving a property of the  minimum $t$-covers of such families.

\begin{lem}\label{2502182}
	Let $n$, $k$, $t$ and $s$ be positive integers with $k\geq t+1$ and $n\geq 2k$. Suppose that $\mf,\mg\subseteq \binom{[n]}{k}$ are maximal $s$-almost cross-$t$-intersecting families.  If $\tau_{t}(\mf)\leq k$ and $\tau_{t}(\mg)\leq k$, then $\mt_{t}(\mf)$ and $\mt_{t}(\mg)$ are cross-$t$-intersecting.
\end{lem}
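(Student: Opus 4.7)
My plan is to argue by contradiction: assume there exist $T_f \in \mt_t(\mf)$ and $T_g \in \mt_t(\mg)$ with $|T_f \cap T_g| = r < t$, and write $\tau_f := |T_f|$ and $\tau_g := |T_g|$, both at most $k$ by hypothesis. I will contradict the minimality of $T_g$ by exhibiting a member of $\mg$ that $T_g$ fails to $t$-cover.

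The key structural step is the following claim, which I would extract as an intermediate observation: every $k$-subset $H$ of $[n]$ containing $T_f$ must already belong to $\mg$. To prove it, fix such an $H$ and note that for each $F \in \mf$ the $t$-covering property of $T_f$ yields $|H \cap F| \ge |T_f \cap F| \ge t$, so $H$ is $t$-intersecting with every member of $\mf$. Inserting $H$ into $\mg$ therefore preserves the $s$-almost cross-$t$-intersecting property: no $F \in \mf$ gains a $t$-disjoint partner (its count of $t$-disjoint members in $\mg \cup \{H\}$ equals its count in $\mg$, since $H$ contributes zero), and $H$ itself has zero $t$-disjoint partners in $\mf$, which is at most $s$. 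Maximality of $(\mf, \mg)$ now forces $H \in \mg$, i.e.\ $\mh_1([n], T_f; k) \subseteq \mg$.

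With the claim in hand, I only need to produce one such $H$ that $T_g$ does not $t$-cover. Since $\tau_g \le k$ and $r \ge 0$, the hypothesis $n \ge 2k$ gives $n - \tau_f - \tau_g + r \ge k - \tau_f$, so the complement $[n] \setminus (T_f \cup T_g)$ has at least $k - \tau_f$ elements; pick a $(k - \tau_f)$-subset $B$ of it. Setting $H := T_f \cup B$, the claim yields $H \in \mg$, while
\[
H \cap T_g = (T_f \cap T_g) \cup (B \cap T_g) = T_f \cap T_g
\]
has size $r < t$, contradicting $T_g \in \mt_t(\mg)$.

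The principal obstacle is conceptual rather than computational: one must recognize that the maximality hypothesis promotes the weak fact ``$T_f$ is a $t$-cover of $\mf$'' into the much stronger inclusion $\mh_1([n], T_f; k) \subseteq \mg$. Once this is in place the contradiction is extracted from a one-line counting argument that uses only $n \ge k + \tau_g - r$, which is guaranteed by $n \ge 2k$.
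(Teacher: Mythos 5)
Your proof is correct and is essentially the paper's own argument with the roles of $\mf$ and $\mg$ interchanged: the paper extends $T_g$ to a $k$-set $U$ avoiding $T_f\setminus T_g$, uses maximality to force $U\in\mf$, and then applies the covering property of $T_f$, which is exactly your construction of $H=T_f\cup B$ forced into $\mg$, just phrased directly rather than by contradiction. The counting step $n-|T_f\cup T_g|\ge k-\tau$ and the maximality argument are identical in both.
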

\begin{proof}
	Let $T_{f}\in\mt_{t}(\mf)$ and $T_{g}\in\mt_{t}(\mg)$. According to $n-\left|T_{f}\cup T_{g}\right|\geq  k-\tau_{t}(\mg)$, there exists $U\in\binom{[n]}{k}$ such that $T_{g}\subseteq U$ and $T_{f}\cap T_{g}=T_{f}\cap U$.	
	Since $\mf$, $\mg$ are maximal and $T_{g}\subseteq U$, we know $U\in \mf$. It follows from $T_{f}\in \mt_{t}(\mf)$ and $T_{f}\cap T_{g}=T_{f}\cap U$ that $\left|T_{f}\cap T_{g}\right| \geq t$.
\end{proof}

\section{Proof of Theorem \ref{1}}\label{2503171}

Let $n$, $k$, $t$ and $s$ be positive integers with $k\geq t+1$ and $n\geq (t+1)(2(k-t+1)^{2}+7s)$. Suppose that $\mf,\mg\subseteq \binom{[n]}{k}$ are $s$-almost cross-$t$-intersecting families with the maximum product of their sizes. 
Since  $\mh_{1}([n],[t];k)$ and $\mh_{1}([n],[t];k)$ are  $s$-almost cross-$t$-intersecting, we have 
\begin{equation}\label{2506251}
	\left|\mf\right|\left|\mg\right|\geq \binom{n-t}{k-t}^{2}.
\end{equation}

Next, we show that $(\tau_{t}(\mf),\tau_{t}(\mg))=(t,t)$. Suppose for contradiction that $(\tau_{t}(\mf),\tau_{t}(\mg))\neq (t,t)$. If $\tau_{t}(\mathcal{F})\leq k$ and $\tau_{t}(\mathcal{G})\leq k$, then by   Lemma \ref{2503081}, we have
$$\left|\mf\right|\left|\mg\right|\leq f_{1}(n,k,t,s,\tau_{t}(\mf))f_{1}(n,k,t,s,\tau_{t}(\mg)).$$
This together with $\left(\tau_{t}(\mf),\tau_{t}(\mg)\right)\neq (t,t)$ and Lemma \ref{2502181} (i) yields
$$\left|\mf\right|\left|\mg\right|\leq f_{1}(n,k,t,s,t)f_{1}(n,k,t,s,t+1)<f_{1}(n,k,t,s,t)^{2}=\binom{n-t}{k-t}^{2},$$
a contradiction to (\ref{2506251}). If $\tau_{t}(\mathcal{F})\geq  k+1$ or $\tau_{t}(\mathcal{G})\geq k+1$, then by Lemma \ref{2502184}, we obtain
$$\left|\mf\right|\left|\mg\right|\leq s\binom{k}{t}\binom{2k-2t+2}{k-t+1}\binom{n-t}{k-t}+s^{2}\binom{2k-2t+2}{k-t+1}.$$
It follows from Lemma \ref{2502181} (i) and (ii) that
\begin{equation*}
	\left|\mf\right|\left|\mg\right|\leq \frac{6}{7}f_{1}(n,k,t,s,t)\binom{n-t}{k-t}+\frac{6s}{7}f_{1}(n,k,t,s,t)=\frac{6}{7}\binom{n-t}{k-t}^{2}+\frac{6s}{7}\binom{n-t}{k-t}.
\end{equation*}
This combining with 
$6s<7s(t+1)\leq n-t\leq \binom{n-t}{k-t}$
yields $\left|\mf\right|\left|\mg\right|<\binom{n-t}{k-t}^{2}$, a contradiction to (\ref{2506251}). Therefore, we have $(\tau_{t}(\mf),\tau_{t}(\mg))=(t,t)$.

Pick $T_{f}\in\mt_{t}(\mf)$ and $T_{g}\in\mt_{t}(\mg)$. Then 
$\mf\subseteq \mh_{1}([n],T_{f};k)$ and $\mg\subseteq \mh_{1}([n],T_{g};k)$.
Since $\left|\mf\right|\left|\mg\right|$ is maximum, we know  $\mf$ and $\mg$ are maximal. It follows from Lemma \ref{2502182}  that $T_{f}=T_{g}=:W$. By (\ref{2506251}), we conclude
$\mf=\mg= \mh_{1}([n],W;k)$. \hfill $\square$

\section{Proof of Theorem \ref{2}}\label{2503172}

In this section, we first derive a lower bound for the product of sizes of the families  in the assumption of Theorem \ref{2}, via the following example. This  bound serves to  rule out certain candidate extremal structures.

\begin{ex}\label{2503041}
	Let $n$, $k$, $t$, $s$ be positive integers with  $\binom{n-k-1}{k-t}\geq s$, and $X\in\binom{[n]}{k+1}$,  $W\in\binom{X}{t}$. The families 
	$$\mh_{1}([n],W;k)\bs \ma\ \ \textnormal{and}\ \ \mh_{1}([n], W;k)\cup \mb, $$
	where $\ma$ is an $\left(\binom{n-k-1}{k-t}-s\right)$-subset of $\mh_{2}(X,W;k)$  and $\mb$ is a $\min\left\{t,s\right\}$-subset of $\binom{X}{k}\bs\mh_{1}(X,W;k)$, are $s$-almost cross-$t$-intersecting but not cross-$t$-intersecting. 
\end{ex}

The  product of  sizes of families in Example \ref{2503041} is 
\begin{equation}\label{2505252}
	\left(\binom{n-t}{k-t}-\binom{n-k-1}{k-t}+s\right)\left( \binom{n-t}{k-t}+\min\left\{t,s\right\}\right)=:g_{1}(n,k,t,s).
\end{equation} 

\begin{lem}\label{2503082}
	Let $n$, $k$, $t$ and $s$ be positive integers with $k\geq t+2$ and $n\geq (t+1)^{2}(2(k-t+1)^{2}+7s)$. Suppose that $\mf,\mg\subseteq \binom{[n]}{k}$ are $s$-almost cross-$t$-intersecting families. If  $\left(\tau_{t}(\mf),\tau_{t}(\mg)\right)\notin\left\{(t,t), (t,t+1),(t+1,t) \right\}$, then $\left|\mf\right|\left|\mg\right|< g_{1}(n,k,t,s)$.
\end{lem}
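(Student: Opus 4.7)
The plan is to mirror the case analysis used for Theorem~\ref{1}, splitting on whether both $t$-covering numbers are at most $k$. In the case $\tau_{t}(\mf)\le k$ and $\tau_{t}(\mg)\le k$, I would apply Lemma~\ref{2503081} to obtain
$$\left|\mf\right|\left|\mg\right| \le f_{1}(n,k,t,s,\tau_{t}(\mf))\, f_{1}(n,k,t,s,\tau_{t}(\mg)).$$
Since $(\tau_{t}(\mf),\tau_{t}(\mg))$ avoids $\{(t,t),(t,t+1),(t+1,t)\}$, and since $f_{1}(n,k,t,s,x)$ is decreasing in $x$ under our growth assumption on $n$ (a monotonicity already exploited implicitly in the proof of Theorem~\ref{1}), the product is maximized at one of the smallest remaining pairs, namely $(t+1,t+1)$, $(t,t+2)$ or $(t+2,t)$. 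It therefore suffices to verify the two inequalities $f_{1}(n,k,t,s,t+1)^{2} < g_{1}(n,k,t,s)$ and $f_{1}(n,k,t,s,t)\, f_{1}(n,k,t,s,t+2) < g_{1}(n,k,t,s)$, which I expect to be packaged as additional items in Lemma~\ref{2502181}.

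In the remaining case $\tau_{t}(\mf)\ge k+1$ or $\tau_{t}(\mg)\ge k+1$, I would apply Lemma~\ref{2502184} to get
$$\left|\mf\right|\left|\mg\right|\le s\binom{k}{t}\binom{2k-2t+2}{k-t+1}\binom{n-t}{k-t}+s^{2}\binom{2k-2t+2}{k-t+1}.$$
This bound grows only linearly in $\binom{n-t}{k-t}$, whereas $g_{1}(n,k,t,s)$ has leading term $\binom{n-t}{k-t}^{2}$. The comparison then proceeds exactly as in Case~2 of the proof of Theorem~\ref{1}, where Lemma~\ref{2502181} (i) and (ii) were invoked; here one would appeal to the analogous items of Lemma~\ref{2502181} tailored to $g_{1}(n,k,t,s)$ instead of $\binom{n-t}{k-t}^{2}$.

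The main obstacle is the sub-case $(\tau_{t}(\mf),\tau_{t}(\mg))=(t+1,t+1)$ of the first case. Here $f_{1}(n,k,t,s,t+1)^{2}$ contributes a leading term of order $(k-t+1)^{2}(t+1)^{2}\binom{n-t-1}{k-t-1}^{2}$, while the positive slack between $g_{1}(n,k,t,s)$ and $\binom{n-t}{k-t}^{2}$ consists of corrections of order $(s+\min\{t,s\})\binom{n-t}{k-t}-\binom{n-k-1}{k-t}\binom{n-t}{k-t}$. Verifying that the former is smaller requires careful use of the strengthened hypothesis $n\ge(t+1)^{2}(2(k-t+1)^{2}+7s)$, whose extra factor of $(t+1)$ relative to Theorem~\ref{1} is precisely what absorbs the additional $(t+1)^{2}$ coming from $\binom{t+1}{t}^{2}$ in $f_{1}(n,k,t,s,t+1)^{2}$. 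Everything else reduces to routine numerical estimates that I expect to find collected in Section~\ref{2504132}.
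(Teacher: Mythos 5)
Your proposal follows the paper's proof essentially verbatim: in the case $\tau_{t}(\mf)\le k$ and $\tau_{t}(\mg)\le k$ it applies Lemma \ref{2503081} together with the monotonicity of $f_{1}$ from Lemma \ref{2502181}~(i) to reduce to the two comparisons $f_{1}(n,k,t,s,t+1)^{2}<g_{1}(n,k,t,s)$ and $f_{1}(n,k,t,s,t)f_{1}(n,k,t,s,t+2)<g_{1}(n,k,t,s)$, and in the remaining case it uses Lemma \ref{2502184} with the estimates of Lemma \ref{2502181}~(i)--(ii) adapted to $g_{1}$; the only difference is organizational, since the needed inequalities are collected in Lemma \ref{2503091}~(i)--(iii) of \cref{2504132} rather than as extra items of Lemma \ref{2502181}. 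Your identification of why the strengthened hypothesis $n\ge(t+1)^{2}(2(k-t+1)^{2}+7s)$ is needed for the $(t+1,t+1)$ sub-case is also consistent with the paper's computations.
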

\begin{proof}
	Based on the $t$-covering numbers of $\mf$ and $\mg$,	we divide our proof into following cases.

	\medskip
	\noindent \textbf{Case 1.} $\tau_{t}(\mathcal{F})\leq k$ and $\tau_{t}(\mathcal{G})\leq k$. 
	\medskip
	
	Since $\left(\tau_{t}(\mf),\tau_{t}(\mg)\right)\notin\left\{(t,t), (t,t+1),(t+1,t) \right\}$,  one of the following  holds: $\tau_{t}(\mf)\geq t+1$ and $\tau_{t}(\mg)\geq t+1$, or $\tau_{t}(\mf)=t $ and $\tau_{t}(\mg)\geq t+2$, or 
	$\tau_{t}(\mf)\geq t+2 $ and $\tau_{t}(\mg)= t$.

	\medskip
	\noindent \textbf{Case 1.1.} $\tau_{t}(\mf)\geq t+1$ and $\tau_{t}(\mg)\geq t+1$. 
	\medskip
	
	By Lemma \ref{2503081}  and Lemma \ref{2502181} (i), we have 	$\left|\mf\right|\left|\mg\right|\leq f_{1}(n,k,t,s,t+1)^{2}$.
	This together with Lemma \ref{2503091} (i) yields the desired result.

	\medskip
	\noindent \textbf{Case 1.2.} $\tau_{t}(\mf)=t $ and $\tau_{t}(\mg)\geq t+2$, or $\tau_{t}(\mf)\geq t+2 $ and $\tau_{t}(\mg)= t$.
	\medskip

	From  Lemma \ref{2503081} and Lemma \ref{2502181} (i), we derive $\left|\mf\right|\left|\mg\right|\leq f_{1}(n,k,t,s,t)f_{1}(n,k,t,s,t+2)$. 
	It follows from Lemma \ref{2503091} (ii) that
    the required result holds.

	\medskip
	\noindent \textbf{Case 2.} $\tau_{t}(\mathcal{F})\geq  k+1$ or $\tau_{t}(\mathcal{G})\geq k+1$. 
	\medskip
	
	 By Lemma \ref{2502184}, we have
	$\left|\mf\right|\left|\mg\right|\leq s\binom{k}{t}\binom{2k-2t+2}{k-t+1}\binom{n-t}{k-t}+s^{2}\binom{2k-2t+2}{k-t+1}.$
	It follows from Lemma \ref{2502181} (i) and (ii) that 
	\begin{equation*}
		\begin{aligned}
			\left|\mf\right|\left|\mg\right|&\leq \frac{6}{7(t+1)}f_{1}(n,k,t,s,t+1)\binom{n-t}{k-t}+\frac{6s}{7(t+1)}f_{1}(n,k,t,s,t+1)\\
			&= \frac{6}{7(t+1)}f_{1}(n,k,t,s,t+1)\left(\binom{n-t}{k-t}+s\right).
		\end{aligned}
	\end{equation*}
This combining with Lemma \ref{2503091} (iii) implies the required result.
\end{proof}

In order to prove Theorem \ref{2}, we now analyze $s$-almost cross-$t$-intersecting families $\mf$ and $\mg$ with $(\tau_{t}(\mathcal{F}), \tau_{t}(\mathcal{G})) \in \{ (t,t+1), (t+1,t) \}$.  Specifically, we consider such families under some necessary conditions imposed by  the extremal structure.

\begin{as}\label{2506061}
Let $n$, $k$, $t$ and $s$ be positive integers with $k\geq t+2$ and $n\geq (t+1)^{2}(2(k-t+1)^{2}+7s)$. Suppose that $\mf,\mg\subseteq \binom{[n]}{k}$ are maximal $s$-almost cross-$t$-intersecting families with $\left(\tau_{t}(\mf),\tau_{t}(\mg)\right)=(t,t+1)$, but they are not cross-$t$-intersecting. Set $W=\bigcup_{T_{f}\in\mt_{t}(\mf)}T_{f}$ and $X=\bigcup_{T_{g}\in \mt_{t}(\mg)}T_{g}$. 
\end{as}

\begin{lem}\label{2503083}
	Let $n$, $k$, $t$, $s$, $\mf$, $\mg$, $W$ and $X$ be as in Assumption \ref{2506061}.  Then the following hold.
	\begin{enumerate}[\normalfont(i)]
			\item  $W\in\binom{X}{t}$.
			\item For each $G\in \mg\bs\mg_{W}$, we have $X\subseteq W\cup G\in\binom{[n]}{k+1}$.
			\item For each $H\in\mh_{1}([n], W;k)$ with $\md_{\mg}(H;t)\neq \emptyset$, we have $H\cap X=W$. 
		\end{enumerate}
\end{lem}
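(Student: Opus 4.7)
The plan is to invoke Lemma \ref{2502182}: since $\mf$ and $\mg$ are maximal with $\tau_t(\mf)=t$ and $\tau_t(\mg)=t+1$ (both $\le k$ as $k\ge t+2$), the minimum $t$-cover families $\mt_t(\mf)$ and $\mt_t(\mg)$ are cross-$t$-intersecting. Since $|T_f|=t$ and $|T_g|=t+1$ for any $T_f\in\mt_t(\mf)$ and $T_g\in\mt_t(\mg)$, the inequality $|T_f\cap T_g|\ge t$ collapses to the containment $T_f\subseteq T_g$. This single observation drives the whole proof.

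For (i), the inclusion $W\subseteq X$ is immediate, since each $T_f$ sits inside every $T_g$, hence inside $X$. To force $|W|=t$, I argue by contradiction. If $|W|\ge t+1$, then there exist distinct $T_f,T_f'\in\mt_t(\mf)$, and $T_f\cup T_f'\subseteq T_g$ together with $|T_f\cup T_f'|=t+1=|T_g|$ forces $T_f\cup T_f'=T_g$. This determines $T_g$ uniquely, so $|\mt_t(\mg)|=1$, $W\subseteq T_g$, and a size comparison gives $W=T_g=X$. But then $W\subseteq F$ for every $F\in\mf$ (since each $T_f\subseteq F$), and $W=T_g$ is a $t$-cover of $\mg$, so $|F\cap G|\ge|W\cap G|\ge t$ for all $F\in\mf$ and $G\in\mg$, contradicting the assumption that $\mf$ and $\mg$ are not cross-$t$-intersecting.

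For (ii), I fix $G\in\mg\setminus\mg_W$, so $|W\cap G|<t$. Each $T_g\in\mt_t(\mg)$ contains $W$ and has size $t+1$, so $T_g=W\cup\{x_g\}$ for a unique $x_g\notin W$. The cover inequality $|T_g\cap G|\ge t$ combined with $|W\cap G|\le t-1$ forces both $x_g\in G$ and $|W\cap G|=t-1$. Taking the union over $T_g$ then yields $X\setminus W\subseteq G$, hence $X\subseteq W\cup G$, while $|W\cup G|=|W|+|G|-|W\cap G|=k+1$.

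For (iii), suppose $H\in\mh_1([n],W;k)$ admits some $G\in\md_\mg(H;t)$. Since $W\subseteq H$, we have $|W\cap G|\le|H\cap G|<t$, so $G\in\mg\setminus\mg_W$ and (ii) applies, giving $|W\cap G|=t-1$ and $X\setminus W\subseteq G$. If some $x\in X\setminus W$ were also in $H$, then $(W\cap G)\cup\{x\}\subseteq H\cap G$ with the left-hand side of size $t$, contradicting $|H\cap G|<t$. Hence $H\cap(X\setminus W)=\emptyset$, and combined with $W\subseteq H\cap X$ this yields $H\cap X=W$. The main obstacle is part (i): one must use both the maximality of the families (accessed through Lemma \ref{2502182}) and the hypothesis that $\mf,\mg$ are not cross-$t$-intersecting in order to rule out the degenerate case $|W|=t+1$. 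Once (i) is secured, parts (ii) and (iii) are essentially bookkeeping with the cover inequality $|T_g\cap G|\ge t$.
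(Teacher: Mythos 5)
Your proposal is correct and follows essentially the same route as the paper: Lemma \ref{2502182} together with $\tau_{t}(\mf)=t$ and $\tau_{t}(\mg)=t+1$ yields $T_{f}\subseteq T_{g}$ for all minimum covers, and your arguments for (ii) and (iii) are the same bookkeeping with the cover inequality. The only variation is in (i), where the paper squeezes $t\geq \left|F\cap T_{g}\right|\geq \left|W\right|\geq t$ using a witness $F\in\mf$ with $\md_{\mg}(F;t)\neq \emptyset$, whereas you rule out $\left|W\right|\geq t+1$ by contradiction; both hinge on the same ingredients (maximality via Lemma \ref{2502182} and the not-cross-$t$-intersecting hypothesis), so this is a cosmetic rather than substantive difference.
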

\begin{proof} 
	For each $F\in\mf$, by $\tau_{t}(\mf)=t$, we obtain $W\subseteq F$.
	From Lemma \ref{2502182}, we know that $\mt_{t}(\mf)$ and $\mt_{t}(\mg)$ are cross-$t$-intersecting.
	It follows from $\tau_{t}(\mf)=t$ that $W\subseteq T_{g}$ for any $T_{g}\in\mg$.

	(i)   Let $F\in\mf$ with $\md_{\mg}(F;t)\neq \emptyset$, and $T_{g}\in \mt_{t}(\mg)$. It follows that 
	$t\geq \left|F\cap T_{g}\right|\geq \left|W \right|\geq t$, which implies $W=F\cap T_{g}\in\binom{T_{g}}{t}\subseteq \binom{X}{t}$. 

	(ii)  From (i), we know $\left|W\right|=t$.  For any $G\in\mg\bs\mg_{W}$ and $T_{g}\in\mt_{g}$, since $W\subseteq T_{g}$  and $\left|T_{g}\cap G\right|\geq t$,
	we have  
	$\left|W\cap G\right|=t-1$ and $\left|T_{g}\cap (W\cup G)\right|\geq t+1$, implying that $\left|W\cup G\right|=k+1$ and $T_{g}\subseteq W\cup G$. Hence $X\subseteq W\cup G\in\binom{[n]}{k+1}$.

	(iii)  Let $H\in\mh_{1}([n],W;k)$ with $\md_{\mg}(H;t)\neq \emptyset$, and $G\in\md_{\mg}(H;t)$. Then $G\in \mg\bs\mg_{W}$. By (ii), we have  $X\subseteq W\cup G\in\binom{[n]}{k+1}$, which implies 
	\begin{equation*}
			\begin{aligned}
					t-1\geq \left|H\cap G\right|=\left|H\cap G\cap \left( W\cup G\right)\right|\geq \left|H\cap\left( W\cup G\right)\right|+\left|G\right|-\left| W\cup G\right|\geq \left|H\cap X\right|-1.
				\end{aligned}
		\end{equation*}
	This together with $W\subseteq H\cap X$ and $\left|W\right|=t$ yields the desired result. 
\end{proof}

For positive integers $n$, $k$, $t$, $s$ and $x$, write
\begin{align}
	f_{2}(n,k,t,s)&=\binom{n-t-1}{k-t-1}+(k-t)(k-t+1)\binom{n-t-2}{k-t-2}+s(k-t+1), \label{2506122} \\ 
	f_{3}(n,k,t,s,x)&=\binom{n-t}{k-t}-\binom{n-x}{k-t}+ (k-x+1)^{2}\binom{n-t-2}{k-t-2}+2s. \label{2505253}
\end{align}

\begin{lem}\label{2504066}
	Let $n$, $k$, $t$, $s$, $\mf$, $\mg$, $W$ and $X$ be as in Assumption \ref{2506061}.  Then the following hold.
	\begin{enumerate}[\normalfont(i)]
			\item $\left|\mg\right|\leq \binom{n-t}{k-t}+t(k-t)\binom{n-\left|X\right|}{k-\left|X\right|}+s$.
			\item If $\left|\mt_{t}(\mg)\right|=1$, then $\left|\mf\right|\left|\mg\right|<g_{1}(n,k,t,s)$. 
		\end{enumerate}
\end{lem}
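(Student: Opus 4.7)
The plan is to prove (i) via the decomposition $\mg=\mh_{1}([n],W;k)\sqcup(\mg\bs\mg_{W})$ combined with a bound on the remainder, and then derive (ii) by pairing (i) with a symmetric bound on $|\mf|$ and exploiting the rigidity forced by $|\mt_{t}(\mg)|=1$.

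For (i), my first step is to argue $\mh_{1}([n],W;k)\subseteq\mg$: by Lemma \ref{2503083}(i), $\mt_{t}(\mf)=\{W\}$, so every $H\in\mh_{1}([n],W;k)$ satisfies $\md_{\mf}(H;t)=\emptyset$, and maximality of $\mg$ forces $H\in\mg$. This contributes $\binom{n-t}{k-t}$ to $|\mg|$. For $\mg\bs\mg_{W}$, Lemma \ref{2503083}(ii) gives the form $G=(W\bs\{w_{G}\})\sqcup(X\bs W)\sqcup G''$ with $G''\in\binom{[n]\bs X}{k+1-|X|}$. Since $\mf,\mg$ are not cross-$t$-intersecting, $\mf\cap\mh_{2}(X,W;k)\ne\emptyset$; picking $F^{*}=W\cup F''^{*}$ in it, the constraint $|\md_{\mg}(F^{*};t)|\leq s$ gives that at most $s$ members of $\mg\bs\mg_{W}$ satisfy $G''\cap F''^{*}=\emptyset$. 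For the remaining members, $G''$ contains an element of $F''^{*}$; a union bound over the $k-t$ elements of $F''^{*}$ bounds the number of such $G''$ by $(k-t)\binom{n-|X|-1}{k-|X|}\leq(k-t)\binom{n-|X|}{k-|X|}$ per fixed $w_{G}$, and summing over the $t$ choices of $w_{G}$ yields $|\mg\bs\mg_{W}|\leq t(k-t)\binom{n-|X|}{k-|X|}+s$.

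For (ii), $|\mt_{t}(\mg)|=1$ forces $|X|=t+1$, specializing (i) to $|\mg|\leq\binom{n-t}{k-t}+t(k-t)\binom{n-t-1}{k-t-1}+s$. The symmetric argument --- choosing $G^{*}\in\mg\bs\mg_{W}$ (non-empty by hypothesis) and using $|\md_{\mf}(G^{*};t)|\leq s$ together with the inclusion $\mh_{1}([n],W;k)\bs\mh_{2}(X,W;k)\subseteq\mf$ (a maximality consequence of Lemma \ref{2503083}(iii)) --- yields $|\mf|\leq\binom{n-t}{k-t}-\binom{n-k-1}{k-t}+s$. The naive product of these two bounds exceeds $g_{1}(n,k,t,s)$, so the final step exploits the structural rigidity of $|\mt_{t}(\mg)|=1$: the same cover analysis used in Lemma \ref{2503083} shows that $\bigcap_{G\in\mg\bs\mg_{W}}(G\bs X)=\emptyset$ in $[n]\bs X$, forcing $\mg\bs\mg_{W}$ to contain at least two members with distinct $G''$-projections; applying $|\md_{\mf}(\cdot;t)|\leq s$ against each such $G^{*}$ tightens $|\mf\cap\mh_{2}(X,W;k)|$ by further restricting admissible $F''$, and combining this with (i) via the binomial inequalities in Section \ref{2504132} gives $|\mf||\mg|<g_{1}(n,k,t,s)$.

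The main obstacle is the coupled estimate in (ii): the independent upper bounds on $|\mf|$ and $|\mg|$ cannot both be sharp simultaneously, and precisely quantifying this coupling via the \emph{spread} of $\mg\bs\mg_{W}$ forced by $|\mt_{t}(\mg)|=1$ is the technical heart of the argument.
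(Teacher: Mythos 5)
Your part (i) is correct and is essentially the paper's own argument: the same split $\mg=\mg_{W}\cup(\mg\bs\mg_{W})$, the same use of Lemma \ref{2503083}(ii)--(iii) through a member $A\in\mf$ with $A\cap X=W$ (your $F^{*}$), the same exceptional set of at most $s$ members $t$-disjoint from $A$, and the same $t(k-t)$ count; the extra observation $\mh_{1}([n],W;k)\subseteq\mg$ is true by maximality but not needed for an upper bound.

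Part (ii) has a genuine gap in the final step. Your ``rigidity'' from $\left|\mt_{t}(\mg)\right|=1$ (no common element of the sets $G\bs X$, hence two members of $\mg\bs\mg_{W}$ with distinct traces outside $X$) only improves the one-set bound $\left|\mf\right|\leq\binom{n-t}{k-t}-\binom{n-k-1}{k-t}+s$ to at best $\left|\mf\right|\leq\binom{n-t}{k-t}-2\binom{n-k-1}{k-t}+\binom{n-k-2}{k-t}+2s$, i.e.\ a gain of only $\binom{n-k-2}{k-t-1}\leq\binom{n-t-1}{k-t-1}$. Pairing this with the bound from (i), $\left|\mg\right|\leq\binom{n-t}{k-t}+t(k-t)\binom{n-t-1}{k-t-1}+s$, the dominant excess of the product over $g_{1}(n,k,t,s)$ is of order $\binom{n-t-1}{k-t-1}^{2}\bigl(t(k-t)(k-t+1)-\tfrac{n-t}{k-t}\bigr)$: the slack $t(k-t)\binom{n-t-1}{k-t-1}$ in the $\left|\mg\right|$ bound can only be absorbed if roughly $n\gtrsim t(k-t)^{3}$, which the hypothesis $n\geq(t+1)^{2}(2(k-t+1)^{2}+7s)$ does not supply (take $t=1$ and $k$ large), and no inequality in \cref{2504132} covers your pairing. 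The missing idea is how the paper exploits uniqueness of the cover: for every $H\in\binom{W\cup B}{t+1}$ with $W\subseteq H$ and $H\neq X$ (where $B\in\mg\bs\mg_{W}$), $H$ is \emph{not} a $t$-cover of $\mg$, so Lemma \ref{2502162} combined with Lemma \ref{2410176} forces $\left|\mf_{H}\right|\leq(k-t+1)\binom{n-t-2}{k-t-2}+s$; since every $F\in\mf\bs\md_{\mf}(B;t)$ meets $W\cup B$ in at least $t+1$ points, this yields $\left|\mf\right|\leq f_{2}(n,k,t,s)$, which is of order $\binom{n-t-1}{k-t-1}$ --- smaller than your bound by a factor of roughly $k-t$ --- and then Lemma \ref{2503091}(iv) finishes the comparison with $g_{1}(n,k,t,s)$. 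Without a bound on $\left|\mf\right|$ of that strength, the coupling you describe does not close.
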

\begin{proof}
	Let $A\in\mf$ and $B\in\mg$ with $\left|A\cap B\right|<t$. Then $B\in\mg\bs\mg_{W}$ due to $\md_{\mf}(B;t)\neq \emptyset$, and $W\in\binom{X}{t}$ due to Lemma \ref{2503083} (i). 

	(i) If $ \mg= \mg_{W}\cup \md_{\mg}(A;t)$, then $\left|\mg \right|\leq \binom{n-t}{k-t}+s$, the required result holds. Next we assume that $\mg\bs\left( \mg_{W}\cup \md_{\mg}(A;t)\right)\neq \emptyset$.

	For each $G\in \mg\bs\left( \mg_{W}\cup \md_{\mg}(A;t)\right)$, 
	by $W\subseteq X$ and  Lemma \ref{2503083} (ii), we get 
	$W\cup G\subseteq X\cup G \subseteq W\cup G$ and $\left|W\cup G\right|=k+1$, implying that $\left|G\cap X\right|=\left|X\right|-1$ and $\left|G\cap W\right|=t-1$. It follows from Lemma \ref{2503083} (iii) that 
	$$\left| G\cap \left(  A\cup X\right)\right|=\left| G\cap A\right|+\left| G\cap X\right|-\left|G\cap W\right|\geq \left|X\right|,$$
	which implies $\left|X\right|\leq k$.
	Therefore, we have
	$$\mg\bs\left( \mg_{W}\cup \md_{\mg}(A;t)\right)\subseteq \bigcup_{H\in\binom{A\cup X}{\left|X\right|},\ \left|H\cap X\right|=\left|X\right|-1,\ W\nsubseteq H} \mg_{H}.$$
	Note that 
	$$\left| \left\{H\in\binom{A\cup X}{\left|X\right|}:\ \left|H\cap X\right|=\left|X\right|-1,\ W\nsubseteq H \right\}\right|=t(k-t).$$
	We further conclude 
	$\left|\mg\bs\left( \mg_{W}\cup \md_{\mg}(A;t)\right)\right|\leq t(k-t)\binom{n-\left|X\right|}{k-\left|X\right|}$. This yields   $$\left|\mg\right|\leq \binom{n-t}{k-t}+ t\left(k-t\right)\binom{n-\left|X\right|}{k-\left|X\right|}+s$$
	due to $\mg=\mg_{W}\cup\left(\mg\bs\left( \mg_{W}\cup \md_{\mg}(A;t)\right)\right) \cup \md_{\mg}(A;t)$.

	(ii) Since $\mt_{t}(\mg)$ has the unique member, we have $\mt_{t}(\mg)=\left\{X\right\}$.  From Lemma \ref{2503083} (ii), we obtain $X\subseteq W\cup B\in\binom{[n]}{k+1}$.

	For each $F\in\mf\bs\md_{\mf}(B;t)$, since $W\subseteq F$ and $\left|W\cap B\right|=t-1$, we have $\left|F\cap \left(W\cup B\right)\right|\geq t+1$. Hence 
	\begin{equation}\label{2504061}
			\mf\bs\md_{\mf}(B;t)\subseteq \mf_{X}\cup \left(\bigcup_{H\in\binom{W\cup B}{t+1},\ W\subseteq F,\ H\neq X}\mf_{H}\right).
		\end{equation}
	Let $H\in\binom{W\cup B}{t+1}$ with $W\subseteq H$ and $H\neq X$. There exists $G\in\mg$ such that $\left|H\cap G\right|<t$ due to $\mt_{t}(\mg)=\left\{X\right\}$. It follows from Lemma \ref{2502162} that $\left|\mf_{H}\right|\leq (k-t+1)^{\left|R\right|-\left|H\right|}\left|\mf_{R}\right|+s$ for some set $R$ with $H\subsetneq R$.  This together with Lemma \ref{2410176} yields $\left|\mf_{H}\right|\leq (k-t+1)\binom{n-t-2}{k-t-2}+s$. By (\ref{2504061}), we get
	$\left|\mf\right|\leq f_{2}(n,k,t,s)$.

	From (i) and $\left|X\right|=t+1$, we have $\left|\mg\right|\leq \binom{n-t}{k-t}+t(k-t)\binom{n-t-1}{k-t-1}+s$. 
	It follows from  $\left|\mf\right|\leq f_{2}(n,k,t,s)$ and  Lemma \ref{2503091} (iv) that $\left|\mf\right|\left|\mg\right|<g_{1}(n,k,t,s)$.
\end{proof}

\begin{lem}\label{2506062}
		Let $n$, $k$, $t$, $s$, $\mf$ and $\mg$ be as in Assumption \ref{2506061}. If  $\left|\mf\right|\left|\mg\right|\geq g_{1}(n,k,t,s)$, then there exist $X\in\binom{[n]}{k+1}$ and $W\in\binom{X}{t}$ such that 
		$$\mf=\mh_{1}([n], W;k)\bs \ma\ \ \textnormal{and}\ \ \mg=\mh_{1}([n], W;k)\cup \mb,$$
		where $\ma$ is an $\left(\binom{n-k-1}{k-t}-s\right)$-subset of $\mh_{2}(X,W;k)$ and $\mb$ is a $\min\left\{t,s\right\}$-subset of $\binom{X}{k}\bs\mh_{1}(X,W;k)$.
\end{lem}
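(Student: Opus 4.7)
The plan is to pin down $|X|$ and then read off the structure.

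First, Lemma \ref{2504066}(ii) combined with $|\mf||\mg| \ge g_1(n,k,t,s)$ rules out $|\mt_t(\mg)| = 1$. Since $|W| = t = \tau_t(\mf)$ forces $\mt_t(\mf) = \{W\}$, Lemma \ref{2502182} implies every $T_g \in \mt_t(\mg)$ contains $W$ and hence has the form $W \cup \{v\}$; with $|\mt_t(\mg)| \ge 2$ this yields $|X| \ge t+2$, while Lemma \ref{2503083}(ii) gives $|X| \le k+1$.

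The crux is $|X| = k+1$. Fix $A \in \mf$ and $B \in \mg$ with $|A \cap B| < t$; by Lemma \ref{2503083}, $A \cap X = W$ and $X \subseteq W \cup B \in \binom{[n]}{k+1}$. Setting $\mh^* := \mh_2(X,W;k) \cap \mf$, the condition $|\md_\mf(B;t)| \le s$ (using that $F \in \mh^*$ is $t$-disjoint with $B$ iff $F \setminus W \subseteq [n] \setminus (W \cup B)$, a set of size $n-k-1$) gives
\[
|\mh^*| \le \binom{n-|X|}{k-t} - \binom{n-k-1}{k-t} + s,
\]
and hence $|\mf| \le \binom{n-t}{k-t} - \binom{n-k-1}{k-t} + s$. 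Paired with the $|\mg|$-bound from Lemma \ref{2504066}(i), the resulting product bound is still too loose when $|X| \le k$, so a joint refinement is needed: applying $|\md_\mg(F;t)| \le s$ to each $F \in \mh^*$ gives a double-count inequality of the shape $(|\mg \setminus \mg_W| - t(k-t)) |\mh^*| \le |\mg \setminus \mg_W| \cdot s$, which shows that $|\mh^*|$ and $|\mg \setminus \mg_W|$ cannot simultaneously approach their individual maxima. Substituting the optimized bounds into $|\mf||\mg|$ and applying the technical inequalities of Section \ref{2504132} shows $|\mf||\mg| < g_1(n,k,t,s)$ for every $t+2 \le |X| \le k$, a contradiction. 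Therefore $|X| = k+1$.

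Once $|X| = k+1$, Lemma \ref{2503083}(ii) forces $W \cup G = X$, so $G \subseteq X$, for every $G \in \mg \setminus \mg_W$; in particular $\mg \setminus \mg_W \subseteq \binom{X}{k} \setminus \mh_1(X,W;k)$, a set of size $t$. Maximality of $\mg$ gives $\mh_1([n],W;k) \subseteq \mg$: any $K \in \mh_1([n],W;k)$ contains $W$ and so $t$-intersects every $F \in \mf \subseteq \mh_1([n],W;k)$, making $K$ admissible. Hence $\mg = \mh_1([n],W;k) \cup \mb$ with $\mb \subseteq \binom{X}{k} \setminus \mh_1(X,W;k)$. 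Dually, Lemma \ref{2503083}(iii) together with maximality of $\mf$ shows that every $F \in \mh_1([n],W;k) \setminus \mh_2(X,W;k)$ lies in $\mf$, so $\ma := \mh_1([n],W;k) \setminus \mf \subseteq \mh_2(X,W;k)$. Finally, every $F \in \mh_2(X,W;k) \cap \mf$ is $t$-disjoint with every $G \in \mb$, so the $s$-almost condition in both directions forces $|\mb| \le s$ and $|\mh_2(X,W;k) \cap \mf| \le s$, i.e.\ $|\ma| \ge \binom{n-k-1}{k-t} - s$; combined with $|\binom{X}{k} \setminus \mh_1(X,W;k)| = t$ this gives $|\mb| \le \min\{t,s\}$. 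Maximality makes both inequalities tight, yielding $|\mb| = \min\{t,s\}$ and $|\ma| = \binom{n-k-1}{k-t} - s$.

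The hard part is the $|X| = k+1$ step: the bounds from Lemma \ref{2504066}(i) and the single-sided bound on $|\mf|$ individually do not beat $g_1$ when $|X| \le k$, and closing the gap requires the two-sided double counting described above together with the estimates of Section \ref{2504132}.
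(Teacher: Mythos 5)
Your end-game is fine and even slightly cleaner than the paper's: once $X=W\cup G\in\binom{[n]}{k+1}$ for all $G\in\mg\bs\mg_{W}$ is known, your use of maximality (rather than the product bound) to get $\mg_{W}=\mh_{1}([n],W;k)$, $\ma\subseteq\mh_{2}(X,W;k)$ and the exact sizes $|\ma|=\binom{n-k-1}{k-t}-s$, $|\mb|=\min\{t,s\}$ is correct. The problem is the step you yourself call the crux. You never actually prove that $t+2\le|X|\le k$ is impossible. The double-count inequality you invoke, $(|\mg\bs\mg_{W}|-t(k-t))|\mh^{*}|\le s|\mg\bs\mg_{W}|$, is not derived and its shape is not justified: for a fixed $F\in\mh^{*}$ there is no reason that all but $t(k-t)$ members of $\mg\bs\mg_{W}$ are $t$-disjoint from $F$ (they could all meet $F\bs W$ through a common point outside $X$), and for a fixed $G$ the number of $F\in\mh^{*}$ that $t$-intersect $G$ is only bounded by $(k-|X|+1)\binom{n-|X|-1}{k-t-1}$, not by $t(k-t)$, once $|X|\le k-1$. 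Moreover, even if some corrected double count were available, it is vacuous when $|\mg\bs\mg_{W}|$ is small, and in that regime your one-sided bounds give at best $\bigl(\binom{n-t}{k-t}-\binom{n-k-1}{k-t}+s\bigr)\bigl(\binom{n-t}{k-t}+\min\{t,s\}\bigr)$, which equals $g_{1}(n,k,t,s)$ rather than beating it, so no contradiction follows. Finally, deferring the numerics to ``the inequalities of Section \ref{2504132}'' does not work as stated: those estimates (Lemma \ref{2503042} in particular) are tailored to the specific expressions $f_{3}(n,k,t,s,|M|)$ arising in the paper's argument, not to the quantities your sketch produces.

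For comparison, the paper does not argue on $|X|$ directly. It supposes two members $B,C\in\mg\bs\mg_{W}$ give distinct $(k+1)$-sets $W\cup B\neq W\cup C$, sets $M=(W\cup B)\cap(W\cup C)$ (so $t+2\le|M|\le k$), and observes that every $F\in\mf$ outside $\md_{\mf}(B;t)\cup\md_{\mf}(C;t)$ meets both $W\cup B$ and $W\cup C$ in at least $t+1$ points; this yields $|\mf|\le f_{3}(n,k,t,s,|M|)\le f_{3}(n,k,t,s,k)$, which together with Lemma \ref{2504066}(i) and Lemma \ref{2503042} contradicts $g_{1}$. Having shown all sets $W\cup G$ coincide, it then proves $W\cup B\subseteq X$ by checking that every $(t+1)$-set $T$ with $W\subseteq T\subseteq W\cup B$ is a minimum $t$-cover of $\mg$, which is what finally gives $X=W\cup B\in\binom{[n]}{k+1}$. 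Some argument of this kind (or a fully worked-out replacement) is needed where your sketch currently stands.
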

\begin{proof}
Set $W=\bigcup_{T_{f}\in\mt_{t}(\mf)}T_{f}$ and $X=\bigcup_{T_{g}\in \mt_{t}(\mg)}T_{g}$. 	Let $A\in\mf$ and $B\in\mg$ with $\left|A\cap B\right|<t$. Then $B\in\mg\bs\mg_{W}$ due to $\md_{\mf}(B;t)\neq \emptyset$, and $W\in\binom{X}{t}$ due to Lemma \ref{2503083} (i).  By $\left|\mf\right|\left|\mg\right|\geq g_{1}(n,k,t,s)$ and Lemma \ref{2504066} (ii), we get $\left|X\right|\geq t+2$.

	\begin{cl}\label{2503103}
		$X=W\cup G\in\binom{[n]}{k+1}$ for any $G\in\mg\bs\mg_{W}$. 
	\end{cl}
	\begin{proof}
		From Lemma \ref{2503083} (ii), we obtain $X\subseteq W\cup G\in\binom{[n]}{k+1}$ for any $G\in\mg\bs \mg_{W}$.

		We proceed by  showing  $W\cup B=W\cup G$ 	for any $G\in\mg\bs\mg_{W}$. 
		Suppose for contradiction that there exists $C\in \mg\bs\mg_{W}$ such that $W\cup B\neq W\cup C$. Let $M=\left(W\cup B\right)\cap \left(W\cup C\right)$.  By  $X\subseteq M\subsetneq W\cup B\in\binom{[n]}{k+1}$, we know  $t+2\leq \left|M\right|\leq k$. Set
		$$\mh=\left\{(H_{1}, H_{2})\in\binom{W\cup B}{t+1}\times \binom{W\cup C}{t+1}: W\subseteq H_{1}\nsubseteq M,\ W\subseteq H_{2}\nsubseteq M\right\}.$$

		For each $F\in\mf\bs\left( \md_{\mf}(B;t)\cup \md_{\mf}(C;t)\right)$, since $W\subseteq F$ and $\left|W\cap B\right|=\left|W\cap C\right|=t-1$, we have $\left|F\cap \left(W\cup B\right)\right|\geq t+1$ and $\left|F\cap \left( W\cup C\right)\right|\geq t+1$. Therefore, we get
		$$\mf\subseteq \left\{F\in\binom{[n]}{k}: W\subseteq F, \ \left|F\cap M\right|\geq t+1\right\}\cup \left(\bigcup_{(H_{1},H_{2})\in \mh}\mf_{H_{1}\cup H_{2}}\right)\cup \md_{\mf}(B;t)\cup \md_{\mf}(C;t).$$	
		Note that $\left|H_{1}\cup H_{2}\right|=t+2$	 for any $(H_{1}, H_{2})\in\mh$. Then 
		$$\left| \mf\right|\leq \binom{n-t}{k-t}-\binom{n-\left|M\right|}{k-t}+ (k-\left|M\right|+1)^{2}\binom{n-t-2}{k-t-2}+2s=f_{3}(n,k,t,s,\left|M\right|).$$
		This together with Lemma \ref{2503042} (i) yields	 	$\left|\mf\right|\leq f_{3}(n,k,t,s,k)$. From Lemma \ref{2504066} (i) and $\left|X\right|\geq t+2$, we obtain
		$$\left|\mf\right|\left|\mg\right|\leq f_{3}(n,k,t,s,k)\left( \binom{n-t}{k-t}+t(k-t)\binom{n-t-2}{k-t-2}+s\right).$$
		It follows from Lemma \ref{2503042} (ii) that $\left|\mf\right|\left|\mg\right|<g_{1}(n,k,t,s)$, a contradiction to $\left|\mf\right|\left|\mg\right|\geq g_{1}(n,k,t,s)$. Hence $W\cup B=W\cup G$ 	for any $G\in\mg\bs\mg_{W}$.
		
		Let $G\in \mg$ and  $T\in\binom{W\cup B}{t+1}$ with $W\subseteq T$. If $G\in \mg_{W}$, then $\left|T\cap G\right|\geq t$. If $G\in\mg\bs\mg_{W}$, then  by $\left|W\cup G\right|=k+1$, we have $\left|W\cap G\right|=t-1$. It follows from $W\cup G=W\cup B$ that
		$$\left| T\cap G\right|=\left|T\cap G\right|+\left|T\cap W\right|-\left|W\cap G\right|-1=\left| T\cap \left( W\cup G\right)\right|-1=\left|T\cap \left(W\cup B\right)\right|-1=t.$$
		Therefore, we have  $T\in \mt_{t}(\mg)$. We further conclude $\left\{ T\in \binom{W\cup B}{t+1}: W\subseteq T\right\}\subseteq \mt_{t}(\mg)$, which implies $W\cup B\subseteq X$. This together with  $X\subseteq W\cup B$ yields $X=W\cup B=W\cup G\in\binom{[n]}{k+1}$ for any $G\in\mg\bs\mg_{W}$.
	\end{proof}

	Note that $\mf\subseteq \mh_{1}([n],W;k)$.  To  characterize $\mf$ and $\mg$, we first describe $\mh_{1}([n],W;k)\bs \mf$ and $\mg\bs\mg_{W}$.

	By Claim \ref{2503103}, we know $X=W\cup B\in\binom{[n]}{k+1}$, which implies $\md_{\mh_{1}([n],W;k)}(B;t)=\mh_{2}(X,W;k)$.
	It follows from $n-k-1\geq k-t+s$ that  $$\left|\md_{\mh_{1}([n],W;k)}(B;t)\right|=\binom{n-k-1}{k-t}\geq \binom{k-t+s}{k-t}=\binom{k-t+s}{s}>s.$$ Hence  $\mf\subsetneq \mh_{1}([n],W;k)$.   
	Pick $H\in \mh_{1}([n],W;k)\bs \mf$. By the maximality of $\mf$ and $\mg$, we know $\md_{\mg}(H;t)\neq \emptyset$. From Lemma \ref{2503083} (iii), we obtain $H\in \mh_{2}(X, W;k)$, which implies 
	\begin{equation}\label{2505221}
		\mh_{1}([n],W;k)\bs \mf\subseteq \mh_{2}(X, W;k).
	\end{equation}
	It follows that  $$\mh_{2}(X,W;k)\bs\mf \subseteq \mh_{1}([n],W;k)\bs \mf \subseteq \mh_{2}(X,W;k)\bs\mf.$$
	Therefore, we have $\mh_{1}([n],W;k)\bs \mf=\mh_{2}(X,W;k)\bs\mf$. This combining with $\md_{\mf}(B;t)=\mh_{2}(X, W;k)\cap \mf$ yields
	\begin{equation}\label{2505222}
		\left|\mh_{1}([n],W;k)\bs \mf \right|= \left|\mh_{2}(X,W;k) \right|-\left|\md_{\mf}(B;t) \right|\geq \binom{n-k-1}{k-t}-s.
	\end{equation}

	Choose $G\in\mg\bs\mg_{W}$. By Claim \ref{2503103}, we have $G\subseteq W\cup G=X$. Hence 
	\begin{equation}\label{2505223}
		\mg\bs\mg_{W}\subseteq \binom{X}{k}\bs \mh_{1}(X,W;k).
	\end{equation}
	From Lemma \ref{2503083} (iii), we know $A\cap X=W$, which implies  $\left|A\cap G\right|=\left| A\cap X \cap G\right|=\left| W\cap G\right|<t$.
	It follows that 
	\begin{equation}\label{2505225}
		\left|\mg\bs\mg_{W}\right|\leq \min\left\{ \left|\binom{X}{k}\bs \mh_{1}(X,W;k)\right|, \left|\md_{\mg}(A;t)\right|\right\}\leq\min\left\{t,s\right\}.
	\end{equation}

	Now, according to $\left|\mf\right|\left|\mg\right|\geq g_{1}(n,k,t,s)$,  we have 
	\begin{equation*}
		\begin{aligned}
		 \left(\left|\mh_{1}([n],W;k)\right|-\left|\mh_{1}([n],W;k)\bs \mf \right|\right)\left( \left|\mg_{W}\right|+\left|\mg\bs\mg_{W}\right|\right)
			=\left|\mf\right|\left| \mg\right|\geq g_{1}(n,k,t,s).
		\end{aligned}
	\end{equation*}
	This together with (\ref{2505221})--(\ref{2505225}) implies that $\mh_{1}([n],W;k)\bs\mf$ is an $\left(\binom{n-k-1}{k-t}-s \right)$-subset of $\mh_{2}(X,W;k)$, $\mg_{W}=\mh_{1}([n],W;k)$ and $\mg\bs\mg_{W}$ is a $\min\left\{t,s\right\}$-subset of $\binom{X}{k}\bs \mh_{1}(X,W;k)$. Note that $\mf=\mh_{1}([n],W;k)\bs\left(\mh_{1}([n],W;k)\bs\mf\right)$ and $\mg=\mg_{W}\cup\left(\mg\bs\mg_{W} \right)$. Then the desired result holds.
\end{proof}

Now, we are  ready to prove Theorem \ref{2}.

\begin{proof}[\textnormal{\textbf{Proof of Theorem \ref{2}}}]
	 Note that $\mf,\mg$ are maximal, and they are not cross-$t$-intersecting. By Lemma \ref{2502182}, we know $\left(\tau_{t}(\mf),\tau_{t}(\mg)\right)\neq (t,t)$. 
	 Since $\left|\mf\right|\left|\mg\right|$ is maximum,  by Example \ref{2503041}, we have  	$\left|\mf\right|\left|\mg\right|\geq g_{1}(n,k,t,s)$.
	 It follows from $\left|\mf\right|\leq \left|\mg\right|$ and  	 
     Lemma \ref{2506062} that  $\left(\tau_{t}(\mf),\tau_{t}(\mg)\right)\neq(t+1,t)$. We further conclude that $(\tau_{t}(\mf),\tau_{t}(\mg))=(t,t+1)$ due to Lemma \ref{2503082}. 
	  Applying Lemma \ref{2506062} again, we know Theorem \ref{2} holds.
\end{proof}

\section{Proof of Theorem \ref{3}}\label{2503173}

In this section, we focus on the  $s$-almost cross-$t$-intersecting families consisting of $(t+1)$-subsets of $[n]$,  and prove Theorem \ref{3}. We start our investigation with two examples.

\begin{ex}\label{2503261}
	Let $n$, $t$, $s$ be positive integers with $\sum_{i=0}^{t-1}\binom{t+1}{i}\binom{n-t-1}{t-i+1}\geq s$, and $Y\in\binom{[n]}{t+1}$. 
	The families 
	$$\left\{Y\right\}\ \ \textnormal{and} \ \ \mm_{1}(Y;t+1,t)\cup \mc, $$
	where $\mc$ is an $s$-subset of $\binom{[n]}{t+1}\bs\mm_{1}(Y;t+1,t)$, are $s$-almost cross-$t$-intersecting but not cross-$t$-intersecting. 
\end{ex}

\begin{ex}\label{2503262}
	Let $n$, $t$, $s$ be positive integers with $n\geq t+s+2$, and $Z\in\binom{[n]}{t+s+2}$, $W\in\binom{Z}{t}$.
	The families 
	$$\mh_{1}(Z,W;t+1)\ \ \textnormal{and} \ \ \mh_{1}([n],W;t+1)\cup \md, $$
	where $\md$ is an $(s+2)$-subset of $\mm_{2}(Z,W;t)$ for which each element of $Z\bs W$ is contained in exactly two members in $\md$, are $s$-almost cross-$t$-intersecting but not cross-$t$-intersecting.
\end{ex}

The products of  sizes of families in Examples \ref{2503261} and  \ref{2503262} are
\begin{align}
		(t+1)(n-t)+s-t&=:g_{2}(n,t,s),\label{2505255}\\    
		(s+2)(n-t)+(s+2)^{2}&=:g_{3}(n,t,s), \label{2505262}
\end{align}
respectively. Then $g_{2}(n,t,s)$ and $g_{3}(n,t,s)$ are  lower bounds for the product of sizes of families  in the assumption of  Theorem \ref{3}. By applying $g_{3}(n,t,s)$, we can determine some necessary conditions that the extremal structure should satisfy.

\begin{lem}\label{2503175}
	Let $n$, $t$ and $s$ be positive integers with $n\geq 5s(t+1)^{2}$. Suppose that $\mf,\mg\subseteq \binom{[n]}{t+1}$ are $s$-almost cross-$t$-intersecting families.  If $\tau_{t}(\mf)\geq t+1$ and $\left|\mf\right|\leq \left|\mg\right|$,
	then $\left|\mf\right|\left|\mg\right|<g_{3}(n,t,s)$. 
\end{lem}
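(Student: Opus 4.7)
The plan is to split the analysis based on whether $\mf$ is $t$-intersecting. For each $F\in\mf$, the $s$-almost cross-$t$-intersecting property gives $|\md_\mg(F;t)|\le s$, hence $|\mg\setminus\md_\mg(F;t)|\ge|\mg|-s$.

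\textbf{Case 1 ($\mf$ is not $t$-intersecting).} Take $F_1,F_2\in\mf$ with $|F_1\cap F_2|\le t-1$. For any $G\in(\mg\setminus\md_\mg(F_1;t))\cap(\mg\setminus\md_\mg(F_2;t))$,
\begin{equation*}
2t\le|F_1\cap G|+|F_2\cap G|=|(F_1\cup F_2)\cap G|+|F_1\cap F_2\cap G|\le(t+1)+|F_1\cap F_2|\le 2t,
\end{equation*}
forcing equality: $|F_1\cap F_2|=t-1$, $F_1\cap F_2\subseteq G\subseteq F_1\cup F_2$, and $G$ contains exactly one element from each of $F_1\setminus F_2$ and $F_2\setminus F_1$; at most $4$ such $G$ exist. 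Inclusion-exclusion then yields $2(|\mg|-s)\le|\mg|+4$, so $|\mg|\le 2(s+2)$. Combined with $|\mf|\le|\mg|$, $|\mf||\mg|\le 4(s+2)^2<g_3(n,t,s)$, where the final inequality reduces to $3(s+2)<n-t$, implied by $n\ge 5s(t+1)^2$.

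\textbf{Case 2 ($\mf$ is $t$-intersecting).} The assumption $\tau_t(\mf)\ge t+1$ rules out $|\mf|\le 2$, so $|\mf|\ge 3$. Pick distinct $F_1,F_2\in\mf$; since $\mf$ is $t$-intersecting, $|F_1\cap F_2|=t$, and we set $C=F_1\cap F_2$, $\{a\}=F_1\setminus F_2$, $\{b\}=F_2\setminus F_1$. Since $C$ is not a $t$-cover, some $F_3\in\mf$ has $C\not\subseteq F_3$; the $t$-intersection constraints with $F_1,F_2$ force $F_3=(C\setminus\{c\})\cup\{a,b\}$ for some $c\in C$. A similar analysis shows that every $F\in\mf$ lies in $\binom{X}{t+1}$ with $X:=C\cup\{a,b\}$ of size $t+2$, so $|\mf|\le t+2$. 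Pick three distinct $F_i=X\setminus\{x_i\}$ in $\mf$; for any $G\in\bigcap_{i=1}^3(\mg\setminus\md_\mg(F_i;t))$ the requirements $|F_i\cap G|\ge t$ yield: $|X\cap G|\le t-1$ is impossible; $|X\cap G|=t$ forces $x_1,x_2,x_3\notin G$, which is impossible since $|X\setminus\{x_1,x_2,x_3\}|=t-1$; hence $|X\cap G|\ge t+1$, forcing $G\subseteq X$ (at most $t+2$ options). Thus $|\bigcap_i(\mg\setminus\md_\mg(F_i;t))|\le t+2$, yielding $|\mg|\le (t+2)+3s$. The product $|\mf||\mg|\le(t+2)((t+2)+3s)$ is $<g_3$ by elementary algebra using $n\ge 5s(t+1)^2$.

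The main obstacle is the structural identification in Case 2 that $\mf\subseteq\binom{X}{t+1}$ for some $(t+2)$-set $X$; this requires carefully chasing how the $t$-intersection constraints among three or more members of $\mf$ propagate. Once this structure is isolated, bounding $|\mg|$ via three well-chosen members and the final numerical comparison with $g_3$ are routine.
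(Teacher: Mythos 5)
Your proof is correct, but it takes a genuinely different route from the paper's. The paper fixes a single $F\in\mf$ and, for every $H\in\binom{F}{t}$, uses $\tau_{t}(\mf)\geq t+1$ to produce $F'\in\mf$ with $\left|H\cap F'\right|<t$; applying Lemma \ref{2502162} (together with Lemma \ref{2410176}) to $\mg$ then gives $\left|\mg_{H}\right|\leq s+2$, so the decomposition $\mg=\md_{\mg}(F;t)\cup\bigcup_{H\in\binom{F}{t}}\mg_{H}$ yields $\left|\mg\right|\leq (t+1)(s+2)+s$, and the conclusion follows from $\left|\mf\right|\leq\left|\mg\right|$ and Lemma \ref{2503174}(ii), i.e.\ $g_{3}(n,t,s)>\left((t+1)(s+2)+s\right)^{2}$. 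You instead split on whether $\mf$ is $t$-intersecting: if not, a two-set inclusion--exclusion gives $\left|\mg\right|\leq 2s+4$; if so, $\tau_{t}(\mf)\geq t+1$ forces $\left|\mf\right|\geq 3$ and the structural fact $\mf\subseteq\binom{X}{t+1}$ for a $(t+2)$-set $X$, whence $\left|\mf\right|\leq t+2$ and, via three members of $\mf$, $\left|\mg\right|\leq t+2+3s$. I checked the step you flag: your ``similar analysis'' does go through, but note it needs the extra case of members $F\supseteq C$ (with $C=F_{1}\cap F_{2}$), which is settled by intersecting such $F$ with $F_{3}$; likewise the final comparisons you leave as ``elementary algebra'' do hold comfortably under $n\geq 5s(t+1)^{2}$. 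What each approach buys: yours is self-contained and elementary (it bypasses Lemmas \ref{2502162}, \ref{2410176} and \ref{2503174}(ii)), gives sharper bounds, and uses $\left|\mf\right|\leq\left|\mg\right|$ only in the non-$t$-intersecting case; the paper's is shorter given its general machinery, which is reused elsewhere for arbitrary $k$ rather than only $k=t+1$.
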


\begin{proof}
    Let $F\in\mf$.  For each $H\in \binom{F}{t}$, since $\tau_{t}(\mf)\geq t+1$, we know $\left|F^{\prime}\cap H\right|<t$ for some $F^{\prime}\in\mf$. By Lemma \ref{2502162}, there exists a subset $R$ of $[n]$ with $H\subsetneq R$ such that  $\left|\mg_{H}\right|\leq 2^{\left|R\right|-t}\cdot\left|\mg_{R}\right|+s$. It follows from Lemma \ref{2410176} that $\left|\mg_{H}\right|\leq s+2$. 
    
    From $\mg=\md_{\mg}(F;t)\cup \left( \bigcup_{H\in \binom{F}{t}}\mg_{H}\right)$, we obtain $\left| \mg\right|\leq (t+1)(s+2)+s$. 	
	This combining with $\left|\mf\right|\leq \left|\mg\right|$ and Lemma \ref{2503174} (ii) implies the required result.
\end{proof}

\begin{lem}\label{2503181}
	Let $n$, $t$ and $s$ be positive integers with $n\geq 5s(t+1)^{2}$. Suppose that $\mf,\mg\subseteq \binom{[n]}{t+1}$ are $s$-almost cross-$t$-intersecting families. If $\tau_{t}(\mf)=t$ and $2\leq \left|\mf\right|\leq s+1$, then $\left|\mf\right|\left|\mg\right|< g_{3}(n,t,s)$.
\end{lem}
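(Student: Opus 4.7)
The plan is to exploit the sunflower structure forced by $\tau_t(\mf)=t$: since each $F\in\mf$ has $|F|=t+1$ and meets the minimum $t$-cover $W$ in at least $t$ elements, $W\subseteq F$, so $\mf=\{W\cup\{x_i\}:1\le i\le m\}$ for distinct petals $x_1,\ldots,x_m\in[n]\bs W$, where $m=|\mf|\in\{2,\ldots,s+1\}$. Set $X=\{x_1,\ldots,x_m\}$ and partition $\mg=\mg_W\cup\mg_1\cup\mg_2$ according to whether $|G\cap W|$ equals $t$, is at most $t-2$, or equals $t-1$.

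The first two pieces are immediate: $|\mg_W|\le n-t$, and every $G\in\mg_1$ satisfies $|G\cap F_i|\le|G\cap W|+1\le t-1$ for all $i$, so $G\in\md_{\mg}(F_1;t)$, yielding $|\mg_1|\le s$. For $G\in\mg_2$ we have $|G\bs W|=2$, and $F_i\in\md_{\mf}(G;t)$ exactly when $x_i\notin G$, so at most $2$ indices $i$ are spared and at least $m-2$ are not.

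The bound on $|\mg_2|$ splits by $m$. When $m\ge 3$, I would double-count pairs $(F_i,G)$ with $G\in\mg_2$ and $|F_i\cap G|<t$: each $G\in\mg_2$ contributes at least $m-2$ such pairs, each $G\in\mg_1$ contributes exactly $m$, and $\sum_{i=1}^{m}|\md_{\mg}(F_i;t)|\le sm$, giving $m|\mg_1|+(m-2)|\mg_2|\le sm$, hence $|\mg_1|+|\mg_2|\le\frac{sm}{m-2}$ and $|\mg|\le(n-t)+\frac{sm}{m-2}$. For $m=2$ the double count degenerates, so I would cover $\mg_2\subseteq\{G:x_1\notin G\}\cup\{G:x_2\notin G\}\cup\{G:\{x_1,x_2\}\subseteq G\}$, bounding the first two by $s$ via $|\md_{\mg}(F_i;t)|\le s$ and the third by $t$ since it consists of the sets $(W\bs\{w\})\cup\{x_1,x_2\}$; this yields $|\mg_2|\le 2s+t$ and $|\mg|\le n+3s$.

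Combining, $|\mf||\mg|\le m(n-t)+\frac{sm^2}{m-2}$ for $m\ge 3$ and $|\mf||\mg|\le 2(n+3s)$ for $m=2$. Because $m\le s+1$, the comparison against $g_3(n,t,s)=(s+2)(n-t)+(s+2)^2$ carries a slack of $(s+2-m)(n-t)\ge n-t$, and the remaining task is to show that the lower-order discrepancy $\frac{sm^2}{m-2}-(s+2)^2$ (or $2t+6s-(s+2)^2$ for $m=2$) is dominated by $n-t$ under $n\ge 5s(t+1)^2$. The main obstacle I expect is the case $m=s+1$ with $s$ large, where $\frac{sm^2}{m-2}$ grows like $s^2$ and hence genuinely requires the quadratic-in-$s$ growth of $n$; this last numerical step is precisely the kind of routine but tight inequality that I would package into an inequality lemma in Section~\ref{2504132}.
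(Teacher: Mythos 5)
Your proposal is correct and takes essentially the same route as the paper: the sunflower structure forced by $\tau_{t}(\mf)=t$, an $O(n)$ bound on the part of $\mg$ that $t$-intersects two fixed members of $\mf$ (the paper covers it by $\mh_{1}([n],W;t+1)\cup\mm_{2}(F_{1}\cup F_{2},W;t)$, which is your $\mg_{W}\cup\mg_{2}$-type bound), a separate treatment of $\left|\mf\right|=2$, and for $\left|\mf\right|\geq 3$ exactly the same double count showing that any $G$ causing a $t$-disjointness is $t$-disjoint from at least $\left|\mf\right|-2$ members of $\mf$, yielding $\left|\mg\right|\leq n+\frac{s\left|\mf\right|}{\left|\mf\right|-2}$. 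The numerical step you defer is indeed routine and true; in fact the $(s+2)^{2}$ term of $g_{3}$ already absorbs the $s^{2}$-size of $\frac{sm^{2}}{m-2}$ at $m=s+1$ (so, contrary to your side remark, no quadratic-in-$s$ growth of $n$ is needed there), and the extremes $m=3$ and $m=s+1$ are easily dominated under $n\geq 5s(t+1)^{2}$.
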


\begin{proof} 
	Let $W$ be a $t$-cover of $\mf$ with size $t$, and $F_{1}, F_{2}$ be distinct members in $\mf$. 
	For each $G\in\binom{[n]}{t+1}$ with $\left|G\cap F_{1}\right|\geq t$ and $\left|G\cap F_{2}\right|\geq t$, we know
	$$t+1\geq \left|G\cap \left( F_{1}\cup F_{2}\right)\right|=\left|G\cap F_{1}\right|+\left|G\cap F_{2}\right|-\left|G\cap W\right|\geq 2t-\left|G\cap W\right|, $$
	which implies either
	$W\subseteq G$, or $\left|G\cap W\right|=t-1$ and $G\subseteq F_{1}\cup F_{2}$. Hence
	\begin{equation*}
		\mg\bs\left(\md_{\mg}(F_{1};t)\cup \md_{\mg}(F_{2};t)\right)\subseteq \mh_{1}([n],W;t+1)\cup \mm_{2}(F_{1}\cup F_{2},W;t).
	\end{equation*}
	We further conclude that 
	\begin{equation}\label{2504021}
		\left|\mg\bs\left(\md_{\mg}(F_{1};t)\cup \md_{\mg}(F_{2};t)\right)\right|\leq n.
	\end{equation}
	
	\medskip
	\noindent \textbf{Case 1.} $\left|\mf\right|=2$. 
	\medskip
	
	Since $\left|\md_{\mg}(F_{1};t)\cup \md_{\mg}(F_{2};t)\right|\leq 2s$,
	by (\ref{2504021}), we get $\left|\mg\right|\leq n+2s$. It follows that $\left|\mf\right|\left|\mg\right|\leq 2n+4s$. This yields
	$$g_{3}(n,t,s)-\left|\mf\right|\left|\mg\right|\geq sn-\left(s+2\right)t+s^{2}+4\geq 5s^{2}(t+1)^{2}-(s+2)t >0.$$

	\medskip
	\noindent \textbf{Case 2.} $\left|\mf\right|\geq 3$. 
	\medskip

	In this case, we have $s\geq 2$. Let $\mg^{\prime}=\bigcup_{F\in\mf}\md_{\mg}(F;t)$. Next, we show 
	\begin{equation}\label{2504022}
		\left|\mg^{\prime}\right|\leq \frac{s\left|\mf\right|}{\left|\mf\right|-2}. 
	\end{equation}
	If $\mg^{\prime}=\emptyset$, then (\ref{2504022}) holds.  Now we  assume that $\mg^{\prime}\neq \emptyset$. For each 
	$G^{\prime}\in \mg^{\prime}$, we claim  $\left|\md_{\mf}(G^{\prime}; t)\right|\geq \left|\mf\right|-2$. Otherwise, there are three distinct members $F_{1}^{\prime},F_{2}^{\prime}, F_{3}^{\prime}\in\mf$ such that $\left|G^{\prime}\cap F_{i}^{\prime}\right|\geq t$ for any $i\in[3]$. Since $G^{\prime}\in\mg^{\prime}$ and  $W\subseteq F$ for any $F\in\mf$, we have $\left|G^{\prime}\cap W\right|\leq t-1$, implying that
	\begin{equation*}
		\begin{aligned}
			\left|G^{\prime}\cap \left( F_{1}^{\prime}\cup F_{2}^{\prime}\cup F_{3}^{\prime}\right)\right|=\left|G^{\prime}\cap F_{1}^{\prime}\right|+\left|G^{\prime}\cap F_{2}^{\prime}\right|+\left|G^{\prime}\cap F_{3}^{\prime}\right|-2\left|G^{\prime}\cap W\right|\geq t+2,
		\end{aligned}
	\end{equation*}
	a contradiction. By double counting, we get
	$$s\left|\mf\right|\geq \left|\left\{ \left(F, G\right)\in \mf\times \mg: \left|F\cap G\right|<t\right\}\right|\geq \left|\mg^{\prime}\right|\left( \left|\mf\right|-2\right). $$
	This yields (\ref{2504022}).
	
	Observe that $\mg=\left( \mg\bs\left(\md_{\mg}(F_{1};t)\cup \md_{\mg}(F_{2};t)\right)\right)\cup\mg^{\prime}$. It follows from  (\ref{2504021}), (\ref{2504022}) and $3\leq \left|\mf\right|\leq s+1$ that
	\begin{equation*}
		\begin{aligned}
			g_{3}(n,t,s)-\left|\mf\right|\left|\mg\right|&\geq g_{3}(n,t,s)-\left|\mf\right|\left(n+\frac{s\left|\mf\right|}{\left|\mf\right|-2}\right)\\
			&= n(s-\left|\mf\right|+2)+(s+2)^{2}-(s+2)t-\left(s\left|\mf\right|+2s+\frac{4s}{\left|\mf\right|-2} \right)\\
			&\geq 5s(t+1)^{2}+(s+2)^{2}-(s+2)t-\left(s^{2}+7s\right)\\
			&\geq \frac{5}{3}(s+2)(t+1)^{2}+4-(s+2)t-3s>0.
		\end{aligned}
	\end{equation*}
	Then the required result follows.
\end{proof}

\begin{proof}[\textnormal{\textbf{Proof of Theorem \ref{3}}}]
	Since $\left|\mf\right|\left|\mg\right|$ takes the maximum value, by Examples \ref{2503261} and \ref{2503262}, we have 
	\begin{equation}\label{2503182}
		\left|\mf\right|\left|\mg\right|\geq \max\left\{g_{2}(n,t,s), g_{3}(n,t,s)\right\}.
	\end{equation}
	It follows from Lemma \ref{2503175}  that $\tau_{t}(\mf)=t$.

	Let $W$ be a $t$-cover of $\mf$ with size $t$.   Since  $\mf$ and $\mg$ are not cross-$t$-intersecting, we know $\left|G\cap W\right|<t$  for some $G\in \mg$.   By Lemma \ref{2502162}, there exists a subset $R$ of $[n]$ with $W\subsetneq R$ such that  $\left|\mf\right|=\left|\mf_{W}\right|\leq 2^{\left|R\right|-t}\cdot\left|\mf_{R}\right|+s$. It follows from Lemma \ref{2410176} that
	\begin{equation}\label{2506031}
		\left|\mf\right|\leq s+2.
	\end{equation}
	In the following, we investigate $\mf$ and $\mg$ into two cases.

	\medskip
	\noindent \textbf{Case 1.} $\left|\mf\right|=1$. 
	\medskip
	
	Set $\mf=\left\{Y\right\}$. Then $\mg\subseteq \mm_{1}(Y;t+1,t)\cup \md_{\mg}(Y;t)$.  By (\ref{2503182}), we know $\left|\mf\right|\left|\mg\right|\geq g_{2}(n,t,s)$, implying that 
	$\mg=\mm_{1}(Y;t+1,t)\cup \md_{\mg}(Y;t)$ and   $\md_{\mg}(Y;t)$ is an $s$-subset of $\binom{[n]}{t+1}\bs\mm_{1}(Y;t+1,t)$.

	\medskip
	\noindent \textbf{Case 2.} $\left|\mf\right|\geq 2$. 
	\medskip

	From (\ref{2503182}), we know $\left|\mf\right|\left|\mg\right|\geq g_{3}(n,t,s)$. It follows from Lemma \ref{2503181} and (\ref{2506031}) that $\left|\mf\right|=s+2$. Write $Z=\bigcup_{F\in\mf}F$. Then $\left|Z\right|=t+s+2$ and $\mf=\mh_{1}(Z,W;t+1)$. 
	
	For each $G\in\mg\bs\mg_{W}$, since $\left|\md_{\mf}(G;t)\right|\leq s$, there exist distinct $F_{1}, F_{2}\in\mf$ such that $\left|G\cap F_{1}\right|\geq t$ and $\left|G\cap F_{2}\right|\geq t$. By $F_{1}\cap F_{2}=W$, we have
	$$t+1\geq \left|G\cap \left( F_{1}\cup F_{2}\right)\right|=\left|G\cap F_{1}\right|+\left| G\cap F_{2}\right|-\left| G\cap W\right|\geq 2t-\left| G\cap W\right|,$$
	implying that $\left|G\cap W\right|=t-1$  and $G\subseteq F_{1}\cup F_{2}\subseteq Z$. Therefore, we know 
	\begin{equation}\label{2505226}
		\mg\bs\mg_{W}\subseteq \mm_{2}(Z,W;t). 
	\end{equation}
	It follows from $\mf=\mh_{1}(Z,W;t+1)$ that $\left|\md_{\mf}(G;t)\right|=s$ for any $G\in\mg\bs\mg_{W}$.  Hence
	\begin{equation}\label{2504271}
		s(s+2)\geq \sum_{F\in \mf}\left|\md_{\mg\bs\mg_{W}}(F;t)\right|= \left|\left\{ (F,G)\in\mf\times \mg: \left|F\cap G\right|<t\right\}\right|=s\left|\mg\bs\mg_{W}\right|,
	\end{equation}
	which implies \begin{equation}\label{2506101}
		\left|\mg\bs\mg_{W}\right|\leq s+2.
	\end{equation} 
	
	By (\ref{2503182}), we have 
	$ (s+2)\left(\left|\mg_{W}\right|+\left|\mg\bs \mg_{W}\right| \right)=\left|\mf\right|\left|\mg\right|\geq g_{3}(n,t,s)$.
	This together with (\ref{2505226}) and (\ref{2506101}) yields $\mg_{W}=\mh_{1}([n],W;t+1)$ and $\mg\bs \mg_{W}$ is an $(s+2)$-subset of $\mm_{2}(Z,W;t)$. 	
	For each $z\in Z\bs W$, by (\ref{2504271}), we know $\left|\md_{\mg\bs\mg_{W}}(W\cup \left\{z\right\};t)\right|=s$, i.e., exactly two members in $\mg\bs\mg_{W}$ contain $z$.

	In summary, we know that $\mf$ and $\mg$ are  given in either Example \ref{2503261} or Example \ref{2503262}.  It follows from Lemma \ref{2503174} (iii) that (i) holds if $t\geq s+2$, and (ii) holds if $t\leq s+1$. 
\end{proof}

\section{Concluding remarks}\label{2505231}

Let $\mf, \mg\subseteq \binom{[n]}{k}$ be $s$-almost cross-$t$-intersecting families with $\left|\bigcap_{H\in \mf\cup \mg}H\right|<t$. As  mentioned before,  it is natural to ask:   what is  the structure of  $\mf$ and $\mg$ when  $\left|\mf\right|\left|\mg\right|$ is maximum?  We begin this section by presenting  an example   from \cite{2410172}, and then answer the above problem.
\begin{ex}\label{2504162}
	Let $n$, $k$, $t$ be positive integers with $n\geq k\geq t+1$, and $Y\in\binom{[n]}{t+1}$. The families 
	$$\mh_{1}([n],Y;k)\ \ \textnormal{and}\ \ \mm_{1}(Y;k,t)$$ 
	are  cross-$t$-intersecting, and the size of the intersection of all members in  $\mh_{1}([n],Y;k)\cup \mm_{1}(Y;k,t)$ is less than $t$.  
\end{ex}
The product of sizes of families in Example \ref{2504162} is 
\begin{equation}\label{2505261}
(t+1)\binom{n-t-1}{k-t-1}\binom{n-t-1}{k-t}+\binom{n-t-1}{k-t-1}^{2}=:g_{4}(n,k,t).
\end{equation}

\begin{cor}\label{5}
	Let $n$, $k$, $t$ and $s$ be positive integers with $k\geq t+2$ and $n\geq \max\left\{k-t,t+1\right\}\cdot (t+1) (2(k-t+1)^{2}+7s)$. Suppose that $\mf,\mg\subseteq \binom{[n]}{k}$ are $s$-almost cross-$t$-intersecting families with $\left| \bigcap_{H\in\mf\cup\mg}H\right|<t$ and  $\left|\mf\right|\leq  \left|\mg\right|$. Assume that $\left|\mf\right|\left|\mg\right|$ is maximum. 
	\begin{enumerate}[\normalfont(i)]
		\item  If $k\geq 2t+1$ or $(k,t)=(4,2)$, then     $\mf$ and $\mg$ are as in Example \ref{2503041}.
		\item  If $k\leq 2t$ and $(k,t)\neq (4,2)$,  then
		$\mf$ and $\mg$ are as in Example \ref{2504162}.
	\end{enumerate}
\end{cor}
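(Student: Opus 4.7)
My plan is to deduce Corollary~\ref{5} by combining the three characterizations already available: Theorem~\ref{1}, Theorem~\ref{2}, and \cite[Theorem 1.2]{2410172}, with the actual work being a numerical comparison of two candidate extremal products.

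First I would observe that Theorem~\ref{1} shows the unique maximizer of $|\mf||\mg|$ among all $s$-almost cross-$t$-intersecting pairs (without the extra constraint) is $\mf = \mg = \mh_1([n],W;k)$ for some $W \in \binom{[n]}{t}$, which forces $\left|\bigcap_{H \in \mf \cup \mg} H\right| = t$. So our hypothesis strictly excludes this global extremum, and any pair maximizing $|\mf||\mg|$ under the corollary's constraint must fall into one of two sub-cases: (a) $\mf$ and $\mg$ are cross-$t$-intersecting, or (b) $\mf$ and $\mg$ are $s$-almost cross-$t$-intersecting but not cross-$t$-intersecting. In sub-case (a), \cite[Theorem 1.2]{2410172} identifies the extremal pair as in Example~\ref{2504162} with product $g_4(n,k,t)$; in sub-case (b), Theorem~\ref{2} (together with $|\mf|\le|\mg|$) identifies the extremal pair as in Example~\ref{2503041} with product $g_1(n,k,t,s)$. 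Hence the constrained maximum equals $\max\{g_1(n,k,t,s), g_4(n,k,t)\}$, and the extremal structure is determined by whichever side of the comparison wins.

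The remaining, and main, technical step is to compare $g_1$ and $g_4$ under the stated lower bound on $n$. The leading asymptotics suggest a dichotomy at $k=2t$: using the telescoping identity
\[
\binom{n-t}{k-t}-\binom{n-k-1}{k-t}=\sum_{i=1}^{k-t+1}\binom{n-t-i}{k-t-1}\sim (k-t+1)\binom{n-t-1}{k-t-1},
\]
one finds that the leading behaviour of $g_1$ is $(k-t+1)\binom{n-t-1}{k-t-1}\binom{n-t}{k-t}$, while the leading behaviour of $g_4$ is $(t+1)\binom{n-t-1}{k-t-1}\binom{n-t-1}{k-t}$. After cancelling the common factor and using $\binom{n-t}{k-t}\approx\binom{n-t-1}{k-t}$ for $n$ large, the comparison reduces to $k-t+1$ versus $t+1$, giving $g_1>g_4$ when $k\ge 2t+1$ and $g_4>g_1$ when $k\le 2t-1$. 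The borderline $k=2t$ requires handling lower-order corrections, which I expect is where $(k,t)=(4,2)$ becomes exceptional: the sub-leading terms should tip the balance to $g_1>g_4$ precisely in this small case, while for $k=2t$ with $t\ge 3$ the balance remains $g_4>g_1$. These borderline estimates are the real obstacle, and I would consign them to Section~\ref{2504132}, where the assumed lower bound $n\ge \max\{k-t,t+1\}(t+1)(2(k-t+1)^2+7s)$ is calibrated precisely to kill the error terms on both sides.

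Once the winner of $\max\{g_1,g_4\}$ is identified in each parameter regime, re-invoking Theorem~\ref{2} or \cite[Theorem 1.2]{2410172} respectively pins down the extremal structure claimed in (i) or (ii), and Corollary~\ref{5} follows.
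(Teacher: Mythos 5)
Your overall skeleton (split the constrained problem into the cross-$t$-intersecting and the not cross-$t$-intersecting cases, invoke \cite[Theorem 1.2]{2410172} for the former and Theorem \ref{2} for the latter, then compare the two candidate products) is the same as the paper's, but there is a genuine gap in how you benchmark the cross-$t$-intersecting case. You assert that for cross-$t$-intersecting pairs with $\left|\bigcap_{H\in\mf\cup\mg}H\right|<t$ the maximum product is always $g_{4}(n,k,t)$, attained by Example \ref{2504162}. That is only what \cite[Theorem 1.2]{2410172} gives when $k\le 2t$ and $(k,t)\neq(4,2)$. When $k\ge 2t+1$ or $(k,t)=(4,2)$, the non-trivial cross-$t$-intersecting maximum is the Hilton--Milner-type value $\left(\binom{n-t}{k-t}-\binom{n-k-1}{k-t}\right)\left(\binom{n-t}{k-t}+t\right)$, which in that regime is at least $g_{4}(n,k,t)$ (Example \ref{2504162} is itself a feasible non-trivial cross-$t$-intersecting pair). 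Consequently your reduction ``constrained maximum $=\max\{g_{1},g_{4}\}$'' is unjustified in regime (i), and your main technical step---comparing $g_{1}$ with $g_{4}$---does not rule out the possibility that a Hilton--Milner-type cross-$t$-intersecting pair beats every non-cross pair there. What is actually needed in regime (i) is the inequality $g_{1}(n,k,t,s)>\left(\binom{n-t}{k-t}-\binom{n-k-1}{k-t}\right)\left(\binom{n-t}{k-t}+t\right)$, which is precisely Lemma \ref{2504143}(i) of the paper (and is not a triviality when $s<t$, since then the second factor of $g_{1}$ is only $\binom{n-t}{k-t}+s$, smaller than $\binom{n-t}{k-t}+t$); your proposal never identifies this comparison. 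Relatedly, the exceptional pair $(k,t)=(4,2)$ enters through the dichotomy built into \cite[Theorem 1.2]{2410172}, not, as you surmise, merely through a sign change of $g_{1}-g_{4}$ at the borderline $k=2t$.

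In regime (ii) your plan does coincide with the paper's: show $g_{4}>g_{1}$ (Lemma \ref{2504143}(ii)), conclude via Theorem \ref{2} that the extremal pair must be cross-$t$-intersecting, and then apply \cite[Theorem 1.2]{2410172} together with $\left|\mf\right|\le\left|\mg\right|$ to land in Example \ref{2504162}. But even here the quantitative half of your argument is only sketched: the case $k\le 2t-1$ needs the telescoping bound plus control of the extra term $\binom{n-t-1}{k-t-1}^{2}$ in $g_{4}$, and the borderline $k=2t$ with $t\ge 3$ is delicate (the paper splits it further into $t=3$ and $t\ge 4$); these are left at the level of ``I expect'', so the comparison lemma your argument hinges on is not actually established.
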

\begin{proof}
	Since $\left|\mf\right|\left|\mg\right|$ takes the maximum value, by Examples \ref{2503041} and \ref{2504162}, we have
	\begin{equation}\label{2504163}
		\left|\mf\right|\left|\mg\right|\geq \max\left\{ g_{1}(n,k,t,s), g_{4}(n,k,t)\right\}.
	\end{equation}
	Next we divide our proof into the following two cases.

	\medskip
	\noindent \textbf{Case 1.} $k\geq 2t+1$ or $(k,t)=(4,2)$. 
	\medskip	
	
	By (\ref{2504163})  and Lemma \ref{2504143}	 (i),  we have
	$$\left|\mf\right|\left|\mg\right|\geq g_{1}(n,k,t,s) > \left( \binom{n-t}{k-t}-\binom{n-k-1}{k-t}\right)\left(\binom{n-t}{k-t}+t\right).$$
	From  \cite[Theorem 1.2]{2410172}, we know that $\mf$ and $\mg$ are not cross-$t$-intersecting. Indeed, $\mf$ and $\mg$ satisfy the assumption of Theorem \ref{2}. Then (i) holds.

	\medskip
	\noindent \textbf{Case 2.} $k\leq 2t$ and $(k,t)\neq (4,2)$. 
	\medskip

	By (\ref{2504163}) and Lemma \ref{2504143} (ii), we have
	$\left|\mf\right|\left|\mg\right|\geq g_{4}(n,k,t)> g_{1}(n,k,t,s) $. It follows from Theorem \ref{2} that $\mf$ and $\mg$ are cross-$t$-intersecting. 
	
	Since $\left|\mf\right|\left|\mg\right|$ takes the maximum value, by 	\cite[Theorem 1.2]{2410172},  there exists $Y\in\binom{[n]}{t+1}$ such that $\mf=\mh_{1}([n],Y;k)$ and $\mg=\mm_{1}(Y;k,t)$, or $\mf=\mm_{1}(Y;k,t)$ and $\mg=\mh_{1}([n],Y;k)$. This together with $\left|\mf\right|\leq \left|\mg\right|$ yields $\mf=\mh_{1}([n],Y;k)$ and $\mg=\mm_{1}(Y;k,t)$. Then (ii) holds.
\end{proof}

\begin{cor}\label{6}
		Let $n$, $t$ and $s$ be positive integers with $n\geq 5s(t+1)^{2}$. Suppose that $\mf, \mg \subseteq \binom{[n]}{t+1}$ are $s$-almost cross-$t$-intersecting families with $\left| \bigcap_{H\in\mf\cup\mg}H\right|<t$ and $\left|\mf\right|\leq \left|\mg\right|$.  Assume that  $\left|\mf\right|\left|\mg\right|$ is maximum.
		\begin{enumerate}[\normalfont(i)]
			\item If $t\geq s+2$, then $\mf$ and $\mg$ are as in Example \ref{2503261}.
			\item If $t\leq s+1$, then $\mf$ and $\mg$ are as in Example \ref{2503262}.
		\end{enumerate}
\end{cor}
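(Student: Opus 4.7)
The plan is to reduce Corollary \ref{6} to Theorem \ref{3} via \cite[Theorem 1.2]{2410172}, in exact parallel with the proof of Corollary \ref{5}. First I would record the lower bound on $|\mf||\mg|$ supplied by the relevant examples. The families in Example \ref{2503261}, in Example \ref{2503262}, and in Example \ref{2504162} with $k=t+1$ are each $s$-almost cross-$t$-intersecting and each satisfy $\left|\bigcap_{H\in\mf\cup\mg}H\right|<t$ (in Example \ref{2503261} this is witnessed by any $C\in\mc$, in Example \ref{2503262} by any $D\in\md$, and in Example \ref{2504162} because for every $y\in Y$ there is a member of $\mm_1(Y;t+1,t)$ missing $y$). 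Hence the maximality of $|\mf||\mg|$ yields
$$|\mf||\mg|\ \ge\ \max\bigl\{g_2(n,t,s),\ g_3(n,t,s),\ g_4(n,t+1,t)\bigr\}.$$

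Next I would rule out the possibility that $\mf$ and $\mg$ are cross-$t$-intersecting. Writing out \eqref{2505255}, \eqref{2505262}, \eqref{2505261} and specialising the last to $k=t+1$ gives
$$g_2(n,t,s)-g_4(n,t+1,t)=s\ge 1,\qquad g_3(n,t,s)-g_4(n,t+1,t)=(s+1-t)(n-t)+(s+2)^2+t,$$
and the second expression is positive whenever $t\le s+1$. Suppose for contradiction that $\mf$ and $\mg$ are cross-$t$-intersecting. Then \cite[Theorem 1.2]{2410172}, together with $\left|\bigcap_{H\in\mf\cup\mg}H\right|<t$ and $|\mf|\le|\mg|$, forces $\mf=\{Y\}$ and $\mg=\mm_1(Y;t+1,t)$ for some $Y\in\binom{[n]}{t+1}$, yielding $|\mf||\mg|=g_4(n,t+1,t)$; this contradicts the lower bound $|\mf||\mg|\ge g_2$ in case (i) and $|\mf||\mg|\ge g_3$ in case (ii).

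Therefore $\mf$ and $\mg$ are not cross-$t$-intersecting. The observation that the existence of a single pair $(F,G)\in\mf\times\mg$ with $|F\cap G|<t$ already forces $\left|\bigcap_{H\in\mf\cup\mg}H\right|<t$ means that the class of $s$-almost cross-$t$-intersecting but not cross-$t$-intersecting pairs is contained in the class of Corollary \ref{6}. Since the extremal examples in Theorem \ref{3}---Example \ref{2503261} in case (i) and Example \ref{2503262} in case (ii)---lie in the smaller class, the maximum of $|\mf||\mg|$ over Corollary \ref{6}'s class coincides with that over Theorem \ref{3}'s class. As our $\mf,\mg$ lie in the smaller class and realise this common maximum, Theorem \ref{3} applies directly and delivers the asserted structures.

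The only non-citation work is the comparison of $g_4(n,t+1,t)$ with $g_2$ and $g_3$, which reduces to the elementary arithmetic displayed above; all the structural heavy lifting is already done by Theorem \ref{3} and \cite[Theorem 1.2]{2410172}, so no genuine obstacle is anticipated.
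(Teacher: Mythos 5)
Your overall route is the same as the paper's: bound $\left|\mf\right|\left|\mg\right|$ from below by the examples, use \cite[Theorem 1.2]{2410172} to rule out the cross-$t$-intersecting case, and then transfer maximality to the class of Theorem \ref{3} (your containment argument for that last transfer is correct and is exactly what the paper does). The gap is in how you invoke \cite[Theorem 1.2]{2410172}. You assert that if $\mf,\mg$ were cross-$t$-intersecting (hence maximal among nontrivial cross-$t$-intersecting pairs), the cited theorem would \emph{force} $\mf=\{Y\}$, $\mg=\mm_{1}(Y;t+1,t)$, so that $\left|\mf\right|\left|\mg\right|=g_{4}(n,t+1,t)=(t+1)(n-t)-t$. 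But for $k=t+1$ that theorem does not single out this structure: as the paper's own proof of Corollary \ref{6} (and the quantity $\binom{n-t}{k-t}-\binom{n-k-1}{k-t}$ appearing in Corollary \ref{5}) indicates, the bound it supplies here is $\max\{2n,\,(t+1)(n-t)-t\}$, the $2n$ coming from a second extremal configuration (essentially $\left|\mf\right|=2$ with $\mg$ all $(t+1)$-sets $t$-intersecting both members of $\mf$). For $t=1$ one has $2n>(t+1)(n-t)-t=2n-3$, so in that case --- which lies in your case (ii), since $t=1\leq s+1$ always --- the extremal cross-$t$-intersecting pair is \emph{not} $(\{Y\},\mm_{1}(Y;t+1,t))$, and your derived identity $\left|\mf\right|\left|\mg\right|=g_{4}(n,t+1,t)$, hence the contradiction built on it, is unjustified.

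The repair is short and is what the paper does: do not extract the structure from \cite[Theorem 1.2]{2410172}, only the upper bound $\max\{2n,(t+1)(n-t)-t\}$ for nontrivial cross-$t$-intersecting pairs of $(t+1)$-sets, and beat \emph{both} terms. Lemma \ref{2503174}(i) gives $g_{2}(n,t,s)>(t+1)(n-t)-t$ and Lemma \ref{2503174}(ii) gives $g_{3}(n,t,s)>2n$, so the lower bound $\left|\mf\right|\left|\mg\right|\geq\max\{g_{2}(n,t,s),g_{3}(n,t,s)\}$ (your $g_{4}$ term is superfluous) already exceeds $\max\{2n,(t+1)(n-t)-t\}$ in both cases, ruling out cross-$t$-intersecting without any structural claim; the rest of your argument then goes through unchanged.
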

\begin{proof}
By Examples \ref{2503261} and \ref{2503262}, we have $\left|\mf\right|\left|\mg\right|\geq \max\left\{g_{2}(n,t,s), g_{3}(n,t,s)\right\}$. 
It follows from	Lemma \ref{2503174} (i) and (ii) that 
 $\left|\mf\right|\left|\mg\right|>\max\left\{ 2n, (t+1)(n-t)-t\right\}$.
 By \cite[Theorem 1.2]{2410172}, we conclude that $\mf$ and $\mg$ are not cross-$t$-intersecting. Namely, $\mf$ and $\mg$ satisfy the assumption of Theorem \ref{3}. This completes the proof.
\end{proof}

We remark here that the method used in this paper can also be applied to study $s$-almost cross-$t$-intersecting families for other mathematical objects, for example, vector spaces \cite{2506221}.

\section{Inequalities concerning binomial coefficients}\label{2504132}

In this section, we prove some inequalities  concerning binomial coefficients. We first show a lemma through routine computations.

\begin{lem}\label{2410176}
	Let $n$, $k$ and $t$ be positive integers with $k\geq t+1$ and $n\geq (k-t)(k-t+1)+t$. If  $i$ and $j$ are positive integers  with $t\leq i\leq j$, then
	$$(k-t+1)^{j-i}\binom{n-j}{k-j}\leq \binom{n-i}{k-i}.$$
\end{lem}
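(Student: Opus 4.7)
\noindent\textbf{Proof plan for Lemma \ref{2410176}.}
The natural approach is to estimate the one-step ratio and then multiply. Writing
\[
\frac{\binom{n-i}{k-i}}{\binom{n-i-1}{k-i-1}} \;=\; \frac{n-i}{k-i},
\]
the claim reduces to showing that $\tfrac{n-i}{k-i}\ge k-t+1$ for every integer $i$ with $t\le i\le k-1$. Indeed, once this is established, an iterated application for $i,i+1,\ldots,j-1$ gives
\[
\binom{n-i}{k-i}\;\ge\;(k-t+1)^{j-i}\binom{n-j}{k-j},
\]
which is the conclusion. For the degenerate range $j>k$ the right-hand side is zero (by the usual convention on binomial coefficients with negative lower index), and the inequality is vacuous, so we may assume $t\le i\le j\le k$.

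For the pointwise bound, I would first observe that on the interval $i\in[t,k-1]$ the function $i\mapsto\tfrac{n-i}{k-i}$ is non-decreasing whenever $n\ge k$: its derivative (treating $i$ as a real variable) equals $\tfrac{n-k}{(k-i)^{2}}\ge 0$. Hence the minimum over the relevant range is attained at $i=t$. It then suffices to verify
\[
\frac{n-t}{k-t}\;\ge\;k-t+1,
\]
which rearranges to $n\ge (k-t)(k-t+1)+t$, precisely the hypothesis. So equality is achieved at the extremal endpoint $i=t$ and the bound gets better as $i$ grows, which is what makes a clean iteration possible.

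The core of the argument is therefore the single step $\binom{n-i}{k-i}\ge (k-t+1)\binom{n-i-1}{k-i-1}$, and the lemma follows by applying this step $j-i$ times in succession. There is no real obstacle here beyond the arithmetic sanity check that the threshold on $n$ in the hypothesis is exactly tight for $i=t$; since the ratio $\tfrac{n-i}{k-i}$ is monotone in $i$, no finer case analysis is required.
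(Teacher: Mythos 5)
Your proposal is correct and follows essentially the same route as the paper: both reduce to the one-step ratio $\binom{n-i}{k-i}/\binom{n-i-1}{k-i-1}=\frac{n-i}{k-i}\geq\frac{n-t}{k-t}\geq k-t+1$ (using $n\geq(k-t)(k-t+1)+t\geq k$) and then iterate, after dispensing with the trivial cases $i=j$ and $j>k$. No gaps worth noting.
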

\begin{proof}
	If $j>k$ or $i=j$, then the desired result follows.  Now we  assume that $t\leq i<j\leq k$.
	
	Note that $n\geq (k-t)(k-t+1)+t\geq k$.  Then,	for each non-negative integer $\ell\leq j-i-1$, we have 
	\begin{equation*}
		\begin{aligned}
			\frac{\binom{n-i-\ell}{k-i-\ell}}{\binom{n-i-\ell-1}{k-i-\ell-1}}= \frac{n-i-\ell}{k-i-\ell}
			\geq \frac{n-t}{k-t}
			\geq  k-t+1.
		\end{aligned}
	\end{equation*}  
	Therefore, we obtain
	\begin{equation*}
		\frac{\binom{n-i}{k-i}}{\binom{n-j}{k-j}}=\prod_{\ell=0}^{j-i-1}\frac{\binom{n-i-\ell}{k-i-\ell}}{\binom{n-i-\ell-1}{k-i-\ell-1}}\geq (k-t+1)^{j-i}.
	\end{equation*}
	This completes the proof.
\end{proof}

Recall that $f_{1}(n,k,t,s,x)$, $f_{2}(n,k,t,s)$, $f_{3}(n,k,t,s,x)$  are defined in (\ref{2505251}), (\ref{2506122}), (\ref{2505253}) respectively, and $g_{1}(n,k,t,s)$, $g_{2}(n,t,s)$, $g_{3}(n,t,s)$, $g_{4}(n,k,t)$ are defined in (\ref{2505252}), (\ref{2505255}), (\ref{2505262}), (\ref{2505261}) respectively.  The following lemmas show some inequalities concerning those functions.

\begin{lem}\label{2502181}
	Let $n$, $k$, $t$, $s$ and $\ell$ be positive integers with $k\geq t+1$ and $  n\geq \ell(t+1)(2(k-t+1)^{2}+7s)$. Then the following hold.
	\begin{enumerate}[\normalfont(i)]
		\item  $f_{1}(n,k,t,s,x)>f_{1}(n,k,t,s,x+1)$ for any $x\in\left\{t,t+1,\ldots,k-1\right\}$. 
		\item  $\frac{6}{7\ell}f_{1}(n,k,t,s,k-1)>s\binom{k}{t}\binom{2k-2t+2}{k-t+1}$.
	\end{enumerate}
\end{lem}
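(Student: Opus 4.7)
The plan is to isolate the leading term $M(x)=(k-t+1)^{x-t}\binom{x}{t}\binom{n-x}{k-x}$ inside $f_{1}(n,k,t,s,x)$ from the error $E(x)=\binom{x}{t}\sum_{i=0}^{x-t-1}s(k-t+1)^{i}$, so that $f_{1}(n,k,t,s,x)=M(x)+E(x)$, and then in both parts to exploit the fact that, under the hypothesis on $n$, $M$ dominates $E$ by a very comfortable margin.

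For (i), I would start with the ratio
$$\frac{M(x+1)}{M(x)}=(k-t+1)\cdot\frac{x+1}{x+1-t}\cdot\frac{k-x}{n-x}\leq\frac{(t+1)(k-t+1)^{2}}{n-k},$$
using $x\geq t$ and $x\leq k-1$. The hypothesis $n\geq\ell(t+1)(2(k-t+1)^{2}+7s)$ combined with the easy bound $(t+1)(k-t+1)^{2}\geq 2k$ forces this ratio below a constant strictly less than $1$, which yields a linear lower bound for $M(x)-M(x+1)$ in terms of $M(x)\geq (k-t+1)^{x-t}\binom{x}{t}(n-k)$. A direct expansion of $E(x+1)-E(x)$ using $\binom{x+1}{t}=\binom{x}{t}+\binom{x}{t-1}$ produces an upper bound on the order of $s(k-t+1)^{x-t}\binom{x+1}{t}$, and comparing the two bounds reduces (i) to the inequality $n-k\geq Cs(t+1)$ for an explicit small constant $C$, which is immediate from the hypothesis.

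For (ii), I would substitute $x=k-1$ and use $\binom{n-k+1}{1}=n-k+1$ to obtain $f_{1}(n,k,t,s,k-1)\geq\binom{k-1}{t}(k-t+1)^{k-t-1}(n-k+1)$. Using $\binom{k-1}{t}/\binom{k}{t}=(k-t)/k$ and the decomposition $n-k+1\geq 7\ell(t+1)s+\bigl(2\ell(t+1)(k-t+1)^{2}-(k-1)\bigr)$ (both summands nonnegative), the target inequality reduces, after cancelling $7\ell s$ on the two sides, to the purely combinatorial statement
$$6(t+1)(k-t)(k-t+1)^{k-t-1}\ \geq\ k\binom{2k-2t+2}{k-t+1}.\qquad(\ast)$$

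The main obstacle is $(\ast)$. A short calculation shows that $(\ast)$ holds with equality at $k=t+1$ (both sides equal $6(t+1)$) and is strict for $k\geq t+2$; the latter case follows from a ratio test in $r=k-t$, since the left side scales like $r(r+1)^{r-1}$ while $\binom{2r+2}{r+1}$ grows only like $4^{r+1}/\sqrt{r+1}$, with the crossover already visible at $r=2$. Strictness of (ii) at the boundary $k=t+1$ is then recovered from the strictly positive remainder $2\ell(t+1)\cdot 4-(k-1)=8\ell(t+1)-t>0$, which supplies the slack that $(\ast)$ alone does not provide in that single borderline case.
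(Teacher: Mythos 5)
Your proposal is correct, and its skeleton matches the paper's: both proofs bound $f_{1}$ from below by its leading term $(k-t+1)^{x-t}\binom{x}{t}\binom{n-x}{k-x}$ and then exploit the two consequences $n-t\geq 2\ell(t+1)(k-t+1)^{2}$ and $n-k+1\geq 7\ell s(t+1)$ of the hypothesis. For (i) the paper runs a single ratio estimate $f_{1}(n,k,t,s,x+1)/f_{1}(n,k,t,s,x)<1$, while you compare the decrease $M(x)-M(x+1)$ of the main term with the increase $E(x+1)-E(x)$ of the error term; this is the same computation in different bookkeeping. The genuine divergence is in (ii): the paper treats $k=t+1$ by direct computation and, for $k\geq t+2$, shows $s\binom{k}{t}\binom{2k-2t+2}{k-t+1}/f_{1}(n,k,t,s,k-1)\leq 5s(t+1)/(n-k+1)\leq 5/(7\ell)<6/(7\ell)$ via the estimate $\binom{2k-2t}{k-t-1}\leq\bigl(\tfrac{k-t+3}{2}\bigr)^{k-t-2}(k-t+2)$, whereas you strip out $n$ entirely, reduce to the purely combinatorial inequality $(\ast)$, identify its equality case $k=t+1$, and recover strictness there from the positive remainder $2\ell(t+1)(k-t+1)^{2}-(k-1)>0$ in the decomposition of $n-k+1$. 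Your route is arguably more transparent, since it pinpoints exactly where the constant $6/7$ is tight; the paper's buys a uniform numerical margin without any case analysis of $(\ast)$. One detail to make explicit when you write up the ``ratio test'' for $(\ast)$: you cannot simply discard the $t$-dependence by replacing $k$ with $(t+1)(k-t)$, because the resulting $t$-free inequality $6(r+1)^{r-1}\geq\binom{2r+2}{r+1}$ (with $r=k-t$) fails at $r=2$, where $18<20$. At $r=2$ you must keep $(\ast)$ in its exact form, $36(t+1)\geq 20(t+2)$, which is strict precisely because $t\geq 1$; for $r\geq 3$ the $t$-free bound does hold and propagates by the ratio test, since the left side gains a factor at least $r+2\geq 4$ per step while the binomial coefficient gains $(2r+4)(2r+3)/(r+2)^{2}<4$. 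With that small case handled, your argument is complete.
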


\begin{proof}
	(i) For each $x\in\left\{t,t+1,\ldots,k-1\right\}$, we have 
	$f_{1}(n,k,t,s,x)\geq (k-t+1)^{x-t}\binom{x}{t}\binom{n-x}{k-x}$.
	It follows from $n-t\geq 2(t+1)(k-t+1)(k-t)$ and $n-k+1\geq 7s(t+1)$ that
	\begin{equation*}
		\begin{aligned}
			\frac{f_{1}(n,k,t,s,x+1)}{f_{1}(n,k,t,s,x)}\leq\ & \frac{ (k-t+1)^{x-t+1}\binom{x+1}{t}\binom{n-x-1}{k-x-1}+\sum_{i=0}^{x-t}s(k-t+1)^{i}\binom{x+1}{t}    }{(k-t+1)^{x-t}\binom{x}{t}\binom{n-x}{k-x}} \\
			=\ & \frac{(k-t+1)(x+1)(k-x)}{(x-t+1)(n-x)}+\frac{s((k-t+1)^{x-t+1}-1)(x+1)}{(k-t)(k-t+1)^{x-t}(x-t+1)\binom{n-x}{k-x}}\\
			\leq\ & (k-t+1)\cdot \frac{x+1}{x-t+1}\cdot \frac{k-x}{n-x}+\frac{k-t+1}{k-t}\cdot\frac{x+1}{x-t+1}\cdot\frac{s}{\binom{n-x}{k-x}}\\
			\leq\ & \frac{(t+1)(k-t+1)(k-t)}{n-t}+\left(1+\frac{1}{k-t}\right)\frac{s(t+1)}{n-k+1}<1.
		\end{aligned}
	\end{equation*}
	Then the required result follows.

	(ii) If $k=t+1$, then by $n-t>7s\ell (t+1)$, we have
	$$\frac{6}{7\ell}f_{1}(n,k,t,s,k-1)=\frac{6(n-t)}{7\ell}> 6s (t+1)= s\binom{k}{t}\binom{2k-2t+2}{k-t+1},$$
	as desired. Next assume $k\geq t+2$. 
	
	Since $f_{1}(n,k,t,s, k-1)\geq (k-t+1)^{k-t-1}\binom{k-1}{t}(n-k+1)$, we have 
	\begin{equation*}
		\begin{aligned}
			\frac{s\binom{k}{t}\binom{2k-2t+2}{k-t+1}}{f_{1}(n,k,t,s, k-1)}&\leq \frac{s\binom{k}{t}\binom{2k-2t+2}{k-t+1}}{(k-t+1)^{k-t-1}\binom{k-1}{t}(n-k+1)}=\frac{2s\left(1+\frac{t}{k-t}\right)\left(2+\frac{1}{k-t}\right)\binom{2k-2t}{k-t-1}}{(k-t+1)^{k-t-1}(n-k+1)}\\
			&\leq \frac{2s\left(1+\frac{t}{k-t}\right)\left(2+\frac{1}{k-t}\right)\left(\frac{k-t+3}{2}\right)^{k-t-2}(k-t+2)}{(k-t+1)^{k-t-1}(n-k+1)}\\
			&= \frac{2s\left(1+\frac{t}{k-t}\right)\left(2+\frac{1}{k-t}\right)\left(\frac{1}{2}+\frac{1}{k-t+1}\right)^{k-t-2}\left(1+\frac{1}{k-t+1}\right)}{n-k+1}\\
			&\leq \frac{20s\left(1+\frac{t}{2}\right)}{3(n-k+1)}\leq \frac{5s(t+1)}{n-k+1}.
		\end{aligned}
	\end{equation*}
	This together with $n-k+1\geq 7s\ell (t+1)$ yields the desired result. 
\end{proof}

\begin{lem}\label{2503174}
	Let $n$, $t$ and $s$ be positive integers with $n\geq 5s(t+1)^{2}$. Then the following hold.
	\begin{enumerate}[\normalfont(i)]
		\item $g_{2}(n,t,s)>(t+1)(n-t)-t$.
		\item $g_{3}(n,t,s)>\max \left\{2n,\left((t+1)(s+2)+s\right)^{2} \right\}$. 
		\item $g_{2}(n,t,s)>g_{3}(n,t,s)$ if $t\geq s+2$; 
		$g_{2}(n,t,s)<g_{3}(n,t,s)$ if $t\leq s+1$.
	\end{enumerate}
\end{lem}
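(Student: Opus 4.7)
The plan is to treat each of the three parts as a direct computation with the explicit closed forms
$$g_2(n,t,s)=(t+1)(n-t)+s-t,\qquad g_3(n,t,s)=(s+2)(n-t)+(s+2)^2,$$
together with the hypothesis $n\ge 5s(t+1)^2$, and rely on the fact that the right-hand sides in each inequality are either trivially controlled or depend only linearly on $n$.

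For part (i), I would simply rewrite
$$g_2(n,t,s)=\bigl((t+1)(n-t)-t\bigr)+s,$$
and conclude from $s\ge 1$. This is immediate and needs no estimate on $n$.

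For part (ii), there are two inequalities to check. The bound $g_3(n,t,s)>2n$ reduces, after moving $2n$ to the left, to $sn-(s+2)t+(s+2)^2>0$, which follows from $n\ge 5s(t+1)^2\ge 5s(t+1)>(s+2)t$. For the bound $g_3(n,t,s)>\bigl((t+1)(s+2)+s\bigr)^{2}$, I would expand the square as $(t+1)^2(s+2)^2+2s(t+1)(s+2)+s^2$, divide through by $s+2>0$, and rearrange to the sufficient condition
$$n>t+(t+1)^{2}(s+2)+2s(t+1).$$
The right-hand side is at most $(t+1)+(3s+2)(t+1)^2$, while $n\ge 5s(t+1)^2$; verifying $5s(t+1)^2>(t+1)+(3s+2)(t+1)^2$ reduces to $(2s-2)(t+1)\ge 1$, which holds for $s\ge 2$, with the remaining case $s=1$ checked separately by a direct substitution (it becomes $5(t+1)^2>1+5(t+1)^2$ after an explicit rearrangement, so one needs to tighten the estimate by retaining the $(s+2)^2$ summand on the left, which I expect will be the only mildly annoying step).

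For part (iii), I would compute
$$g_2(n,t,s)-g_3(n,t,s)=(t-s-1)(n-t)+s-t-(s+2)^{2}.$$
When $t\ge s+2$, the coefficient $t-s-1\ge 1$ makes the first term at least $n-t\ge 5s(t+1)^2-t$, which easily dominates the lower-order negative terms $s-t-(s+2)^2$; this gives $g_2>g_3$. When $t\le s+1$, the coefficient $t-s-1\le 0$ forces the first term to be non-positive, so the entire expression is bounded above by $s-t-(s+2)^{2}\le -(s+2)^{2}+s<0$, giving $g_2<g_3$.

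The main obstacle is nothing conceptual but rather keeping the constants tight enough in part (ii): the inequality $g_3>((t+1)(s+2)+s)^2$ is the tightest of the three, since the right-hand side grows like $(t+1)^2 s^2$ while $g_3$ grows like $sn$, so the hypothesis $n\ge 5s(t+1)^2$ is used essentially at full strength.
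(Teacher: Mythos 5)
Your parts (i) and (iii) are correct and follow essentially the same direct computation as the paper, and the bound $g_3(n,t,s)>2n$ in part (ii) is also fine. The one genuine loose end is the case $s=1$ of the inequality $g_{3}(n,t,s)>\bigl((t+1)(s+2)+s\bigr)^{2}$. Your sufficient condition $n>t+(s+2)(t+1)^{2}+2s(t+1)$ is valid, but your subsequent weakening $2s(t+1)\leq 2s(t+1)^{2}$ throws away a term of size $2st(t+1)$, and the resulting comparison $5s(t+1)^{2}>(t+1)+(3s+2)(t+1)^{2}$ fails for $s=1$ by the amount $t+1$, which grows with $t$. The repair you sketch --- retaining the $(s+2)^{2}$ summand rather than discarding it down to $s^{2}$ --- only buys back $\frac{4s+4}{s+2}\leq 4$, so for $s=1$ it rescues only $t=1$; for $s=1$ and $t\geq 2$ your chain of estimates, as written, still fails even after that tightening. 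So as stated the proposal does not establish (ii) for $s=1$.

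The gap is trivially closed, but by a different tightening than the one you name: simply do not weaken $2s(t+1)$ to $2s(t+1)^{2}$, and check your own sufficient condition at $s=1$ directly, since
\begin{equation*}
5(t+1)^{2}-\bigl(t+3(t+1)^{2}+2(t+1)\bigr)=2t^{2}+t>0 .
\end{equation*}
For comparison, the paper avoids the case split altogether: it bounds $g_{3}\geq 5s(s+2)(t+1)^{2}+(s+2)^{2}-t(s+2)$ and splits $5s(s+2)(t+1)^{2}$ into three pieces, $3s(s+2)(t+1)^{2}\geq (s+2)^{2}(t+1)^{2}$ (using $3s\geq s+2$), $s(s+2)(t+1)^{2}\geq 2s(s+2)(t+1)$, and $s(s+2)(t+1)^{2}+(s+2)^{2}-t(s+2)>s^{2}$, which dominates the three terms of the expanded square uniformly in $s\geq 1$. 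Either route is elementary; yours just needs the $s=1$ verification made explicit.
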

\begin{proof}
	
	(i)  The desired result follows from $g_{2}(n,t,s)=(t+1)(n-t)+s-t$.
	
	(ii) Since $n\geq 5s(t+1)^{2}$, we have 
	$$g_{3}(n,t,s)-2n=sn-ts-2t+(s+2)^{2}\geq 5s^{2}(t+1)^{2}-ts-2t+(s+2)^{2}>0.$$
	We also conclude that
	\begin{equation*}
		\begin{aligned}
			g_{3}(n,t,s)&\geq 5s(s+2)(t+1)^{2}+(s+2)^{2}-t(s+2)\\
			&= 3s(s+2)(t+1)^{2}+s(s+2)(t+1)^{2}+(s+2)^{2}+s(s+2)(t+1)^{2}-t(s+2)\\
			&>(s+2)^{2}(t+1)^{2}+2s(s+2)(t+1)+s^{2}=\left((t+1)(s+2)+s\right)^{2}.
		\end{aligned}
	\end{equation*}
	Then the desired result holds.
	
	(iii) It is routine to check 
	$$g_{3}(n,t,s)-g_{2}(n,t,s)=(s-t+1)n+t^{2}-st+(s+2)^{2}-s.$$
	If $t\geq s+2$, then by $n\geq 5s(t+1)^{2}$, we have
	$$g_{3}(n,t,s)-g_{2}(n,t,s)\leq -5s(t+1)^{2}+2t^{2}<0.$$
	If $t\leq s+1$, then $$g_{3}(n,t,s)-g_{2}(n,t,s)\geq t^{2}-st+(s+2)^{2}-s\geq 2t(s+2)-st-s>0.$$ This finishes the proof.
\end{proof}

In the following, we always assume that $n$, $k$, $t$ and $s$ are positive integers with $k\geq t+2$ and $n\geq (t+1)^{2}(2(k-t+1)^{2}+7s)$. Then we have 
\begin{equation}\label{2503273}
	\binom{n-t-1}{k-t-1}\geq n-t-1\geq \max\left\{2(t+1)^{2}(k-t+1)^{2}, 7s(t+1)^{2}\right\}. 
\end{equation}
For $a\in\left\{0,1\right\}$, by	applying Pascal's formula repeatedly, we obtain
\begin{equation}\label{2506123}
	 \binom{n-t}{k-t}-\binom{n-k-a}{k-t}=\sum_{i=0}^{k+a-t-1}\binom{n-t-1-i}{k-t-1}\leq (k+a-t)\binom{n-t-1}{k-t-1}.
\end{equation}

\begin{lem}\label{2503091}
	The following hold.
	\begin{enumerate}[\normalfont(i)]
		\item $g_{1}(n,k,t,s)>f_{1}(n,k,t,s,t+1)^{2}$.
		\item $g_{1}(n,k,t,s)>f_{1}(n,k,t,s,t)f_{1}(n,k,t,s,t+2)$.
     	\item $g_{1}(n,k,t,s)>\frac{6}{7(t+1)}f_{1}(n,k,t,s,t+1)\left(\binom{n-t}{k-t}+s\right)$.
     	\item $g_{1}(n,k,t,s)>f_{2}(n,k,t,s)\left( \binom{n-t}{k-t}+t(k-t)\binom{n-t-1}{k-t-1}+s\right)$.
	\end{enumerate}
\end{lem}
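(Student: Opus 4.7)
The four inequalities all have the same shape: the left-hand side $g_1(n,k,t,s)$ is essentially $\binom{n-t}{k-t}^2$ plus lower-order corrections, while each right-hand side is a product whose dominant contribution is at most of order $\binom{n-t}{k-t}\binom{n-t-1}{k-t-1}$ (or even $\binom{n-t-1}{k-t-1}^2$ in part (i)). Since the hypothesis $n\geq (t+1)^2(2(k-t+1)^2+7s)$ forces the ratio $\binom{n-t}{k-t}/\binom{n-t-1}{k-t-1}=(n-t)/(k-t)$ to exceed every polynomial-in-$k,t,s$ factor that appears, each inequality collapses to a routine size comparison.

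The plan is to first rewrite the left-hand side uniformly. Apply \eqref{2506123} with $a=1$ to get
\[ \binom{n-t}{k-t}-\binom{n-k-1}{k-t}=\sum_{i=0}^{k-t}\binom{n-t-1-i}{k-t-1}\le (k-t+1)\binom{n-t-1}{k-t-1}, \]
so that expanding the defining product of $g_1$ yields
\[ g_1(n,k,t,s)=\binom{n-t}{k-t}^{2}+\bigl(s+\min\{t,s\}\bigr)\binom{n-t}{k-t}+s\cdot\min\{t,s\}-\delta, \]
where $\delta$ is a nonnegative correction bounded above by $(k-t+1)\binom{n-t-1}{k-t-1}\bigl(\binom{n-t}{k-t}+\min\{t,s\}\bigr)$. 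This reduces every comparison to showing that the dominant term $\binom{n-t}{k-t}^2$ on the left outweighs the dominant term on the right by more than $\delta$.

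Next I expand each right-hand side in the same language. For (i), $f_1(n,k,t,s,t+1)=(k-t+1)(t+1)\binom{n-t-1}{k-t-1}+s(t+1)$, and squaring gives a bound of order $(t+1)^2(k-t+1)^2\binom{n-t-1}{k-t-1}^2$ plus smaller; by \eqref{2503273} this is at most $\tfrac{1}{2}\binom{n-t-1}{k-t-1}\binom{n-t}{k-t}$, which is dominated by $\binom{n-t}{k-t}^2$ since $(n-t)/(k-t)\gg 1$. For (ii), $f_1(n,k,t,s,t)=\binom{n-t}{k-t}$ exactly, so dividing through by $\binom{n-t}{k-t}$ leaves the task of showing that $\binom{n-t}{k-t}$ exceeds $f_1(n,k,t,s,t+2)$ plus the correction $\delta/\binom{n-t}{k-t}$; since $f_1(n,k,t,s,t+2)$ has order $(k-t+1)^2(t+1)(t+2)\binom{n-t-2}{k-t-2}$, and $(n-t)(n-t-1)\gg (k-t)(k-t-1)(k-t+1)^2(t+1)(t+2)$, this is straightforward. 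For (iii), divide through by $\binom{n-t}{k-t}+s$ and reduce to $\binom{n-t}{k-t}\gg \tfrac{6}{7}(k-t+1)\binom{n-t-1}{k-t-1}$, which is immediate. For (iv), note $f_2(n,k,t,s)\le 2\binom{n-t-1}{k-t-1}$ by \eqref{2503273} and the second factor is at most $2\binom{n-t}{k-t}$, so the product is of order $\binom{n-t-1}{k-t-1}\binom{n-t}{k-t}$, again smaller than $\binom{n-t}{k-t}^2$ by a factor of $(k-t)/(n-t)$.

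The main obstacle is bookkeeping rather than any conceptual difficulty: each right-hand side has three or four additive corrections, and one must check that the \emph{negative} correction $-\binom{n-k-1}{k-t}$ inside $g_1$ does not swamp the surplus coming from the difference of squares. The cleanest execution is to introduce the shorthand $N=\binom{n-t}{k-t}$, $N'=\binom{n-t-1}{k-t-1}$, record the single inequality $N/N'=(n-t)/(k-t)\ge (t+1)^{2}(2(k-t+1)^{2}+7s)/(k-t)$, and then for each part (i)--(iv) verify by direct substitution that the difference $g_1-(\text{RHS})$, viewed as a polynomial in $N,N'$ with explicit coefficients in $k,t,s$, has a dominant positive term $N^{2}$ that absorbs every negative contribution. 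No new ideas beyond Lemma \ref{2410176}, \eqref{2506123}, and \eqref{2503273} are required.
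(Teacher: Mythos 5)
There is a genuine gap, and it sits at the very first step of your plan: the claim that $g_{1}(n,k,t,s)$ is ``essentially $\binom{n-t}{k-t}^{2}$ plus lower-order corrections.'' Writing $A=\binom{n-t}{k-t}$, $B=\binom{n-k-1}{k-t}$, $N'=\binom{n-t-1}{k-t-1}$, the first factor of $g_{1}$ is $A-B+s$, and it is the \emph{difference} $A-B=\sum_{i=0}^{k-t}\binom{n-t-1-i}{k-t-1}\le (k-t+1)N'$ that is small, while $B$ itself is nearly as large as $A$ (for $n$ in the stated range, $B/A=1-O\bigl((k-t)(k-t+1)/n\bigr)$). Consequently your correction term $\delta=B(A+\min\{t,s\})$ is of order $A^{2}$, not at most $(k-t+1)N'(A+\min\{t,s\})$ as you assert — you have swapped the roles of $B$ and $A-B$. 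The true order of $g_{1}$ is $(k-t+1)N'A$, which is smaller than $A^{2}$ by the factor $\approx (k-t+1)(k-t)/(n-t)$; there is no dominant $A^{2}$ term available to ``absorb every negative contribution.''

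This matters because each right-hand side in (i)--(iv) is \emph{also} of order $(k-t+1)N'A$ (or a constant fraction thereof), so the lemma is not a routine order-of-magnitude comparison but a fight over constants. The paper first proves the sharp lower bound $g_{1}>\frac{15}{16}(k-t+1)N'A$, using the estimate $\binom{n-t-1-i}{k-t-1}\ge\bigl(1-\frac{i(k-t-1)}{n-t-1}\bigr)N'$ from \cite[Proposition 1.6]{2504063}, and then bounds each right-hand side by an explicit fraction of $(k-t+1)N'A$ strictly below $\frac{15}{16}$ (namely $\frac{7225}{14112}$, $\frac{29}{56}$, $\frac{57205}{65856}$, $\frac{69803}{112896}$). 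Your reductions inherit the error: in (iii) you reduce to ``$A\gg\frac{6}{7}(k-t+1)N'$,'' but after dividing $g_{1}$ by $A+s$ what remains on the left is $\approx(k-t+1)N'$, not $A$, so the actual comparison is $\frac{6}{7}(1+o(1))$ versus $\frac{15}{16}$ — a tight margin that your argument never addresses; and in (iv) your crude bounds ($f_{2}\le 2N'$ and second factor $\le 2A$) give $4N'A$, which already exceeds $\frac{15}{16}(k-t+1)N'A$ when $k=t+2$. So the proposal as written proves the wrong inequalities; the decomposition must be redone around $(k-t+1)N'A$ with careful constants, essentially as in the paper.
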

\begin{proof}

	By (\ref{2506123}), we get 	$g_{1}(n,k,t,s)>\sum_{i=0}^{k-t}\binom{n-t-1-i}{k-t-1}\binom{n-t}{k-t}$.
	For each integer $i$ with $0\leq i\leq k-t$, by \cite[Proposition 1.6]{2504063}, we have 	
		$\frac{\binom{n-t-1-i}{k-t-1}}{\binom{n-t-1}{k-t-1}}\geq 1-\frac{i(k-t-1)}{n-t-1}$.
	It follows from (\ref{2503273})  that
	\begin{equation*}
		\begin{aligned}
			\frac{g_{1}(n,k,t,s)}{(k-t+1)\binom{n-t-1}{k-t-1}\binom{n-t}{k-t}}&> \sum_{i=0}^{k-t}\frac{\binom{n-t-1-i}{k-t-1}}{(k-t+1)\binom{n-t-1}{k-t-1}}\geq \sum_{i=0}^{k-t} \left( \frac{1}{k-t+1}-\frac{i(k-t-1)}{(k-t+1)(n-t-1)}\right)\\
			&= 1-\frac{(k-t)(k-t-1)}{2(n-t-1)}\geq 1-\frac{1}{4(t+1)^{2}}\geq \frac{15}{16},
		\end{aligned}
	\end{equation*}
	which implies
	\begin{equation}\label{2506121}
		g_{1}(n,k,t,s)>\frac{15}{16}(k-t+1)\binom{n-t-1}{k-t-1}\binom{n-t}{k-t}.
	\end{equation}

	(i) Applying (\ref{2503273}), we conclude 
	\begin{equation*}
		\begin{aligned}
			&\frac{f_{1}(n,k,t,s,t+1)^{2}}{(k-t+1)\binom{n-t-1}{k-t-1}\binom{n-t}{k-t}}=  \frac{k-t}{n-t}\left((t+1)^{2}(k-t+1)+\frac{2s(t+1)^{2}}{\binom{n-t-1}{k-t-1}} +\frac{s^{2}(t+1)^{2}}{(k-t+1)\binom{n-t-1}{k-t-1}^{2}}\right)\\ 
			\leq &\ \frac{1}{2}+\frac{1}{7(t+1)^{2}(k-t+1)}+\frac{1}{98(t+1)^{4}(k-t+1)^{2}}
			\leq  \frac{7225}{14112}.
		\end{aligned}
	\end{equation*}
   This together with	(\ref{2506121}) yields (i).

	(ii) From (\ref{2503273}), we obtain 
	\begin{equation*}
		\begin{aligned}
			&\frac{f_{1}(n,k,t,s,t)f_{1}(n,k,t,s,t+2)}{(k-t+1)\binom{n-t-1}{k-t-1}\binom{n-t}{k-t}}= \frac{\binom{t+2}{2}(k-t+1)(k-t-1)}{n-t-1}+\frac{s\binom{t+2}{2}}{\binom{n-t-1}{k-t-1}}\left(1+\frac{1}{k-t+1}\right)\\
			\leq &\  \frac{1}{4}\left(1+\frac{1}{t+1}\right)+\frac{1}{14}\left( 1+\frac{1}{k-t+1}\right)\left( 1+\frac{1}{t+1}\right)
			\leq \frac{29}{56}.
		\end{aligned}
	\end{equation*}	
	It follows from	(\ref{2506121}) that (ii) holds.

	(iii) According to (\ref{2503273}), we get
	\begin{equation*}
		\begin{aligned}
			&\frac{	6f_{1}(n,k,t,s,t+1)\left(\binom{n-t}{k-t}+s\right)}{7(t+1)(k-t+1)\binom{n-t-1}{k-t-1}\binom{n-t}{k-t}}=  \frac{6}{7}\left( 1+\frac{s}{(k-t+1)\binom{n-t-1}{k-t-1}}\right)\left(1+\frac{s(k-t)}{(n-t)\binom{n-t-1}{k-t-1}}\right) \\
			\leq&\ \frac{6}{7}\left( 1+\frac{1}{7(t+1)^{2}(k-t+1)}\right)\left( 1+\frac{1}{14(t+1)^{4}(k-t+1)}\right)\leq \frac{57205}{65856}.
		\end{aligned}
	\end{equation*}	
	From	(\ref{2506121}), we know that (iii) holds.

	(iv) According to	(\ref{2503273}), we have	
	  \begin{equation*}
		\begin{aligned}
			&\ \frac{f_{2}(n,k,t,s)\left( \binom{n-t}{k-t}+t(k-t)\binom{n-t-1}{k-t-1}+s\right)}{(k-t+1)\binom{n-t-1}{k-t-1}\binom{n-t}{k-t}}\\
			=& \left( \frac{1}{k-t+1}+\frac{(k-t)(k-t-1)}{n-t-1}+\frac{s}{\binom{n-t-1}{k-t-1}}\right)\left(1+\frac{t(k-t)^{2}}{n-t}+\frac{s(k-t)}{(n-t)\binom{n-t-1}{k-t-1}}\right)\\
			\leq & \left( \frac{1}{k-t+1}+\frac{1}{2(t+1)^{2}}+\frac{1}{7(t+1)^{2}}\right)\left(1+\frac{1}{2(t+1)}+\frac{1}{14(t+1)^{4}(k-t+1)}\right)
			\leq  \frac{69803}{112896}.
		\end{aligned}
	\end{equation*}
	By (\ref{2506121}), we get that  (iv) follows.
\end{proof}

\begin{lem}\label{2503042}
	The following hold.
	\begin{enumerate}[\normalfont(i)]
		\item $f_{3}(n,k,t,s,k)\geq f_{3}(n,k,t,s,x)$ for any $x\in\left\{t+2,t+3,\ldots, k\right\}$.
		\item $f_{3}(n,k,t,s,k)\left( \binom{n-t}{k-t}+t\left(k-t\right)\binom{n-t-2}{k-t-2}+s\right)<g_{1}(n,k,t,s)$.
	\end{enumerate}
\end{lem}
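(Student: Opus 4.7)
For (i), I will show $f_3(n,k,t,s,\cdot)$ is non-decreasing on $\{t+2,\ldots,k\}$. Using Pascal's rule and $(k-x+1)^2-(k-x)^2=2(k-x)+1$, the forward difference computes to
\[
f_3(n,k,t,s,x+1)-f_3(n,k,t,s,x)=\binom{n-x-1}{k-t-1}-(2(k-x)+1)\binom{n-t-2}{k-t-2}.
\]
For $x\le k-1$ one has $\binom{n-x-1}{k-t-1}\ge \binom{n-k}{k-t-1}$ and $2(k-x)+1\le 2(k-t)-3$, so it suffices to check $\binom{n-k}{k-t-1}\ge (2(k-t)-3)\binom{n-t-2}{k-t-2}$. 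Expanding the ratio as a product of $k-t-1$ near-unit factors, this reduces to verifying that $(n-k)/(k-t-1)$ dominates $2(k-t)$, which holds comfortably under the hypothesis $n\ge (t+1)^2(2(k-t+1)^2+7s)$ via \eqref{2503273}.

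For (ii), the strategy rests on the identity
\[
\binom{n-t}{k-t}-\binom{n-k-1}{k-t}=\Bigl(\binom{n-t}{k-t}-\binom{n-k}{k-t}\Bigr)+\binom{n-k-1}{k-t-1},
\]
which, with the shorthands $P=\binom{n-t}{k-t}-\binom{n-k}{k-t}$, $D=\binom{n-t}{k-t}$, $E=\binom{n-t-2}{k-t-2}$ and $Q=\binom{n-k-1}{k-t-1}$, yields
\[
g_1(n,k,t,s)=(P+Q+s)(D+\min\{t,s\}) \quad \text{and} \quad f_3(n,k,t,s,k)\cdot[\text{second factor}]=(P+E+2s)(D+t(k-t)E+s).
\]
Expanding the difference $g_1-f_3\cdot[\text{second factor}]$ isolates a dominant positive contribution $D(Q-E-s)$. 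This is substantial because $Q$ is comparable to $\binom{n-t-1}{k-t-1}$, while $E=\tfrac{k-t-1}{n-t-1}\binom{n-t-1}{k-t-1}$ is smaller by the same ratio factor, and $s\ll Q$ by \eqref{2503273}.

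The main obstacle is controlling the remaining negative cross-terms, notably $Pt(k-t)E$, $ED$, and the mixed $s$-products. I will bound $P\le (k-t)\binom{n-t-1}{k-t-1}$ via \eqref{2506123}, apply the ratio identity $E/\binom{n-t-1}{k-t-1}=(k-t-1)/(n-t-1)$, and invoke \eqref{2503273} to verify that each negative term is smaller than the main positive term $DQ$ by a factor of order $(k-t)^2/(n-t)$ (for the geometric cross-terms) or $s/\binom{n-t-1}{k-t-1}$ (for the $s$-terms), both of which are tiny by the size hypothesis. Summing the contributions then gives $g_1-f_3\cdot[\text{second factor}]>0$, establishing (ii).
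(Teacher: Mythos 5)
Your proposal is correct and takes essentially the same route as the paper: for (i) the identical forward-difference computation reduced via $\binom{n-k}{k-t-1}$ and a near-unit ratio estimate, and for (ii) the same isolation of the dominant term $\binom{n-k-1}{k-t-1}\binom{n-t}{k-t}$ (obtained by Pascal's rule from $g_{1}$) with every remaining cross-term controlled through (\ref{2506123}), (\ref{2503273}) and the identity $\binom{n-t-2}{k-t-2}=\frac{k-t-1}{n-t-1}\binom{n-t-1}{k-t-1}$. The paper only differs cosmetically, grouping the error terms into an auxiliary quantity $h(n,k,t,s)$ and verifying explicit numerical constants, which your term-by-term domination argument reproduces.
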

\begin{proof}
	(i)  If $k=t+2$, then the required result follows. Next we assume $k\geq t+3$.

	By \cite[Proposition 1.6]{2504063}, we have $\binom{n-k-1}{k-t-2}\geq \left(1-\frac{(k-t-1)(k-t-2)}{n-t-2}\right)\binom{n-t-2}{k-t-2}$. It follows from $n-t-2\geq 2(t+1)^{2}(k-t+1)^{2}\geq 4(t+1)(k-t+1)k$ that for each $x\in\left\{t+2,t+3,\ldots, k-1\right\}$,
	\begin{equation*}
		\begin{aligned}
			&f_{3}(n,k,t,s,x+1)-f_{3}(n,k,t,s,x)\\
			=&\  \frac{n-x-1}{k-t-1}\binom{n-x-2}{k-t-2}-(2k-2x+1)\binom{n-t-2}{k-t-2}\\
			\geq&\ \frac{n-k}{k-t-1}\binom{n-k-1}{k-t-2}-(2k-2t-3)\binom{n-t-2}{k-t-2}\\
			\geq& \left(  \frac{n-k}{k-t-1}\left(1-\frac{(k-t-1)(k-t-2)}{n-t-2}\right)-2(k-t+1) \right)\binom{n-t-2}{k-t-2}\\
			\geq& \left( \frac{3(t+1)(k-t+1)k}{k-t-1}\left( 1-\frac{(k-t-1)(k-t-2)}{2(t+1)^{2}(k-t+1)^{2}}\right)-2(k-t+1)     \right)\binom{n-t-2}{k-t-2}\\
			\geq& \left( \frac{21}{8}(t+1)(k-t+1)-2(k-t+1)\right)\binom{n-t-2}{k-t-2}>0.
		\end{aligned}
	\end{equation*}
	Then the desired result holds.
	
	(ii) By (\ref{2506123}), we have $\binom{n-t}{k-t}-\binom{n-k}{k-t}\leq (k-t)\binom{n-t-1}{k-t-1}$. It follows from (\ref{2503273}) that
	\begin{equation}\label{2504065}
		\begin{aligned}
			&\frac{f_{3}(n,k,t,s,k)}{\binom{n-t}{k-t}}\leq \frac{(k-t)^{2}}{n-t}+\frac{k-t}{n-t}\cdot\frac{k-t-1}{n-t-1}+\frac{2s(k-t)}{(n-t)\binom{n-t-1}{k-t-1}}\\
			\leq &\ \frac{1}{2(t+1)^{2}}+\frac{1}{4(t+1)^{4}(k-t+1)^{2}}+\frac{1}{7(t+1)^{4}(k-t+1)}
			\leq \frac{523}{4032}.
		\end{aligned}
	\end{equation}
	
	For convenience, write
	$$h(n,k,t,s)=\binom{n-t-2}{k-t-2}\binom{n-t}{k-t}+2s\binom{n-t}{k-t}+f_{3}(n,k,t,s,k)\left( t(k-t)\binom{n-t-2}{k-t-2}+s\right).$$
	From (\ref{2503273}) and (\ref{2504065}), we obtain
	\begin{equation}\label{2505227}
		\begin{aligned}
			&\frac{h(n,k,t,s)}{\binom{n-t-1}{k-t-1}\binom{n-t}{k-t}}=\frac{k-t-1}{n-t-1}+\frac{2s}{\binom{n-t-1}{k-t-1}}+\frac{f_{3}(n,k,t,s,k)}{\binom{n-t}{k-t}}\left( \frac{t(k-t)(k-t-1)}{n-t-1}+\frac{s}{\binom{n-t-1}{k-t-1}}\right)\\
			\leq&\ \frac{1}{2(t+1)^{2}(k-t+1)}+\frac{2}{7(t+1)^{2}}+\frac{523}{4032}\left( \frac{1}{2(t+1)}+\frac{1}{7(t+1)^{2}}\right)
			\leq \frac{2119}{14112}.
		\end{aligned}
	\end{equation}
	
	By \cite[Proposition 1.6]{2504063}, we have $\binom{n-k-1}{k-t-1}\geq \left(1-\frac{(k-t)(k-t-1)}{n-t-1}\right)\binom{n-t-1}{k-t-1}$. It follows from (\ref{2503273}) that
	\begin{equation*}
		\begin{aligned}
			\frac{g_{1}(n,k,t,s)-\left(\binom{n-t}{k-t}-\binom{n-k}{k-t}\right)\binom{n-t}{k-t}}{\binom{n-t-1}{k-t-1}\binom{n-t}{k-t}}&\geq \frac{\binom{n-k}{k-t}-\binom{n-k-1}{k-t}}{\binom{n-t-1}{k-t-1}}=\frac{\binom{n-k-1}{k-t-1}}{\binom{n-t-1}{k-t-1}}
		\geq 1- \frac{1}{2(t+1)^{2}}
			\geq \frac{7}{8}.
		\end{aligned}
	\end{equation*}
	This combining with  (\ref{2505227}) yields
	\begin{equation*}
		\begin{aligned}
			&\ g_{1}(n,k,t,s)-f_{3}(n,k,t,s,k)\left( \binom{n-t}{k-t}+t\left(k-t\right)\binom{n-t-2}{k-t-2}+s\right)\\
			=&\ g_{1}(n,k,t,s)- \left(\binom{n-t}{k-t}-\binom{n-k}{k-t}\right)\binom{n-t}{k-t}-h(n,k,t,s)\\
			\geq& \left( \frac{7}{8}-\frac{2119}{14112}\right)\binom{n-t-1}{k-t-1}\binom{n-t}{k-t}>0.
		\end{aligned}
	\end{equation*}
	This finishes the proof of (ii).
\end{proof}

\begin{lem}\label{2504143}
		The following hold.
	\begin{enumerate}[\normalfont(i)]
		\item  $g_{1}(n,k,t,s)>\left(\binom{n-t}{k-t}-\binom{n-k-1}{k-t}\right)\left( \binom{n-t}{k-t}+t\right)$.
		\item  If $k\leq 2t$ and $(k,t)\neq (4,2)$, then $g_{1}(n,k,t,s)<g_{4}(n,k,t)$.
	\end{enumerate}
\end{lem}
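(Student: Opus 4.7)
The plan is to expand both sides in the binomial basis $A:=\binom{n-t-1}{k-t-1}$, $B:=\binom{n-t-1}{k-t}$ via Pascal's identity $\binom{n-t}{k-t}=A+B$, together with the telescoping bound $C:=\binom{n-t}{k-t}-\binom{n-k-1}{k-t}\le (k-t+1)A$ supplied by (\ref{2506123}). Both parts then reduce to routine comparisons of explicit polynomials in $A$ and $B$.

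For part (i), a direct computation gives
\[
g_{1}(n,k,t,s)-\left(\binom{n-t}{k-t}-\binom{n-k-1}{k-t}\right)\left(\binom{n-t}{k-t}+t\right)=s\binom{n-t}{k-t}+s\min\{t,s\}-C\bigl(t-\min\{t,s\}\bigr),
\]
which is manifestly positive when $s\ge t$ (the last term vanishes). For $s<t$ the bound $C\le(k-t+1)A$ together with $\binom{n-t}{k-t}\ge\tfrac{n-t}{k-t}A$ reduces the question, after dividing by $A$, to $s(n-t)>t(k-t)(k-t+1)$, which follows immediately from the hypothesis $n-t\ge 2(t+1)^{2}(k-t+1)^{2}$.

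For part (ii), I would reorganise
\[
g_{4}(n,k,t)-g_{1}(n,k,t,s)=\bigl((t+1)A-C-s\bigr)B+A^{2}-(C+s)A-(C+s)\min\{t,s\}
\]
and exploit the gap between the coefficient $t+1$ (on the $g_{4}$ side) and $k-t+1$ (an upper bound for $C/A$). In the generic range $k\le 2t-1$, the coefficient of $B$ is at least $(2t-k)A-s\ge A-s$, and the large size of $B/A=(n-k)/(k-t)$ forces the $B$-term to dominate all of the remaining $O(A^{2})$ contributions. In the boundary case $k=2t$ these leading coefficients agree, so positivity must be read off the sub-leading discrepancy $\delta:=(k-t+1)A-C=\sum_{j=0}^{t-1}(t-j)\binom{n-t-2-j}{t-2}$; this collapses to the constant $3$ precisely when $(k,t)=(4,2)$, which is exactly the excluded case, while for $t\ge 3$ one has $\delta\ge t\binom{n-t-2}{t-2}$, so $\delta$ grows polynomially in $n$ and $(\delta-s)B$ comfortably absorbs the negative remainder.

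The main obstacle will be this borderline case $k=2t$ with $t\ge 3$: because the $AB$-coefficients of $g_{1}$ and $g_{4}$ match, positivity cannot be extracted from the dominant $AB$ term and one is forced to track $\delta$ carefully. The remaining bookkeeping is of exactly the same flavour as the binomial-ratio estimates already carried out in Lemmas \ref{2503091} and \ref{2503042}, and the hypothesis on $n$ is easily sufficient to make every step succeed.
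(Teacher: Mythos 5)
Your part (i) is correct and is essentially the paper's own computation: the identity you write down, the bound $C:=\binom{n-t}{k-t}-\binom{n-k-1}{k-t}\le (k-t+1)A$ from (\ref{2506123}), and the reduction to $s(n-t)>t(k-t)(k-t+1)$ all match the argument in the paper. Your algebraic identity for part (ii), $g_{4}-g_{1}=\bigl((t+1)A-C-s\bigr)B+A^{2}-(C+s)A-(C+s)\min\{t,s\}$ with $A=\binom{n-t-1}{k-t-1}$, $B=\binom{n-t-1}{k-t}$, is also correct, your formula $\delta=(t+1)A-C=\sum_{j=0}^{t-1}(t-j)\binom{n-t-2-j}{t-2}$ for $k=2t$ is right, and the case $k\le 2t-1$ goes through (it is the paper's Case 1 in a different arrangement).

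The gap is exactly at the boundary case $k=2t$, $t\ge 3$, which you flag as the main obstacle: the lower bound $\delta\ge t\binom{n-t-2}{t-2}$ you propose is quantitatively too weak to "absorb the negative remainder". Since $C=(t+1)A-\delta$ and $\delta$ is of order $A\cdot t^{3}/n$, the remainder $(C+s)A+(C+s)\min\{t,s\}-A^{2}$ is $tA^{2}$ up to lower-order terms; on the other hand $t\binom{n-t-2}{t-2}\,B=\frac{(t-1)(n-2t)}{n-t-1}A^{2}<(t-1)A^{2}$, so with only the first term of the sum your guaranteed positive contribution $(\delta-s)B$ falls short of the remainder by roughly $A^{2}$ for every $t\ge 3$ — the claimed absorption simply fails. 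What your decomposition really requires is $\delta\binom{n-t}{t}\gtrsim tA^{2}$, i.e. $\delta\gtrsim\frac{t^{2}}{t-1}\binom{n-t-2}{t-2}$, and $\frac{t^{2}}{t-1}>t$. The repair is to keep essentially the whole sum, $\delta\ge\frac{t(t+1)}{2}\binom{n-2t-1}{t-2}$, which turns the leading comparison into $\frac{(t+1)(t-1)}{2}>t$, valid for all $t\ge 3$ and failing only at $t=2$ (where $\delta=3$ is constant, matching the excluded pair $(k,t)=(4,2)$). With that correction your route is viable and even has an advantage over the paper: the paper's proof instead reduces to inequality (\ref{2504148}), whose leading terms are tight at $t=3$, forcing the exact cubic computation of Case 2.1 and the refined ratio bound (\ref{2504151}) for $t\ge 4$, whereas your netting of $A^{2}$ against $(C+s)A$ leaves genuine room at $t=3$. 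But as written, your treatment of $k=2t$ does not close, so the proposal is not yet a proof.
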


\begin{proof}

(i) By (\ref{2506123}), we get $\binom{n-t}{k-t}-\binom{n-k-1}{k-t}\leq (k-t+1)\binom{n-t-1}{k-t-1}$. It follows from (\ref{2503273})  that
\begin{equation*}
	\begin{aligned}
		&g_{1}(n,k,t,s)-\left(\binom{n-t}{k-t}-\binom{n-k-1}{k-t}\right)\left( \binom{n-t}{k-t}+t\right)\\
		\geq &\ s\binom{n-t}{k-t}- t\left( \binom{n-t}{k-t}-\binom{n-k-1}{k-t}\right)
		\geq   \left( \frac{s(n-t)}{k-t}-t(k-t+1)\right)\binom{n-t-1}{k-t-1}\\
		\geq &\left( \frac{2s(t+1)^{2}(k-t+1)^{2}}{k-t}-t(k-t+1)   \right)\binom{n-t-1}{k-t-1}>0.
	\end{aligned}
\end{equation*}	
Then (i) holds.

(ii) We divide our proof into the following cases.

\medskip
\noindent \textbf{Case 1.} $k<2t$. 
\medskip

According to (\ref{2503273}), we obtain 
\begin{equation}\label{2504146}
\begin{aligned}	    \binom{n-t-1}{k-t-1}\left(\binom{n-t-1}{k-t-1}-st\right)-s^{2}&\geq  7s(t+1)^{2}\left( 7s(t+1)^{2}-st\right)-s^{2}>0,\\
\left( 1-\frac{(t+1)(k-t)}{n-t}   \right)\binom{n-t-1}{k-t-1}-s&\geq 7s(t+1)^{2}\left(1-\frac{1}{2(t+1)(k-t+1)} \right)-s>0.
	\end{aligned}
 \end{equation}

	From  $k\leq 2t-1$ and  (\ref{2506123}), we get $\binom{n-t}{k-t}-\binom{n-k-1}{k-t}\leq t\binom{n-t-1}{k-t-1}$. It follows that 
$$g_{1}(n,k,t,s)\leq t\binom{n-t-1}{k-t-1}\binom{n-t}{k-t}+st\binom{n-t-1}{k-t-1}+s\binom{n-t}{k-t}+s^{2}=:m(n,k,t,s). $$
Therefore, we have
\begin{equation*}
	\begin{aligned}
		&\ g_{4}(n,k,t)-g_{1}(n,k,t,s)\geq g_{4}(n,k,t)-m(n,k,t,s)\\
		=& \left( \left( 1-\frac{(t+1)(k-t)}{n-t}   \right)\binom{n-t-1}{k-t-1}-s\right)\binom{n-t}{k-t}+\binom{n-t-1}{k-t-1}\left(\binom{n-t-1}{k-t-1}-st \right)-s^{2}.
	\end{aligned}
\end{equation*}
This together with (\ref{2504146}) 	 yields the desired result.

\medskip
\noindent \textbf{Case 2.} $k=2t$. 
\medskip

Recall that $k\geq t+2$  and $(k,t)\neq (4,2)$, which implies $t\geq 3$. By (\ref{2506123}), we have $\binom{n-t}{t}-\binom{n-2t-1}{t}\leq (t+1)\binom{n-t-1}{t-1}$. It follows from  (\ref{2503273})  that
\begin{equation*}
	\begin{aligned}
		&\ \frac{g_{1}(n,2t,t,s)-\left( \binom{n-t}{t}-\binom{n-2t-1}{t}\right)\binom{n-t}{t}}{\binom{n-t-1}{t-1}^{2}}
		\leq \frac{s(t+1)\binom{n-t-1}{t-1}+s\binom{n-t}{t}+s^{2}}{\binom{n-t-1}{t-1}^{2}}\\
		=&\  \frac{s(t+1)}{\binom{n-t-1}{t-1}}+	\frac{t-1}{t}\cdot\left(1+\frac{2}{n-t-2} \right)\cdot\frac{s(t-2)}{n-t-1}\cdot\frac{1}{\binom{n-t-3}{t-3}}+\frac{s^{2}}{\binom{n-t-1}{t-1}^{2}}\\		
	  \leq&\ \frac{1}{7(t+1)}+\frac{2}{7(t+1)}+\frac{1}{49(t+1)^{4}}<1.
	\end{aligned}
\end{equation*}
Note that $g_{4}(n,2t,t)=(t+1)\binom{n-t-1}{t-1}\binom{n-t-1}{t}+\binom{n-t-1}{t-1}^{2}$. To prove $g_{1}(n,2t,t,s)<g_{4}(n,2t,t)$, it is sufficient to show 
\begin{equation}\label{2504148}
\left( \binom{n-t}{t}-\binom{n-2t-1}{t}\right)\binom{n-t}{t}<	(t+1)\binom{n-t-1}{t-1}\binom{n-t-1}{t}.
\end{equation}

\medskip
\noindent \textbf{Case 2.1.} $t=3$. 
\medskip

It is routine to check that 
$$4\binom{n-4}{2}\binom{n-4}{3}-\left( \binom{n-3}{3}-\binom{n-7}{3}\right)\binom{n-3}{3}=\frac{(n-9)(n-5)(n-4)}{3}.$$
This together with $n\geq 512+112s$ yields (\ref{2504148}).

\medskip
\noindent \textbf{Case 2.2.} $t\geq 4$. 
\medskip

For each integer $i$ with $0\leq i\leq t$, we claim that 
\begin{equation}\label{2504151}
	\frac{\binom{n-t-1-i}{t-1}}{\binom{n-t-1}{t-1}}\leq \frac{n-2t}{n-t-1+(i-1)(t-1)}. 
\end{equation}
If $i=0$ or $i=1$, then the required result holds. If $2\leq i\leq t$, then by $ba\geq (b+1)(a-1)$ for any $b\geq a$, we have 
$$(n-t-1)(n-t-2)\cdots (n-t-i)\geq \left( n-t-1+(i-1)(t-1)\right)(n-2t-1)\cdots(n-2t-(i-1)). $$
This together with
$$\frac{\binom{n-t-1-i}{t-1}}{\binom{n-t-1}{t-1}}=\frac{(n-2t)(n-2t-1)\cdots(n-2t-(i-1))}{(n-t-1)(n-t-2)\cdots(n-t-i)}$$yields (\ref{2504151}).

From (\ref{2506123}), we obtain
$\binom{n-t}{t}-\binom{n-2t-1}{t}=\sum_{i=0}^{t}\binom{n-t-1-i}{t-1}$. It follows from (\ref{2504151}) that
\begin{equation}\label{2504161}
	\begin{aligned}
		\frac{\left(   \binom{n-t}{t}-\binom{n-2t-1}{t}\right)\binom{n-t}{t}}{\binom{n-t-1}{t-1}\binom{n-t-1}{t}}&=\frac{n-t}{n-2t}\cdot\sum_{i=0}^{t}\frac{\binom{n-t-1-i}{t-1}}{\binom{n-t-1}{t-1}}\leq \sum_{i=0}^{t}\frac{n-t}{n-t-1+(i-1)(t-1)}\\
		&\leq \frac{n-t}{n-2t}+\frac{n-t}{n-t-1}+3\cdot\frac{n-t}{n-2}+t-4\\
		&= \frac{n^{2}(7-2t)+4n(2t^{2}-5t-2)+2t(-3t^{2}+4t+9)}{(n-2)(n-2t)(n-t-1)}    +t+1.
	\end{aligned}
\end{equation}

Since $t\geq 4$ and $n\geq 2(t+1)^{4}$, we obtain
$$n^{2}(7-2t)+4n(2t^{2}-5t-2)\leq -2(t+1)^{4}n+8t^{2}n<0,\ -3t^{2}+4t+9<0.$$
This together with (\ref{2504161}) yields  (\ref{2504148}). Hence  (ii) holds.
\end{proof}

\medskip
\noindent{\bf Acknowledgment.}	
 K. Wang is supported by the National Key R\&D Program of China (No. 2020YFA0712900), National Natural Science Foundation of China (12071039, 12131011) and  Beijing Natural Science Foundation (1252010). T. Yao is supported by the Henan province Natural Science Foundation (252300420899).

\end{document}